\date{\today}
\newtheorem{theorem}{Theorem}[section]
\newtheorem{proposition}[theorem]{Proposition}
\newtheorem{corollary}[theorem]{Corollary}
\newtheorem{lemma}[theorem]{Lemma}
\theoremstyle{definition}
\newtheorem{remark}[theorem]{Remark}
\begin{document}

\title[On the monoid of  cofinite partial isometries of $\mathbb{N}^n$ with the usual metric]{On the monoid of  cofinite partial isometries of $\mathbb{N}^n$ with the usual metric}

\author[O.~Gutik and A.~Savchuk]{Oleg~Gutik and Anatolii~Savchuk}
\address{Faculty of Mathematics, Ivan Franko National
University of Lviv, Universytetska 1, Lviv, 79000, Ukraine}
\email{oleg.gutik@lnu.edu.ua, ovgutik@yahoo.com, asavchuk3333@gmail.com}

\keywords{Partial isometry, inverse semigroup, partial bijection, natural partial order, Green's relations, least group congruence, $F$-inverse semigroup, semidirect product, free semilattice, symmetric group.}

\subjclass[2010]{20M18, 20M20, 20M30}

\begin{abstract}
In this paper we study the structure of the monoid $\mathbf{I}\mathbb{N}_{\infty}^n$ of  cofinite partial isometries of the $n$-th power of the set of positive integers $\mathbb{N}$ with the usual metric for a positive integer $n\geqslant 2$. We describe the elements of the monoid $\mathbf{I}\mathbb{N}_{\infty}^n$ as partial transformation of $\mathbb{N}^n$, the group of units and the subset of idempotents of the semigroup $\mathbf{I}\mathbb{N}_{\infty}^n$, the natural partial order and Green's relations on $\mathbf{I}\mathbb{N}_{\infty}^n$. In particular we show that the quotient semigroup $\mathbf{I}\mathbb{N}_{\infty}^n/\mathfrak{C}_{\textsf{mg}}$, where $\mathfrak{C}_{\textsf{mg}}$ is the minimum group congruence on $\mathbf{I}\mathbb{N}_{\infty}^n$, is isomorphic to the symmetric group $\mathscr{S}_n$ and $\mathscr{D}=\mathscr{J}$ in $\mathbf{I}\mathbb{N}_{\infty}^n$. Also, we prove that for any integer $n\geqslant 2$ the semigroup $\mathbf{I}\mathbb{N}_{\infty}^n$ is isomorphic to the semidirect product  ${\mathscr{S}_n\ltimes_\mathfrak{h}(\mathscr{P}_{\infty}(\mathbb{N}^n),\cup)}$ of the free semilattice with the unit  $(\mathscr{P}_{\infty}(\mathbb{N}^n),\cup)$ by the symmetric group $\mathscr{S}_n$.
\end{abstract}

\maketitle


\section{Introduction and preliminaries}
In this paper we shall follow the terminology of \cite{Clifford-Preston-1961-1967, Lawson-1998}.
We shall denote the  the cardinality of the set $A$ by $|A|$. For any positive integer $n$ by $\mathscr{S}_n$ we denote the group of permutations of the set $\{1,\ldots,n\}$.

A semigroup $S$ is called {\it inverse} if for any
element $x\in S$ there exists a unique $x^{-1}\in S$ such that
$xx^{-1}x=x$ and $x^{-1}xx^{-1}=x^{-1}$. The element $x^{-1}$ is
called the {\it inverse of} $x\in S$. If $S$ is an inverse
semigroup, then the function $\operatorname{inv}\colon S\to S$
which assigns to every element $x$ of $S$ its inverse element
$x^{-1}$ is called an {\it inversion}.

If $S$ is a semigroup, then we shall denote the subset of all
idempotents in $S$ by $E(S)$. If $S$ is an inverse semigroup, then
$E(S)$ is closed under multiplication and we shall refer to $E(S)$ a as
\emph{band} (or the \emph{band of} $S$). Then the semigroup
operation on $S$ determines the following partial order $\preccurlyeq$
on $E(S)$: $e\preccurlyeq f$ if and only if $ef=fe=e$. This order is
called the {\em natural partial order} on $E(S)$. A
\emph{semilattice} is a commutative semigroup of idempotents.

A linearly ordered subset of a poset is called a \emph{chain}. An \emph{$\omega$-chain} is a chain which is order isomorphic to the set of all negative integers with the usual order $\leq$.

If $S$ is an inverse semigroup then the semigroup operation on $S$ determines the following partial order $\preccurlyeq$
on $S$: $s\preccurlyeq t$ if and only if there exists $e\in E(S)$ such that $s=te$. This order is
called the {\em natural partial order} on $S$ \cite{Wagner-1952}.

By
$(\mathscr{P}_{\infty}(X),\cup)$ we shall denote the
\emph{free semilattice with identity} over a set $X$ of cardinality $\geqslant\omega$, i.e.,
$(\mathscr{P}_{\infty}(X),\cup)$ is the set of all finite
subsets (with the empty set) of $X$ with the semilattice
operation ``union''.

A congruence $\mathfrak{C}$ on a semigroup $S$ is called
\emph{non-trivial} if $\mathfrak{C}$ is distinct from universal and
identity congruences on $S$, and a \emph{group congruence} if the quotient
semigroup $S/\mathfrak{C}$ is a group. If $\mathfrak{C}$ is a congruence on a semigroup $S$ then by $\mathfrak{C}^{\sharp}$ we denote the natural homomorphism from $S$ onto the quotient semigroup $S/\mathfrak{C}$. Every inverse semigroup $S$ admits a least (minimum) group
congruence $\mathfrak{C}_{\mathbf{mg}}$:
\begin{equation*}
    a\mathfrak{C}_{\mathbf{mg}} b \; \hbox{ if and only if there exists }\;
    e\in E(S) \; \hbox{ such that }\; ae=be
\end{equation*}
(see \cite[Lemma~III.5.2]{Petrich-1984}).

If $S$ is a semigroup, then we shall denote the Green relations on $S$ by $\mathscr{R}$, $\mathscr{L}$, $\mathscr{J}$, $\mathscr{D}$ and $\mathscr{H}$ (see \cite{Green-1951} or \cite[Section~2.1]{Clifford-Preston-1961-1967}):
\begin{align*}
    &\qquad a\mathscr{R}b \mbox{ if and only if } aS^1=bS^1;\\
    &\qquad a\mathscr{L}b \mbox{ if and only if } S^1a=S^1b;\\
    &\qquad a\mathscr{J}b \mbox{ if and only if } S^1aS^1=S^1bS^1;\\
    &\qquad \mathscr{D}=\mathscr{L}\circ\mathscr{R}=
          \mathscr{R}\circ\mathscr{L};\\
    &\qquad \mathscr{H}=\mathscr{L}\cap\mathscr{R}.
\end{align*}

If $\alpha\colon X\rightharpoonup Y$ is a partial map, then we shall denote
the domain and the range of $\alpha$ by $\operatorname{dom}\alpha$ and $\operatorname{ran}\alpha$, respectively. A partial map $\alpha\colon X\rightharpoonup Y$ is called \emph{cofinite} if both sets $X\setminus\operatorname{dom}\alpha$ and $Y\setminus\operatorname{ran}\alpha$ are finite.

Let $\mathscr{I}_\lambda$ denote the set of all partial one-to-one
transformations of a non-zero  cardinal $\lambda$ together
with the following semigroup operation:
\begin{equation*}
x(\alpha\beta)=(x\alpha)\beta \quad \hbox{if} \quad x\in\operatorname{dom}(\alpha\beta)=\{
y\in\operatorname{dom}\alpha\colon y\alpha\in\operatorname{dom}\beta\}, \qquad  \hbox{for} \quad
\alpha,\beta\in\mathscr{I}_\lambda.
\end{equation*}
 The semigroup
$\mathscr{I}_\lambda$ is called the \emph{symmetric inverse} (\emph{monoid})
\emph{semigroup} over cardinal $\lambda$~(see \cite{Clifford-Preston-1961-1967}). The symmetric inverse
semigroup was introduced by Wagner~\cite{Wagner-1952} and it plays
a major role in the theory of semigroups. By $\mathscr{I}^{\mathrm{cf}}_\lambda$ is denoted a
subsemigroup of injective partial selfmaps of $\lambda$ with
cofinite domains and ranges in $\mathscr{I}_\lambda$. Obviously, $\mathscr{I}^{\mathrm{cf}}_\lambda$ is an inverse
submonoid of the semigroup $\mathscr{I}_\lambda$. The
semigroup $\mathscr{I}^{\mathrm{cf}}_\lambda$  is called the \emph{monoid of
injective partial cofinite selfmaps} of $\lambda$ \cite{Gutik-Repovs-2015}.

\smallskip

A partial transformation $\alpha\colon (X,d)\rightharpoonup (X,d)$ of a metric space $(X,d)$ is called \emph{isometric} or a \emph{partial isometry}, if $d(x\alpha,y\alpha)=d(x,y)$ for all $x,y\in \operatorname{dom}\alpha$. It is obvious that the composition of two partial isometries of a metric space $(X,d)$ is a partial isometry, and the converse partial map to a partial isometry is a partial isometry, too. Hence the set of partial isometries of a metric space $(X,d)$ with the operation the composition of partial isometries is an inverse submonoid of the symmetric inverse monoid over the cardinal $|X|$. Also, it is obvious that the set of partial cofinite isometries of a metric space $(X,d)$ with the operation the composition of partial isometries is an inverse submonoid of the monoid of injective partial cofinite selfmaps of the cardinal $|X|$.

\smallskip

The semigroup $\mathbf{ID}_{\infty}$ of all partial cofinite isometries of the set of integers $\mathbb{Z}$ with the usual metric $d(n,m)=|n-m|$, $n,m\in \mathbb{Z}$ established in the Bezushchak papers \cite{Bezushchak-2004, Bezushchak-2008}. In \cite{Bezushchak-2004}  the generators of the semigroup $\mathbf{ID}_{\infty}$ are described and there proved that $\mathbf{ID}_{\infty}$ has the exponential growth. We remark that the semigroup $\mathbf{ID}_{\infty}$ is inverse submonoid of the  monoid of all partial cofinite bijections of $\mathbb{Z}$, and  elements of $\mathbf{ID}_{\infty}$ are restrictions of isometries of $\mathbb{Z}$ onto its cofinite subsets in the Lawson interpretation (see \cite[p.~9]{Lawson-1998}). Green's relations and principal ideals of $\mathbf{ID}_{\infty}$ are described in \cite{Bezushchak-2008}. In \cite{Gutik-Savchuk-2017} is shown that the quotient semigroup $\mathbf{ID}_{\infty}/\mathfrak{C}_{\mathbf{mg}}$ is isomorphic to the group ${\mathbf{Iso}}(\mathbb{Z})$ of all isometries of $\mathbb{Z}$, the semigroup $\mathbf{ID}_{\infty}$ is $F$-inverse, and $\mathbf{ID}_{\infty}$ is isomorphic to the semidirect product ${\mathbf{Iso}}(\mathbb{Z})\ltimes_\mathfrak{h}\mathscr{P}_{\!\infty}(\mathbb{Z})$ of the free semilattice $(\mathscr{P}_{\!\infty}(\mathbb{Z}),\cup)$ by the group ${\mathbf{Iso}}(\mathbb{Z})$. Also in \cite{Gutik-Savchuk-2017} established semigroup and shift-continuous topologies on $\mathbf{ID}_{\infty}$ and embedding of the discrete semigroup $\mathbf{ID}_{\infty}$ into compact-like topological semigroups.

\smallskip

Let $\mathbf{I}\mathbb{N}_{\infty}$ be the set of all partial cofinite isometries of the set of positive integers $\mathbb{N}$ with the usual metric $d(n,m)=|n-m|$, $n,m\in \mathbb{N}$. Then $\mathbf{I}\mathbb{N}_{\infty}$ with the operation of composition of partial isometries is an inverse submonoid of $\mathscr{I}_\omega$. The semigroup $\mathbf{I}\mathbb{N}_{\infty}$ of all partial co-finite isometries of positive integers is studied in \cite{Gutik-Savchuk-2018}. There we describe the Green relations on the semigroup $\mathbf{I}\mathbb{N}_{\infty}$, its band and proved that $\mathbf{I}\mathbb{N}_{\infty}$ is a simple $E$-unitary $F$-inverse semigroup. Also in \cite{Gutik-Savchuk-2018}, the least group congruence $\mathfrak{C}_{\mathbf{mg}}$ on $\mathbf{I}\mathbb{N}_{\infty}$ is described and proved that the quotient-semigroup  $\mathbf{I}\mathbb{N}_{\infty}/\mathfrak{C}_{\mathbf{mg}}$ is isomorphic to the additive group of integers $\mathbb{Z}(+)$. An example of a non-group congruence on the semigroup $\mathbf{I}\mathbb{N}_{\infty}$ is presented. Also we proved that a congruence on the semigroup $\mathbf{I}\mathbb{N}_{\infty}$ is group if and only if its restriction onto an isomorphic  copy of the bicyclic semigroup in $\mathbf{I}\mathbb{N}_{\infty}$ is a group congruence.

\smallskip

For an arbitrary positive integer $n\geqslant 2$ by $\mathbb{N}^n$ we denote the $n$-th power of the set of positive inters $\mathbb{N}$ with the usual metric:
\begin{equation*}
d((x_1,\cdots,x_n),(y_1,\ldots,y_2))=\sqrt{(x_1-y_1)^2+\cdots+(x_n-y_n)^2}.
\end{equation*}
Let $\mathbf{I}\mathbb{N}_{\infty}^n$ be the set of all partial cofinite isometries of $\mathbb{N}^n$. It is obvious that $\mathbf{I}\mathbb{N}_{\infty}^n$ with the operation of composition of partial isometries is an inverse submonoid of $\mathscr{I}_\omega$ and later by $\mathbf{I}\mathbb{N}_{\infty}^n$ we shall denote the monoid of all partial cofinite isometries of $\mathbb{N}^n$.

\smallskip

By $\mathbb{I}$ we denote the identity map of $\mathbb{N}^n$ which obviously is the unit of the semigroup $\mathbf{I}\mathbb{N}_{\infty}^n$. Later by $H(\mathbb{I})$ we shall denote the group of units  of $\mathbf{I}\mathbb{N}_{\infty}^n$.

\smallskip

In the paper we study the structure of the monoid $\mathbf{I}\mathbb{N}_{\infty}^n$. We describe the elements of the monoid $\mathbf{I}\mathbb{N}_{\infty}^n$ as partial transformations of $\mathbb{N}^n$, the group of units and the subset of idempotents of $\mathbf{I}\mathbb{N}_{\infty}^n$, the natural partial order and Green's relations on $\mathbf{I}\mathbb{N}_{\infty}^n$. In particular we show that the quotient semigroup $\mathbf{I}\mathbb{N}_{\infty}^n/\mathfrak{C}_{\textsf{mg}}$ is isomorphic to the symmetric group $\mathscr{S}_n$ and $\mathscr{D}=\mathscr{J}$ in $\mathbf{I}\mathbb{N}_{\infty}^n$. Also, we prove that for any integer $n\geqslant 2$ the semigroup $\mathbf{I}\mathbb{N}_{\infty}^n$ is isomorphic to the semidirect product ${\mathscr{S}_n\ltimes_\mathfrak{h}(\mathscr{P}_{\infty}(\mathbb{N}^n),\cup)}$ of free semilattice with the unit  $(\mathscr{P}_{\infty}(\mathbb{N}^n),\cup)$ by the symmetric group $\mathscr{S}_n$.

\section{Properties of partial cofinite isometries of $\mathbb{N}^n$}\label{section-2}

The definition of an isometry implies the following proposition.

\begin{proposition}\label{proposition-2.1}
Let $n$ be a positive integer $\geqslant 2$.
Then every permutation $\sigma$ of the set $\{1,\dots,n\}$ induces an isometry $\alpha_\sigma\colon\mathbb N^n\to\mathbb N^n$, $\alpha_\sigma\colon x\mapsto x\circ\alpha$, of the set $\mathbb{N}^n$.
\end{proposition}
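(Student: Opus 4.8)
The plan is to exhibit the map $\alpha_\sigma$ explicitly, check that it is a well-defined bijection of $\mathbb{N}^n$, and then verify that it preserves the Euclidean metric $d$. Given a permutation $\sigma\in\mathscr{S}_n$, define $\alpha_\sigma\colon\mathbb{N}^n\to\mathbb{N}^n$ by permuting coordinates: $(x_1,\dots,x_n)\alpha_\sigma=(x_{1\sigma^{-1}},\dots,x_{n\sigma^{-1}})$, i.e. the coordinate that sits in position $i\sigma$ of the image is $x_i$. (This is exactly the rule $x\mapsto x\circ\sigma$ read as a composition of functions $\{1,\dots,n\}\to\mathbb{N}$, up to the usual conventions; one should state once and for all which convention is in force, since the author writes $x\circ\alpha$.) Since $\sigma$ is a bijection of $\{1,\dots,n\}$, the map $\alpha_\sigma$ has an obvious two-sided inverse, namely $\alpha_{\sigma^{-1}}$, so $\alpha_\sigma$ is a bijection of $\mathbb{N}^n$; in particular $\operatorname{dom}\alpha_\sigma=\operatorname{ran}\alpha_\sigma=\mathbb{N}^n$.

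The heart of the argument is the metric computation. For $x=(x_1,\dots,x_n)$ and $y=(y_1,\dots,y_n)$ in $\mathbb{N}^n$ we have
\begin{equation*}
d(x\alpha_\sigma,y\alpha_\sigma)=\sqrt{\sum_{i=1}^{n}\bigl(x_{i\sigma^{-1}}-y_{i\sigma^{-1}}\bigr)^2}=\sqrt{\sum_{j=1}^{n}\bigl(x_{j}-y_{j}\bigr)^2}=d(x,y),
\end{equation*}
where the middle equality is just the re-indexing $j=i\sigma^{-1}$, which runs over all of $\{1,\dots,n\}$ as $i$ does because $\sigma$ is a permutation. Thus $\alpha_\sigma$ is an isometry of $\mathbb{N}^n$ in the sense defined before the proposition, and since it is total, it is in particular a partial isometry whose complement of domain and range are empty, hence cofinite; so $\alpha_\sigma\in\mathbf{I}\mathbb{N}_{\infty}^n$.

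I do not expect any genuine obstacle here: the statement is essentially the observation that the symmetric group acts on $\mathbb{N}^n$ by coordinate permutations and that such permutations commute with the formula for $d$ because that formula is itself symmetric in the $n$ summands. The only point requiring a little care is bookkeeping of conventions — left versus right action, and whether $x\mapsto x\circ\sigma$ should be $x\mapsto x\circ\sigma^{-1}$ to make $\sigma\mapsto\alpha_\sigma$ a homomorphism rather than an anti-homomorphism — but this is a matter of fixing notation, not of mathematical content, and it will matter later (when the quotient by $\mathfrak{C}_{\mathsf{mg}}$ is identified with $\mathscr{S}_n$) rather than for the present proposition. One might also remark, as a harmless addendum, that the assignment $\sigma\mapsto\alpha_\sigma$ is injective, so that $H(\mathbb{I})$ contains a copy of $\mathscr{S}_n$; but that is strictly more than the proposition asks for and can be deferred.
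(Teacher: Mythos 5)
Your proof is correct and is exactly the routine verification the paper has in mind: the authors state that Proposition~2.1 ``follows from the definition of an isometry'' and omit the argument entirely, and your re-indexing of the sum under the coordinate permutation is the intended justification. Your side remarks about the left/right action convention and the injectivity of $\sigma\mapsto\alpha_\sigma$ are accurate but, as you note, not needed for this statement.
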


We denote $\mathbf{1}=(1,\ldots,1)$ and $\vec{\mathbf{1}}_i=\{(1,\ldots,\underbrace{x_i}_{i\hbox{\footnotesize{-th}}},\ldots,1)\in \mathbb{N}^n\colon x_i\in\mathbb{N}\}$ for any $i\in\{1,\ldots,n\}$. Also for any $\mathbf{x}=(x_1,\ldots,x_n)\in\mathbb{N}^n$ and any positive real number $r$ we put
\begin{equation*}
  \mathcal{M}_r(\mathbf{x})=\left|\left\{\mathbf{y}\in\mathbb{N}^n \colon d(\mathbf{x},\mathbf{y})=r\right\}\right|.
\end{equation*}

\begin{lemma}\label{lemma-2.2}
Let $n$ be a positive integer $\geqslant 2$. Then the following statements holds.
\begin{itemize}
  \item[$(i)$] $\mathcal{M}_1(\mathbf{1})=n$.
  \item[$(ii)$] If for $\mathbf{x}\in \mathbb{N}^n$  exactly $k$ coordinates of $\mathbf{x}$ distinct from $1$ for some $k=1,\ldots,n$, then $\mathcal{M}_1(\mathbf{x})=n+k$.
\end{itemize}
\end{lemma}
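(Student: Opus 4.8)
The plan is to compute $\mathcal{M}_1(\mathbf{x})$ directly by counting the lattice points $\mathbf{y}\in\mathbb{N}^n$ at Euclidean distance exactly $1$ from $\mathbf{x}$. The key observation is that if $d(\mathbf{x},\mathbf{y})=1$ with both points in $\mathbb{Z}^n$, then $\sum_{j=1}^n (x_j-y_j)^2 = 1$, so exactly one coordinate differs and it differs by $\pm 1$; that is, $\mathbf{y}$ is obtained from $\mathbf{x}$ by replacing some coordinate $x_j$ with $x_j\pm 1$. Thus the candidates are in bijection with pairs $(j,\varepsilon)$ where $j\in\{1,\ldots,n\}$ and $\varepsilon\in\{+1,-1\}$, giving at most $2n$ neighbours; the only constraint for such a neighbour to actually lie in $\mathbb{N}^n$ is that the modified coordinate stay $\geqslant 1$, i.e. the decrement $x_j-1$ is forbidden precisely when $x_j=1$.

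Carrying this out: for part $(i)$, take $\mathbf{x}=\mathbf{1}=(1,\ldots,1)$. Every coordinate equals $1$, so for each $j$ the increment $1\mapsto 2$ is admissible but the decrement $1\mapsto 0$ is not; hence exactly $n$ of the $2n$ candidates survive, giving $\mathcal{M}_1(\mathbf{1})=n$. For part $(ii)$, suppose exactly $k$ coordinates of $\mathbf{x}$ are $\neq 1$ (so those are $\geqslant 2$) and the remaining $n-k$ coordinates equal $1$. For each of the $k$ coordinates with $x_j\geqslant 2$, both neighbours $x_j\pm 1$ lie in $\mathbb{N}$, contributing $2$ each; for each of the $n-k$ coordinates with $x_j=1$, only $x_j+1$ is admissible, contributing $1$ each. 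Summing, $\mathcal{M}_1(\mathbf{x}) = 2k + (n-k) = n+k$, as claimed. Note this also recovers $(i)$ as the case $k=0$, and consistency of the two clauses; one may simply state $(i)$ separately for emphasis since it is the case used later.

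There is essentially no obstacle here — the lemma is a routine lattice-point count — so the main thing to be careful about is the reduction step: justifying that $d(\mathbf{x},\mathbf{y})=1$ forces a single-coordinate $\pm 1$ change, which follows because a sum of non-negative integer squares equalling $1$ must consist of a single term equal to $1$. Everything else is bookkeeping over which of the $2n$ sign-and-index choices respects the constraint $\mathbb{N}=\{1,2,3,\ldots\}$.
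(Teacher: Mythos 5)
Your proof is correct and follows essentially the same route as the paper's, namely a direct enumeration of the neighbours at distance $1$; you additionally make explicit the (elementary) justification that $\sum_j(x_j-y_j)^2=1$ forces a single coordinate to change by $\pm 1$, which the paper leaves implicit when it simply lists the sets. The bookkeeping $2k+(n-k)=n+k$ matches the paper's explicit list after its WLOG reduction via Proposition~\ref{proposition-2.1}.
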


\begin{proof}
$(i)$ It is obvious that the set $\left\{\mathbf{y}\in\mathbb{N}^n \colon d(\mathbf{1},\mathbf{y})=1\right\}$ consists of
\begin{equation*}
  \{(\underbrace{2}_{1\hbox{\footnotesize{-st}}},1,\ldots,1), (1,\underbrace{2}_{2\hbox{\footnotesize{-nd}}},\ldots,1), \ldots, (1,\ldots,\underbrace{2}_{n\hbox{\footnotesize{-th}}})\},
\end{equation*}
which implies the equality $\mathcal{M}_1(\mathbf{1})=n$.

\smallskip

$(ii)$ Fix an arbitrary $\mathbf{x}\in \mathbb{N}^n$ such that exactly $k$ coordinates of $\mathbf{x}$ distinct from $1$ for some $k\in\{1,\ldots,n\}$. By Proposition~\ref{proposition-2.1} without loss of generality we may assume that only the first $k$ coordinates of $\mathbf{x}$ distinct from $1$, i.e., $\mathbf{x}=(x_1,\ldots,x_k,1,\ldots,1)$ for some $x_1>1,\ldots,x_k>1$. This implies that the set
$\left\{\mathbf{y}\in\mathbb{N}^n \colon d(\mathbf{x},\mathbf{y})=1\right\}$ coincides with the following set
\begin{align*}
  \{&(x_1-1,\ldots,x_k,1,\ldots,1), (x_1+1,\ldots,x_k,1,\ldots,1), \ldots, \\
  &(x_1,\ldots,x_k-1,1,\ldots,1), (x_1,\ldots,x_k+1,1,\ldots,1),  \\
  & (x_1,\ldots,x_k,\underbrace{2}_{(k+1)\hbox{\footnotesize{-th}}},\ldots,1), \ldots, (x_1,\ldots,x_k,1,\ldots,\underbrace{2}_{n\hbox{\footnotesize{-th}}})\},
\end{align*}
and hence $\mathcal{M}_1(\mathbf{x})=n+k$.
\end{proof}

Lemma~\ref{lemma-2.2}$(i)$ implies the following corollary.

\begin{corollary}\label{corollarry-2.3}
Let $n$ be a positive integer $\geqslant 2$. Then
$(\mathbf{1})\alpha=\mathbf{1}$ for every isometry $\alpha$ of $\mathbb{N}^n$.
\end{corollary}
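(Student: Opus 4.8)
The plan is to use Lemma~\ref{lemma-2.2} to single out $\mathbf{1}$ among all points of $\mathbb{N}^n$ by a purely metric invariant, and then to observe that this invariant is preserved by every isometry of $\mathbb{N}^n$. Concretely, for a point $\mathbf{x}\in\mathbb{N}^n$ consider the number $\mathcal{M}_1(\mathbf{x})$ of points lying at distance $1$ from $\mathbf{x}$. By Lemma~\ref{lemma-2.2}$(i)$ we have $\mathcal{M}_1(\mathbf{1})=n$, while by Lemma~\ref{lemma-2.2}$(ii)$, if $\mathbf{x}$ has $k\geqslant 1$ coordinates distinct from $1$ then $\mathcal{M}_1(\mathbf{x})=n+k\geqslant n+1$. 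Since every $\mathbf{x}\neq\mathbf{1}$ has at least one coordinate distinct from $1$, this shows that $\mathbf{1}$ is the \emph{unique} point of $\mathbb{N}^n$ with $\mathcal{M}_1(\mathbf{1})=n$; in fact $\mathcal{M}_1(\mathbf{1})<\mathcal{M}_1(\mathbf{x})$ for every $\mathbf{x}\in\mathbb{N}^n\setminus\{\mathbf{1}\}$.

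Next I would check that $\mathcal{M}_1$ is invariant under an arbitrary isometry $\alpha$ of $\mathbb{N}^n$. Since $\alpha$ is a bijection of $\mathbb{N}^n$ preserving the metric $d$, for each $\mathbf{x}\in\mathbb{N}^n$ the map $\mathbf{y}\mapsto\mathbf{y}\alpha$ carries the set $\left\{\mathbf{y}\in\mathbb{N}^n\colon d(\mathbf{x},\mathbf{y})=1\right\}$ injectively into $\left\{\mathbf{z}\in\mathbb{N}^n\colon d(\mathbf{x}\alpha,\mathbf{z})=1\right\}$, and the reverse inclusion follows from surjectivity of $\alpha$ together with the fact that $\alpha^{-1}$ is again an isometry of $\mathbb{N}^n$. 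Hence $\mathcal{M}_1(\mathbf{x}\alpha)=\mathcal{M}_1(\mathbf{x})$ for every $\mathbf{x}\in\mathbb{N}^n$; in particular $\mathcal{M}_1(\mathbf{1}\alpha)=\mathcal{M}_1(\mathbf{1})=n$.

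Combining the two steps finishes the argument: $\mathbf{1}\alpha$ is a point of $\mathbb{N}^n$ with $\mathcal{M}_1(\mathbf{1}\alpha)=n$, and since $\mathbf{1}$ is the only such point we conclude $\mathbf{1}\alpha=\mathbf{1}$. The one point requiring care is the reading of the word \emph{isometry}: it must be understood as a \emph{surjective} (hence bijective) isometry of $\mathbb{N}^n$, since otherwise the coordinatewise shift $(x_1,\ldots,x_n)\mapsto(x_1+1,\ldots,x_n+1)$ would be a distance-preserving self-map moving $\mathbf{1}$. Surjectivity is exactly what is used above to transport the count $\mathcal{M}_1$ from $\mathbf{x}$ to $\mathbf{x}\alpha$; beyond this no real obstacle appears, as everything reduces to the explicit enumeration already performed in the proof of Lemma~\ref{lemma-2.2}.
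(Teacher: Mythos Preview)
Your proof is correct and follows essentially the same approach as the paper, which simply notes that the corollary is implied by Lemma~\ref{lemma-2.2}: the point $\mathbf{1}$ is characterized as the unique point of $\mathbb{N}^n$ with exactly $n$ neighbours at distance $1$, and a (bijective) isometry preserves this count. Your write-up is more explicit than the paper's one-line justification, and your remark that surjectivity of $\alpha$ is genuinely needed is a useful observation that the paper leaves implicit.
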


For any positive integers $k>1$ and $m$ we denote
\begin{equation*}
  \mathbf{k}_1=(\underbrace{k}_{1\hbox{\footnotesize{-st}}},1,\ldots,1),\; \ldots, \; \mathbf{k}_j=(1,\ldots,1,\underbrace{k}_{j\hbox{\footnotesize{-th}}},1,\ldots,1), \; \ldots, \;
  \mathbf{k}_n=(1,\ldots,1,\underbrace{k}_{n\hbox{\footnotesize{-th}}})
\end{equation*}
and
\begin{equation*}
  \mathbf{C}_m=\left\{\left(x_1,\ldots,x_n\right)\in\mathbb N^n \colon x_1\leqslant m,\ldots,x_n\leqslant m\right\}.
\end{equation*}

Later we need the following technical lemma.

\begin{lemma}\label{lemma-2.5}
Let $x$ and $y$ be any positive integers. If the sequence $\{a_i\}_{i\in\mathbb{N}}$, where $a_i=\sqrt{(x+i)^2+y}$, contains an integer then $\{a_i\}_{i\in\mathbb{N}}$ has infinitely many non-integer members.
\end{lemma}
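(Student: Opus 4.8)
The plan is to prove the stronger and cleaner statement that $a_i$ is an integer for only finitely many indices $i\in\mathbb{N}$; the assertion of the lemma then follows at once, since all of the remaining indices — of which there are infinitely many — yield non-integer values $a_i$. In particular the hypothesis that the sequence contains an integer turns out not to be strictly necessary for the conclusion, but it is the case of interest and it does no harm to keep it.

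First I would suppose that $a_i=\sqrt{(x+i)^2+y}=m$ is an integer for some $i\in\mathbb{N}$. Since $y\geqslant 1$ we have $m^2=(x+i)^2+y>(x+i)^2$, hence $m>x+i$; as $m$ and $x+i$ are positive integers, this forces $m\geqslant x+i+1$.

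The key step is to turn this into a bound on $i$ by a gap estimate between consecutive squares. From $m\geqslant x+i+1$ we obtain
\begin{equation*}
y=m^2-(x+i)^2\geqslant(x+i+1)^2-(x+i)^2=2(x+i)+1,
\end{equation*}
whence $x+i\leqslant\tfrac{y-1}{2}$. Thus every index $i$ for which $a_i\in\mathbb{N}$ satisfies $i\leqslant\tfrac{y-1}{2}-x$, and so lies in a finite (possibly empty) initial segment of $\mathbb{N}$. Consequently infinitely many indices $i$ give $a_i\notin\mathbb{N}$, which is exactly the claim.

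I do not expect any genuine obstacle here: the whole argument is the one-line observation that $m^2+y$ cannot be a perfect square once $m$ is large compared with $y$. The only point that must be respected is that $y$ is a \emph{positive} integer, which is precisely what makes the inequality $m>x+i$ strict and hence yields $m\geqslant x+i+1$; for $y=0$ the statement fails, since then $a_i=x+i$ is always an integer.
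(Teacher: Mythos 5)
Your proof is correct, and it takes a genuinely different (indeed stronger) route than the paper's. The paper argues locally: assuming $n=\sqrt{x^2+y}$ is an integer, it squeezes the next term strictly between consecutive integers, $n<\sqrt{(x+1)^2+y}<n+1$, so the term immediately following any integer term is irrational; the conclusion then follows from the (implicit) observation that an infinite sequence in which no two consecutive terms are both integers must have infinitely many non-integer terms. You instead prove the global statement that only finitely many terms can be integers at all: if $\sqrt{(x+i)^2+y}=m\in\mathbb{N}$ then $m\geqslant x+i+1$, which forces $y\geqslant 2(x+i)+1$ and hence $i\leqslant\tfrac{y-1}{2}-x$. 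Both arguments rest on the same gap estimate between consecutive squares, but yours yields an explicit bound on the indices of integer terms, renders the hypothesis that the sequence contains an integer superfluous, and avoids the small bookkeeping step needed to pass from the paper's local statement to the stated conclusion. Your remark that positivity of $y$ is essential (for $y=0$ every term is an integer) is also accurate.
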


\begin{proof}
Suppose that $n=\sqrt{x^2+y}$ is an integer. Since $x<\sqrt{x^2+y}$ we get that
\begin{align*}
  &2x<2\sqrt{x^2+y} \qquad \Longleftrightarrow\\
  \Longleftrightarrow \qquad & x^2+2x+1+y<x^2+y+2\sqrt{x^2+y}+1 \qquad \Longleftrightarrow\\
  \Longleftrightarrow \qquad & (x+1)^2+y<(\sqrt{x^2+y}+1)^2 \qquad \Longleftrightarrow\\
  \Longleftrightarrow \qquad & \sqrt{(x+1)^2+y}<n+1,
\end{align*}
which implies that $n<\sqrt{(x+1)^2+y}<n+1$. Thus, $\sqrt{(x+1)^2+y}$ is irrational, and hence the statement of the lemma holds.
\end{proof}

\begin{lemma}\label{lemma-2.6}
Let $n$ be a positive integer $\geqslant 2$.
Let $\alpha\colon \mathbb N^n\rightharpoonup\mathbb N^n$ be a partial cofinite isometry of $\mathbb N^n$. Then for every $i\in\{1,\ldots,n\}$ there exists  a unique  $j(i)\in\{1,\ldots,n\}$ and an integer $q(i)$ such that $(\mathbf{m}_i)\alpha=\mathbf{(m+q(i))}_{j(i)}$ for any $\mathbf{m}_i\in\operatorname{dom}\alpha$. Moreover $\alpha$ determines the permutation $i\mapsto j(i)$ of the set $\{1,\ldots,n\}$.
\end{lemma}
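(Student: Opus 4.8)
The plan is to study the restriction of $\alpha$ to the axis ray $\vec{\mathbf 1}_i=\{\mathbf m_i\colon m\in\mathbb N\}$ and show that this restriction is forced to be a shift onto a tail of another axis ray. Since $\alpha$ is cofinite, $\mathbb N^n\setminus\operatorname{dom}\alpha$ and $\mathbb N^n\setminus\operatorname{ran}\alpha$ are finite, hence have all coordinates bounded by some $N$; in particular there is $m_0>N$ with $\mathbf m_i\in\operatorname{dom}\alpha$ for all $m\geqslant m_0$, and every point of $\mathbb N^n$ having some coordinate $>N$ lies in $\operatorname{dom}\alpha\cap\operatorname{ran}\alpha$. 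Write $\mathbf y_m:=(\mathbf m_i)\alpha$. First I would show that the tail $\{\mathbf y_m\colon m\geqslant m_0\}$ is axis-parallel: for $m\geqslant m_0$ we have $d(\mathbf m_i,\mathbf{(m+1)}_i)=1$, so $d(\mathbf y_m,\mathbf y_{m+1})=1$, and since $\mathbf y_m,\mathbf y_{m+1}\in\mathbb Z^n$ and a sum of squares of integers equals $1$ only when exactly one of them is $1$, the point $\mathbf y_{m+1}$ is obtained from $\mathbf y_m$ by changing a single coordinate by $\pm1$. Then $d(\mathbf y_m,\mathbf y_{m+2})=2$ is possible only if two such consecutive unit steps use the same coordinate with the same sign, so there are a fixed $j(i)\in\{1,\dots,n\}$ and a fixed sign with $\mathbf y_m=\mathbf y_{m_0}+(m-m_0)\mathbf e$ for all $m\geqslant m_0$, where $\mathbf e=\pm\mathbf e_{j(i)}$ and $\mathbf e_{j}$ is the vector with $j$-th coordinate $1$ and all others $0$; since every $\mathbf y_m\in\mathbb N^n$, the sign is $+$.

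Next I would pin down $\mathbf y_{m_0}=(p_1,\dots,p_n)$. For $m$ large the point $\mathbf y_m$ has $j(i)$-th coordinate $p_{j(i)}+m-m_0\geqslant 2$ and all other coordinates $p_k$, so by Lemma~\ref{lemma-2.2} we get $\mathcal M_1(\mathbf y_m)=n+1+\ell$ with $\ell=|\{k\ne j(i)\colon p_k\ne 1\}|$. On the other hand $\alpha$ restricts to an injection of the set of points of $\operatorname{dom}\alpha$ at distance $1$ from $\mathbf m_i$ into the set of points of $\operatorname{ran}\alpha$ at distance $1$ from $\mathbf y_m$, and $\alpha^{-1}$ gives an injection the other way, so these two finite sets have the same cardinality; the first has at most $\mathcal M_1(\mathbf m_i)=n+1$ elements by Lemma~\ref{lemma-2.2} (for $m$ large $\mathbf m_i$ has a single coordinate $\ne 1$), while for $m$ large every point at distance $1$ from $\mathbf y_m$ has $j(i)$-th coordinate $>N$ and hence lies in $\operatorname{ran}\alpha$, so the second set has exactly $\mathcal M_1(\mathbf y_m)=n+1+\ell$ elements. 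Therefore $\ell=0$, i.e. $\mathbf y_{m_0}=\mathbf{(p_{j(i)})}_{j(i)}$, and hence $(\mathbf m_i)\alpha=\mathbf{(m+q(i))}_{j(i)}$ for all $m\geqslant m_0$, with $q(i):=p_{j(i)}-m_0$.

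To remove the restriction $m\geqslant m_0$, I would take an arbitrary $\mathbf m_i\in\operatorname{dom}\alpha$ and compare the distances from $(\mathbf m_i)\alpha$ to the two already-known image points $\mathbf{(m_0+q(i))}_{j(i)}$ and $\mathbf{(m_0+1+q(i))}_{j(i)}$, which are $|m-m_0|$ and $|m-m_0-1|$; subtracting the two squared-distance identities yields a linear equation that forces the $j(i)$-th coordinate of $(\mathbf m_i)\alpha$ to be $m+q(i)$, and substituting back forces every other coordinate of $(\mathbf m_i)\alpha$ to be $1$, so $(\mathbf m_i)\alpha=\mathbf{(m+q(i))}_{j(i)}$ (geometrically, $(\mathbf m_i)\alpha$ is collinear with those two points on the $j(i)$-th axis). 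Uniqueness of $j(i)$ and $q(i)$ follows from this formula evaluated at a large $m$, where $m+q(i)\geqslant 2$. Finally, if $j(i_1)=j(i_2)$ for some $i_1\ne i_2$, then choosing $m,m'$ large with $m+q(i_1)=m'+q(i_2)$ gives $(\mathbf m_{i_1})\alpha=(\mathbf{m'}_{i_2})\alpha$, whence $\mathbf m_{i_1}=\mathbf{m'}_{i_2}$ by injectivity of $\alpha$; but $\mathbf m_{i_1}$ and $\mathbf{m'}_{i_2}$ have their unique coordinate $\ne 1$ in positions $i_1$ and $i_2$, which is impossible when $m,m'\geqslant 2$. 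Hence $i\mapsto j(i)$ is an injective selfmap of $\{1,\dots,n\}$, that is, a permutation.

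The step I expect to be the main obstacle is pinning down $\mathbf y_{m_0}$, i.e. showing that the image ray is not merely axis-parallel but an axis ray emanating from $\mathbf 1$ along some coordinate direction, with all but one coordinate equal to $1$. The difficulty is that a partial cofinite isometry need not be defined at $\mathbf 1$, so Corollary~\ref{corollarry-2.3} does not apply directly; the required information has to be extracted from the local invariant $\mathcal M_1$ via Lemma~\ref{lemma-2.2}, the two-sided use of $\alpha$ and $\alpha^{-1}$, and the fact that the complements of $\operatorname{dom}\alpha$ and $\operatorname{ran}\alpha$ have bounded coordinates.
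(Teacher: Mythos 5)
Your proof is correct, and it takes a genuinely different route from the paper's. The paper reduces (via a relabelling of coordinates) to the situation $(\mathbf{k}_1)\alpha=\mathbf{q}_1$ and then leans entirely on the number-theoretic Lemma~\ref{lemma-2.5}: a tail of the sequence $\sqrt{(x+p)^2+y}$ cannot consist only of integers unless $y=0$, which is used both to rule out a reversal of the direction of the shift along the image axis and to force the images of the low-index points $\mathbf{l}_1$ back onto the axis. You avoid Lemma~\ref{lemma-2.5} altogether: the image of the tail of $\vec{\mathbf 1}_i$ is shown to be an arithmetic progression along a single coordinate direction by the elementary observation that two consecutive unit lattice steps realizing total distance $2$ must be the same step; the base point of that progression is forced to have all remaining coordinates equal to $1$ by comparing $\mathcal M_1(\mathbf m_i)=n+1$ with $\mathcal M_1(\mathbf y_m)=n+1+\ell$ via the two-sided injections induced by $\alpha$ and $\alpha^{-1}$ (Lemma~\ref{lemma-2.2} plus cofiniteness); and the points of small index are handled by subtracting two squared-distance identities, which pins $(\mathbf m_i)\alpha$ to the axis by pure algebra. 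Your approach buys three things: it supplies an explicit justification for a step the paper passes over rather quickly (that the image of the tail of an axis ray lies on an axis ray $\vec{\mathbf 1}_{j}$ at all, rather than merely being axis-parallel), it replaces the irrationality argument by purely combinatorial and linear-algebraic reasoning, and it proves the final ``permutation'' assertion explicitly via injectivity of $\alpha$, which the paper leaves implicit. The paper's route is shorter where it applies, but your counting argument with $\mathcal M_1$ is the more robust way to locate the image ray.
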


\begin{proof}
Since $\alpha$ is a partial cofinite isometry of $\mathbb N^n$ there exists a positive integer $m\geqslant 3$ such that $\mathbb N^n\setminus\operatorname{dom}\alpha\subseteq \mathbf{C}_{m-1}$ and $\mathbb N^n\setminus\operatorname{ran}\alpha\subseteq \mathbf{C}_{m-1}$. Since the set $\mathbf{C}_{m+1}$ is finite there exists a positive integer $k$ such that $\mathbf{p}_i\notin \mathbf{C}_{m+1}$ and $(\mathbf{p}_i)\alpha\notin \mathbf{C}_{m+1}$ for any integer $p\geqslant k$.  By Proposition~\ref{proposition-2.1} without loss of generality we may assume that $i=j(i)=1$, i.e.,   $(\mathbf{k}_1)\alpha=\mathbf{q}_1$ for some positive integer $q$.

\smallskip

We claim that $(\mathbf{(k+s)}_1)\alpha=\mathbf{(q+s)}_1$ for any positive integer $s$. Since $\mathbf{k}_1\notin \mathbf{C}_{m+1}$, we have that
\begin{equation*}
  \left|\left\{\mathbf{x}\in\operatorname{dom}\alpha\colon d(\mathbf{x},\mathbf{(k-2+t)}_1)=1\right\}\right|=n+1,
\end{equation*}
for every positive integer $t$.
Since $\alpha$ is a partial cofinite isometry of $\mathbb N^n$ and $\mathbf{(k-1)}_1\in \operatorname{dom}\alpha$, the above implies that either $(\mathbf{(k+1)}_1)\alpha=\mathbf{(q-1)}_1$ or $(\mathbf{(k+1)}_1)\alpha=\mathbf{(q+1)}_1$. We claim that $(\mathbf{(k+1)}_1)\alpha=\mathbf{(q+1)}_1$. Suppose to the contrary that $(\mathbf{(k+1)}_1)\alpha=\mathbf{(q-1)}_1$.
Next we consider the following sequence $\left\{\mathbf{(k+p)}_1\right\}_{p\in\mathbb{N}}$ in $\mathbb N^n$.
By the above assumption  there exists a positive integer $s$ such that $(\mathbf{(k+p)}_1)\alpha\notin \mathbf{C}_{m+1}\cup\vec{\mathbf{1}}_1\cup\cdots\cup\vec{\mathbf{1}}_n$ for all $p\geqslant s$.  By Lemma~\ref{lemma-2.5} the sequence $\left\{d((\mathbf{(k+p)}_1)\alpha,(\mathbf{k}_1))\alpha\right\}_{p\in\mathbb{N}}$ contains non-integer numbers. But the sequence $\left\{d(\mathbf{(k+p)}_1,\mathbf{k}_1)\right\}_{p\in\mathbb{N}}$ has only positive integers, which contradicts that $\alpha$ is a partial isometry of $\mathbb N^n$. The obtained contradiction implies that $(\mathbf{(k+1)}_1)\alpha=\mathbf{(q+1)}_1$. Next, similar arguments and induction imply that $(\mathbf{(k+p)}_1)\alpha=\mathbf{(q+p)}_1$ for any $p\in\mathbb{N}$.

\smallskip

Fix an arbitrary integer $l<k$ such that $\mathbf{l}_1\in\operatorname{dom}\alpha$. We claim that $(\mathbf{l}_1)\alpha=(\mathbf{q-k+l})_1$. Indeed, in the other case we have that $d(\mathbf{l}_1,\mathbf{(k+p)}_1)=k+p-l$ for any $p\in\mathbb{N}$, and hence the sequence $\left\{d(\mathbf{l}_1,\mathbf{(k+p)}_1)\right\}_{p\in\mathbb{N}}$ contains only integer elements. But since $(\mathbf{l}_1)\alpha=(x_1,x_2,\ldots,x_n)$ with some $x_i\neq 1$, $i=2,\ldots,n$, we have that
\begin{equation*}
  d((\mathbf{l}_1)\alpha,(\mathbf{(k+p)}_1)\alpha)=\sqrt{(x_1-k+p)^2+(x_2-1)^2+\cdots+(x_2-1)^n},
\end{equation*}
and hence by Lemma~\ref{lemma-2.5} the sequence $\left\{d((\mathbf{l}_1)\alpha,(\mathbf{(k+p)}_1)\alpha)\right\}_{p\in\mathbb{N}}$ contains non-integer elements. This contradicts that $\alpha$ is a partial isometry of $\mathbb{N}^n$, which completes the proof of the lemma.
\end{proof}

\begin{lemma}\label{lemma-2.8}
Let $n$ be a positive integer $\geqslant 2$.
Let $\alpha\colon \mathbb N^n\rightharpoonup\mathbb N^n$ be a partial cofinite isometry of $\mathbb N^n$ such that $(\mathbf{m}_i)\alpha\in\vec{\mathbf{1}}_i$ for some $\mathbf{m}_i\in\operatorname{dom}\alpha\setminus\{\mathbf{1}\}$, $m\in\mathbb{N}$, and any $i\in\{1,\ldots,n\}$. Then $(\mathbf{x}_i)\alpha=\mathbf{x}_i$ for any $\mathbf{x}_i\in\operatorname{dom}\alpha\cap\vec{\mathbf{1}}_i$.
\end{lemma}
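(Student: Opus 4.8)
The plan is to build on Lemma~\ref{lemma-2.6}, which already tells us that $\alpha$ acts on each axis $\vec{\mathbf{1}}_i$ by a shift composed with a coordinate permutation. Under the hypothesis that $(\mathbf{m}_i)\alpha\in\vec{\mathbf{1}}_i$ for some $\mathbf{m}_i\neq\mathbf{1}$, Lemma~\ref{lemma-2.6} forces the permutation to fix the index $i$, so there is an integer $q(i)$ with $(\mathbf{x}_i)\alpha=\mathbf{(x+q(i))}_i$ for all $\mathbf{x}_i\in\operatorname{dom}\alpha\cap\vec{\mathbf{1}}_i$. The entire content to prove is then that $q(i)=0$. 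First I would pin down $q(i)$ from the given datum: applying the formula to $\mathbf{m}_i$ gives $m+q(i)\geqslant 1$, and since $\alpha$ is cofinite we may in fact pick $m$ arbitrarily large inside $\operatorname{dom}\alpha\cap\vec{\mathbf{1}}_i$, so $q(i)$ is a fixed integer independent of which large $m$ we use.

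Next I would rule out $q(i)<0$ and $q(i)>0$ separately using Corollary~\ref{corollarry-2.3}. The key rigid point is $\mathbf{1}$: by Corollary~\ref{corollarry-2.3}, any isometry of $\mathbb{N}^n$ fixes $\mathbf{1}$, and more importantly $\mathbf{1}$ is characterized metrically among points of $\mathbb{N}^n$ as the unique point with $\mathcal{M}_1=n$ (Lemma~\ref{lemma-2.2}). If $q(i)>0$, then the point $\mathbf{(1+q(i))}_i$ (which lies in $\operatorname{dom}\alpha$ once $q(i)$ is not too large, or we argue along the whole axis) would have to be the image of $\mathbf{1}_i=\mathbf{1}$ if $1\in\operatorname{dom}\alpha\cap\vec{\mathbf{1}}_i$; but $\mathbf{1}$ must map to $\mathbf{1}$, contradiction unless $q(i)=0$. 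The subtlety is that $\mathbf{1}$ might lie outside $\operatorname{dom}\alpha$; to handle this I would instead compare cardinalities $\mathcal{M}_1$: for a point $\mathbf{x}_i=(1,\ldots,x_i,\ldots,1)$ with $x_i>1$ we have $\mathcal{M}_1(\mathbf{x}_i)=n+1$ by Lemma~\ref{lemma-2.2}$(ii)$, and the count must be preserved, which is automatic on the axis, so the genuine obstruction to a nonzero shift comes from the point adjacent to $\mathbf{1}$.

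Concretely, I would argue as follows for $q(i)<0$: take $x$ large with $\mathbf{x}_i\in\operatorname{dom}\alpha$; then the points $\mathbf{x}_i,\mathbf{(x-1)}_i,\ldots$ map to $\mathbf{(x+q(i))}_i,\mathbf{(x+q(i)-1)}_i,\ldots$, and eventually an axis point at position $2$ or $1$ would map to position $2+q(i)\leqslant 1$ or $1+q(i)\leqslant 0$, which is either outside $\mathbb{N}^n$ or forces $\mathbf{2}_i\mapsto\mathbf{1}$; the latter contradicts Lemma~\ref{lemma-2.2} since $\mathcal{M}_1(\mathbf{2}_i)=n+1\neq n=\mathcal{M}_1(\mathbf{1})$. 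For $q(i)>0$, symmetrically, the point $\mathbf{1}$ itself (if in $\operatorname{dom}\alpha$) or the metric characterization of $\mathbf{1}$ blocks the shift: nothing on the axis can map onto the preimage positions $1,\ldots,q(i)$, forcing those to be the (finitely many) points missing from $\operatorname{ran}\alpha$, yet $\mathbf{1}\in\operatorname{ran}\alpha$ always by Corollary~\ref{corollarry-2.3}, so $q(i)\leqslant 0$ as well. Hence $q(i)=0$ and $(\mathbf{x}_i)\alpha=\mathbf{x}_i$ for all $\mathbf{x}_i\in\operatorname{dom}\alpha\cap\vec{\mathbf{1}}_i$. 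The main obstacle I anticipate is the careful bookkeeping of which specific axis points are guaranteed to lie in $\operatorname{dom}\alpha$ and $\operatorname{ran}\alpha$ given only cofiniteness; the cleanest route is to phrase everything in terms of the metric invariant $\mathcal{M}_1$ and the fact that $\mathbf{1}$ is its unique minimizer, which sidesteps having to track the finite exceptional sets precisely.
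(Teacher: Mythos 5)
Your reduction to showing $q(i)=0$ is the right framing, but the mechanism you propose for ruling out $q(i)\neq 0$ does not work, and the failure is exactly at the point you flag as a "subtlety" and then dismiss. Corollary~\ref{corollarry-2.3} and the $\mathcal{M}_1$-characterization of $\mathbf{1}$ are statements about \emph{total} isometries of $\mathbb{N}^n$; for a \emph{partial cofinite} isometry neither is available near the corner. In particular, $\mathbf{1}\in\operatorname{ran}\alpha$ is simply false in general (the identity restricted to $\mathbb{N}^n\setminus\{\mathbf{1}\}$ already violates it), so your argument for $q(i)>0$ — that positions $1,\ldots,q(i)$ on the axis cannot all be missing from $\operatorname{ran}\alpha$ — collapses: they can, since the range only needs to be cofinite. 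Likewise for $q(i)<0$ the low axis positions $\mathbf{1}_i,\mathbf{2}_i,\ldots$ need not lie in $\operatorname{dom}\alpha$ at all, so nothing is ever forced to map to position $\leqslant 1$. The quantity $\mathcal{M}_1(\mathbf{x})$ is not an invariant of a partial isometry at points close to the finite exceptional sets, which is precisely where you want to apply it. The underlying obstruction is that a nonzero shift along a single axis, with finitely many low positions deleted from domain and range, \emph{is} a legitimate partial cofinite isometry of that axis viewed in isolation; no argument confined to one axis and the corner can detect the contradiction.

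The actual obstruction, and the route the paper takes, is cross-axis: if $\alpha$ shifts axis $1$ by $p$ and axis $2$ by $-q$ (both restricted to large coordinates, where the domain is guaranteed), then preservation of the distances $d(\mathbf{x}_1,\mathbf{y}_2)=\sqrt{(x-1)^2+(y-1)^2}$ and $d(\mathbf{(x+1)}_1,\mathbf{(y+1)}_2)=\sqrt{x^2+y^2}$ yields two polynomial identities in $p,q,x,y$ whose difference forces $p=q$, and substituting back gives $2p(x-y+p)=0$, which is impossible for $p>0$ once $x>y$ (and symmetrically for $p<0$). To repair your proof you would need to replace the corner/$\mathcal{M}_1$ argument by some comparison of the axis $\vec{\mathbf{1}}_i$ with points off that axis that are far from the origin — which is essentially the paper's computation.
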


\begin{proof}
Suppose to the contrary that there exists $i\in\{1,\ldots,n\}$ such that $(\mathbf{x}_i)\alpha\neq\mathbf{x}_i$ for some $\mathbf{x}_i\in \vec{\mathbf{1}}_i$. Without loss of generality we may assume that $i=1$. Then there exists a non-zero integer $p$ such that $(\mathbf{x}_1)\alpha=\mathbf{(x+p)}_1$. We assume that $p>0$.

\smallskip

Fix an arbitrary $\mathbf{y}_2\in\operatorname{dom}\alpha\cap \vec{\mathbf{1}}_2\setminus \{\mathbf{1}\}$. Lemma~\ref{lemma-2.6} implies that without loss of generality we may assume that $y<x$.  Since $\alpha$ is a partial isometry of $\mathbb{N}^n$,
\begin{equation}\label{eq-2.1}
  d((\mathbf{x}_1)\alpha,(\mathbf{y}_2)\alpha)=d(\mathbf{x}_1,\mathbf{y}_2)=\sqrt{(x-1)^2+(y-1)^2}.
\end{equation}
This and the equality $(\mathbf{x}_1)\alpha=\mathbf{(x+p)}_1$ imply that there exists a positive integer $q$ such that $(\mathbf{y}_2)\alpha=\mathbf{(x-q)}_2$. Then we have that
\begin{equation}\label{eq-2.2}
  d((\mathbf{x}_1)\alpha,(\mathbf{y}_2)\alpha)=d(\mathbf{(x+p)}_1,\mathbf{(y-q)}_2)=\sqrt{(x+p-1)^2+(y-q-1)^2}.
\end{equation}
By equalities \eqref{eq-2.1} and \eqref{eq-2.2} we get that
\begin{equation*}
  (x-1)^2+(y-1)^2=(x-1)^2+2(x-1)p+p^2+(y-1)^2-2(y-1)q+q^2,
\end{equation*}
and hence
\begin{equation}\label{eq-2.3}
  p^2+q^2+2(xp-yq)-2(p-q)=0.
\end{equation}
Again, since $\alpha$ is a partial isometry of $\mathbb{N}^n$, Lemma~\ref{lemma-2.6} implies that
\begin{equation}\label{eq-2.4}
  d((\mathbf{(x+1)}_1)\alpha,(\mathbf{(y+1)}_2)\alpha)=d(\mathbf{(x+1)}_1,\mathbf{(y+1)}_2)=\sqrt{(x+1-1)^2+(y+1-1)^2}=\sqrt{x^2+y^2},
\end{equation}
$(\mathbf{(x+1)}_1)\alpha=\mathbf{(x+1+p)}_1$ and $(\mathbf{(y+1)}_2)\alpha=\mathbf{(y+1-q)}_2$. Then
\begin{equation}\label{eq-2.5}
\begin{split}
  d((\mathbf{(x+1)}_1)\alpha,(\mathbf{(y+1)}_2)\alpha) & =d(\mathbf{(x+1+p)}_1,\mathbf{(y+1-q)}_2)= \\
    & =\sqrt{(x+1+p-1)^2+(y+1-q-1)^2}= \\
    & =\sqrt{(x+p)^2+(y-q)^2}.
\end{split}
\end{equation}
By equalities \eqref{eq-2.4} and \eqref{eq-2.5} we have that
\begin{equation*}
  x^2+y^2=x^2+2xp+p^2+y^2-2yq+q^2,
\end{equation*}
and after some arithmetic simplifications we get that
\begin{equation}\label{eq-2.6}
  p^2+q^2+2(xp-yq)=0.
\end{equation}
Then equalities \eqref{eq-2.3} and \eqref{eq-2.6} imply that $p=q$. Therefore, $(\mathbf{y}_2)\alpha=\mathbf{(x-p)}_2$. Then equalities \eqref{eq-2.1} and \eqref{eq-2.2} imply that
\begin{equation*}
  (x-1)^2+(y-1)^2=(x-1)^2+2(x-1)p+p^2+(y-1)^2-2(y-1)p+p^2,
\end{equation*}
and hence
\begin{equation*}
  2p(x-y+p)=0.
\end{equation*}
This contradicts the assumptions $p>0$ and $x>y$.

\smallskip

In the case when $p<0$ we choose $x<y$, and by similar way as in the above we get a contradiction. The obtained contradictions imply that $(\mathbf{x}_i)\alpha=\mathbf{x}_i$ for any $\mathbf{x}_i\in \vec{\mathbf{1}}_i$ with $x\geqslant k$, $i=1,\ldots,n$. This completes the proof of the lemma.
\end{proof}

\begin{theorem}\label{theorem-2.9}
Let $n$ be a positive integer $\geqslant 2$ and $\alpha\colon \mathbb N^n\rightharpoonup\mathbb N^n$ be a partial cofinite isometry of $\mathbb N^n$ such that $(\mathbf{m}_i)\alpha\in\vec{\mathbf{1}}_i$ for some $\mathbf{m}_i\in\operatorname{dom}\alpha\setminus\{\mathbf{1}\}$, $m\in\mathbb{N}$, and any $i\in\{1,\ldots,n\}$. Then $(\mathbf{x})\alpha=\mathbf{x}$ for any $\mathbf{x}\in\operatorname{dom}\alpha$.
\end{theorem}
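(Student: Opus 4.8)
The plan is to reduce the theorem to Lemma~\ref{lemma-2.8} together with the elementary observation that a point of $\mathbb{N}^n$ is uniquely determined by its distances to suitably chosen points on the coordinate axes. First note that the hypothesis of the theorem is precisely the hypothesis of Lemma~\ref{lemma-2.8} (and, in view of Lemma~\ref{lemma-2.6}, it also says that the permutation of $\{1,\ldots,n\}$ induced by $\alpha$ is the identity). Hence Lemma~\ref{lemma-2.8} gives at once that $(\mathbf{x}_i)\alpha=\mathbf{x}_i$ for every $i\in\{1,\ldots,n\}$ and every $\mathbf{x}_i\in\operatorname{dom}\alpha\cap\vec{\mathbf{1}}_i$; in particular $\alpha$ fixes all but finitely many points of each axis $\vec{\mathbf{1}}_i$.

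Now fix $\mathbf{x}=(x_1,\ldots,x_n)\in\operatorname{dom}\alpha$ and put $(\mathbf{x})\alpha=\mathbf{y}=(y_1,\ldots,y_n)$; the aim is to show $x_i=y_i$ for each $i$. Fix $i\in\{1,\ldots,n\}$. Since $\operatorname{dom}\alpha$ is cofinite while $\vec{\mathbf{1}}_i$ is infinite, we may choose a positive integer $k$ with $\mathbf{k}_i,\mathbf{(k+1)}_i\in\operatorname{dom}\alpha$; by the previous paragraph $(\mathbf{k}_i)\alpha=\mathbf{k}_i$ and $(\mathbf{(k+1)}_i)\alpha=\mathbf{(k+1)}_i$. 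As $\alpha$ is a partial isometry and $\mathbf{x},\mathbf{k}_i,\mathbf{(k+1)}_i\in\operatorname{dom}\alpha$, this yields
\begin{equation*}
  d(\mathbf{y},\mathbf{k}_i)=d(\mathbf{x},\mathbf{k}_i)\qquad\text{and}\qquad d(\mathbf{y},\mathbf{(k+1)}_i)=d(\mathbf{x},\mathbf{(k+1)}_i).
\end{equation*}
Squaring and using $d(\mathbf{x},\mathbf{k}_i)^2=(x_i-k)^2+\sum_{j\neq i}(x_j-1)^2$ (and the analogous expressions for the other three quantities), subtract one of these squared equalities from the other: the sums over $j\neq i$ cancel on both sides and one is left with $(x_i-k)^2-(x_i-k-1)^2=(y_i-k)^2-(y_i-k-1)^2$, that is $2x_i-2k-1=2y_i-2k-1$, and therefore $x_i=y_i$. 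Since $i$ was arbitrary we conclude $\mathbf{y}=\mathbf{x}$, i.e.\ $(\mathbf{x})\alpha=\mathbf{x}$, as required.

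I do not expect any genuine difficulty here: all the substantive work has already been carried out in Lemmas~\ref{lemma-2.6} and \ref{lemma-2.8}, and what remains is only the remark that two consecutive fixed axis points in a single coordinate direction already recover that coordinate. The one point that needs a little care is the cofiniteness bookkeeping --- choosing $k$ large enough that both $\mathbf{k}_i$ and $\mathbf{(k+1)}_i$ lie in $\operatorname{dom}\alpha$ (and, to stay safely within the range covered by Lemma~\ref{lemma-2.8}, large enough that these two points are genuinely fixed by $\alpha$).
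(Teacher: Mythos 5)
Your proof is correct and follows essentially the same route as the paper's: reduce to off-axis points via Lemma~\ref{lemma-2.8}, then use fixed points on each coordinate axis together with the isometry condition to recover each coordinate of $(\mathbf{x})\alpha$. The only (cosmetic) difference is in the last step: the paper lets the axis point range over infinitely many values and argues that a linear polynomial in $p$ with infinitely many roots must vanish identically, whereas you subtract the squared distance equations for two consecutive fixed axis points --- both are valid and the computation checks out.
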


\begin{proof}
By Lemma~\ref{lemma-2.8} it is sufficiently to show that $(\mathbf{x})\alpha=\mathbf{x}$ for any $\mathbf{x}\in\operatorname{dom}\alpha\setminus (\vec{\mathbf{1}}_1\cup\cdots \cup\vec{\mathbf{1}}_n)$.

\smallskip

Suppose to the contrary that there exists $\mathbf{x}=(x_1,\ldots,x_n)\in\operatorname{dom}\alpha\setminus (\vec{\mathbf{1}}_1\cup\cdots \cup\vec{\mathbf{1}}_n)$ with $x_1,\ldots,x_n\neq1$ such that $(\mathbf{x})\alpha=(y_1,\ldots,y_n)\neq\mathbf{x}$. Lemma~\ref{lemma-2.6} implies that $(\mathbf{p}_1)\alpha=\mathbf{p}_1$ for all positive integers $p$ such that $\mathbf{p}_1\in\operatorname{dom}\alpha$. Then we have that
\begin{equation*}
  d(\mathbf{x},\mathbf{p}_1)=d((\mathbf{x})\alpha,(\mathbf{p}_1)\alpha) \quad \Longleftrightarrow
\end{equation*}
\begin{equation*}
 (x_1-p)^2+(x_2-1)^2+\cdots+(x_n-1)^2=(y_1-p)^2+(y_2-1)^2+\cdots+(y_n-1)^2  \quad \Longleftrightarrow
\end{equation*}
\begin{equation*}
 x_1^2-2x_1p+p^2+(x_2-1)^2+\cdots+(x_n-1)^2-y_1^2+2y_1p-p^2-(y_2-1)^2-\cdots-(y_n-1)^2=0  \quad \Longleftrightarrow
\end{equation*}
\begin{equation}\label{eq-2.7}
   2(y_1-x_1)p+(x_2-1)^2+\cdots+(x_n-1)^2+x_1^2-y_1^2-(y_2-1)^2-\cdots-(y_n-1)^2=0.
\end{equation}
Since the partial map $\alpha$ is cofinite equation \eqref{eq-2.7} has infinitely many solutions in $\mathbb{N}$ with the respect to the variable $p$. This implies that $y_1=x_1$ and
\begin{equation*}
  (x_2-1)^2+\cdots+(x_n-1)^2+x_1^2-y_1^2-(y_2-1)^2-\cdots-(y_n-1)^2=0.
\end{equation*}
 Similar arguments and Lemma~\ref{lemma-2.6} imply that $y_i=x_i$ for any other $i\in\{2,\ldots,n\}$.
\end{proof}

\section{Algebraic property of the semigroup $\mathbf{I}\mathbb{N}_{\infty}^n$}\label{section-3}

Theorem~\ref{theorem-2.9} implies the following corollary.

\begin{corollary}\label{corollary-3.1}
Let $n$ be any positive integer $\geqslant 2$ and $\alpha\colon \mathbb{N}^n\to \mathbb{N}^n$ be an isometry such that $(\mathbf{2}_i)\alpha=\mathbf{2}_i$ for all $i=1,\ldots,n$. Then $\alpha$ is the identity map of $\mathbb{N}^n$.
\end{corollary}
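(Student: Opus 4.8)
The plan is to derive this directly from Theorem~\ref{theorem-2.9}, so the work is almost entirely a matter of checking that the hypotheses of that theorem apply to the map $\alpha$ at hand. First I would observe that an isometry $\alpha\colon\mathbb N^n\to\mathbb N^n$ of the whole space is in particular a partial cofinite isometry of $\mathbb N^n$: its domain and range are both all of $\mathbb N^n$, so the complements $\mathbb N^n\setminus\operatorname{dom}\alpha$ and $\mathbb N^n\setminus\operatorname{ran}\alpha$ are empty, hence finite. Thus $\alpha\in\mathbf{I}\mathbb N_{\infty}^n$ and every statement proved for partial cofinite isometries is available for it.

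Next I would verify the structural hypothesis of Theorem~\ref{theorem-2.9}, namely that there is, for each $i\in\{1,\ldots,n\}$, a point $\mathbf{m}_i\in\operatorname{dom}\alpha\setminus\{\mathbf{1}\}$ with $(\mathbf{m}_i)\alpha\in\vec{\mathbf{1}}_i$. Taking $m=2$, the point $\mathbf{2}_i=(1,\ldots,1,\underbrace{2}_{i\text{-th}},1,\ldots,1)$ lies in $\vec{\mathbf{1}}_i$ by the very definition of $\vec{\mathbf{1}}_i$, it is distinct from $\mathbf{1}=(1,\ldots,1)$ since its $i$-th coordinate equals $2$, and it lies in $\operatorname{dom}\alpha=\mathbb N^n$. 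By hypothesis $(\mathbf{2}_i)\alpha=\mathbf{2}_i\in\vec{\mathbf{1}}_i$, so $\mathbf{m}_i=\mathbf{2}_i$ witnesses the required property for every $i$.

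Applying Theorem~\ref{theorem-2.9} then yields $(\mathbf{x})\alpha=\mathbf{x}$ for every $\mathbf{x}\in\operatorname{dom}\alpha=\mathbb N^n$, i.e.\ $\alpha=\mathbb I$ is the identity map of $\mathbb N^n$. There is essentially no obstacle here; the only points requiring a word of justification are the two just noted — that a globally defined isometry counts as a partial cofinite isometry, and that $\mathbf{2}_i\in\vec{\mathbf{1}}_i\setminus\{\mathbf{1}\}$ — after which the corollary is immediate from the theorem.
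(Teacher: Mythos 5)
Your proof is correct and matches the paper's intent exactly: the paper states this corollary as an immediate consequence of Theorem~\ref{theorem-2.9} with no further argument, and your verification that a globally defined isometry is a partial cofinite isometry and that $\mathbf{m}_i=\mathbf{2}_i$ witnesses the hypothesis for each $i$ is precisely the routine checking being left to the reader.
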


The following theorem describes the structure of the group of units $H(\mathbb{I})$ of the semigroup $\mathbf{I}\mathbb{N}_{\infty}^n$.

\begin{theorem}\label{theorem-3.2}
For any positive integer $n\geqslant 2$ the group of units $H(\mathbb{I})$ of the semigroup $\mathbf{I}\mathbb{N}_{\infty}^n$ is isomorphic to the group $\mathscr{S}_n$. Moreover, every element of $H(\mathbb{I})$ is induced by a permutation of the set $\{1,\ldots,n\}$.
\end{theorem}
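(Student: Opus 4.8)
The plan is to exhibit a group isomorphism $\Phi\colon \mathscr{S}_n \to H(\mathbb{I})$ and use the rigidity results of Section~\ref{section-2} to show it is onto. First I would note that by Proposition~\ref{proposition-2.1} each permutation $\sigma \in \mathscr{S}_n$ induces an isometry $\alpha_\sigma$ of $\mathbb{N}^n$ which permutes coordinates; since $\alpha_\sigma$ is a bijection of $\mathbb{N}^n$ with $\operatorname{dom}\alpha_\sigma = \operatorname{ran}\alpha_\sigma = \mathbb{N}^n$, it is trivially a cofinite isometry, hence lies in $\mathbf{I}\mathbb{N}_{\infty}^n$, and moreover it is a unit: its inverse in $\mathscr{I}_\omega$ is $\alpha_{\sigma^{-1}}$, so $\alpha_\sigma\alpha_{\sigma^{-1}} = \alpha_{\sigma^{-1}}\alpha_\sigma = \mathbb{I}$, giving $\alpha_\sigma \in H(\mathbb{I})$. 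A direct check of the composition of coordinate permutations shows $\alpha_\sigma\alpha_\tau = \alpha_{\sigma\tau}$ (being careful with the left/right action convention fixed in the paper), so $\Phi\colon \sigma \mapsto \alpha_\sigma$ is a group homomorphism into $H(\mathbb{I})$.

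Next I would verify that $\Phi$ is injective: if $\alpha_\sigma = \mathbb{I}$ then $\sigma$ fixes every coordinate, so $\sigma$ is the identity permutation; hence $\ker\Phi$ is trivial and $\Phi$ embeds $\mathscr{S}_n$ into $H(\mathbb{I})$. The substantive part is surjectivity. Let $\alpha \in H(\mathbb{I})$. Since $\alpha$ is a unit, both $\alpha$ and $\alpha^{-1}$ are defined on all of $\mathbb{N}^n$, so $\alpha$ is a bijective isometry of $\mathbb{N}^n$ onto itself. By Corollary~\ref{corollarry-2.3}, $(\mathbf{1})\alpha = \mathbf{1}$. Now apply Lemma~\ref{lemma-2.6}: for each $i$ there is a unique $j(i)$ and an integer $q(i)$ with $(\mathbf{m}_i)\alpha = \mathbf{(m+q(i))}_{j(i)}$ for all $\mathbf{m}_i \in \operatorname{dom}\alpha = \mathbb{N}^n$, and $i \mapsto j(i)$ is a permutation $\sigma$ of $\{1,\ldots,n\}$. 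Taking $m=1$ and using $(\mathbf{1})\alpha = \mathbf{1} = (\mathbf{1})_{j(i)}$ forces $q(i) = 0$ for every $i$; in particular $(\mathbf{2}_i)\alpha = \mathbf{2}_{\sigma(i)} = (\mathbf{2}_{\sigma(i)})\alpha_{\sigma}$ for all $i$.

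It remains to conclude that $\alpha = \alpha_\sigma$ on all of $\mathbb{N}^n$, not merely on the axes. Here I would consider the isometry $\beta = \alpha\,\alpha_{\sigma^{-1}} = \alpha\,\alpha_\sigma^{-1}$, which is again a bijective isometry of $\mathbb{N}^n$, and compute $(\mathbf{2}_i)\beta = \big((\mathbf{2}_i)\alpha\big)\alpha_\sigma^{-1} = (\mathbf{2}_{\sigma(i)})\alpha_{\sigma^{-1}} = \mathbf{2}_i$ for every $i=1,\ldots,n$. By Corollary~\ref{corollary-3.1}, $\beta$ is the identity map $\mathbb{I}$, whence $\alpha = \alpha_\sigma = \Phi(\sigma)$. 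This proves $\Phi$ is surjective, so $\Phi$ is an isomorphism $\mathscr{S}_n \cong H(\mathbb{I})$, and every element of $H(\mathbb{I})$ has the form $\alpha_\sigma$, i.e.\ is induced by a permutation of $\{1,\ldots,n\}$. The main obstacle is the step just described, namely upgrading the "$\alpha$ agrees with $\alpha_\sigma$ on the coordinate axes" information to agreement everywhere; this is exactly what Theorem~\ref{theorem-2.9} and its Corollary~\ref{corollary-3.1} are designed to deliver, so with those in hand the argument is essentially a bookkeeping of the left-action composition convention.
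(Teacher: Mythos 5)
Your proof is correct and follows essentially the same route as the paper: both arguments reduce surjectivity to showing that a unit $\alpha$ agrees with some $\alpha_\sigma$ on the points $\mathbf{2}_1,\ldots,\mathbf{2}_n$ and then apply Corollary~\ref{corollary-3.1} to $\alpha\alpha_\sigma^{-1}$. The only cosmetic difference is that the paper obtains the permutation $\sigma$ by observing that $\alpha$ fixes $\mathbf{1}$ and therefore permutes the unit sphere $\{\mathbf{2}_1,\ldots,\mathbf{2}_n\}$ about $\mathbf{1}$, whereas you extract $\sigma$ from Lemma~\ref{lemma-2.6} and kill the shifts $q(i)$ using Corollary~\ref{corollarry-2.3}; both work.
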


\begin{proof}
It is obvious that every element of the group of units $H(\mathbb{I})$ of the semigroup $\mathbf{I}\mathbb{N}_{\infty}^n$ is an isometry of $\mathbb{N}^n$.

\smallskip

Fix an arbitrary isometry $\alpha$ of $\mathbb{N}^n$. Since the set $\left\{\mathbf{y}\in\mathbb{N}^n \colon d(\mathbf{1},\mathbf{y})=1\right\}$ coincides with
\begin{equation*}
  \mathbf{D}_1(\mathbf{1})=\{(\underbrace{2}_{1\hbox{\footnotesize{-st}}},1,\ldots,1), (1,\underbrace{2}_{2\hbox{\footnotesize{-nd}}},\ldots,1), \ldots, (1,\ldots,\underbrace{2}_{n\hbox{\footnotesize{-th}}})\},
\end{equation*}
we have that $( \mathbf{D}_1(\mathbf{1}))\alpha= \mathbf{D}_1(\mathbf{1})$. Then there exists a permutation $\sigma\colon\{1,\ldots,n\}\to\{1,\ldots,n\}$ such that $(\mathbf{2}_i)\alpha=\mathbf{2}_{(i)\sigma}$ for any $i=1,\ldots,n$. By Proposition~\ref{proposition-2.1} the permutation $\sigma$ induces the isometry $\alpha_\sigma\colon\mathbb N^n\to\mathbb N^n$, $x\mapsto x\circ\alpha$ of the set $\mathbb{N}^n$. It is obvious that $(\mathbf{2}_i)\alpha\alpha_\sigma^{-1}=\mathbf{2}_i$ and $(\mathbf{2}_j)\alpha_\sigma^{-1}\alpha=\mathbf{2}_j$ for all $i,j=1,\ldots,n$. Since $\alpha\alpha_\sigma^{-1}$ and $\alpha_\sigma^{-1}\alpha$ are isometries of $\mathbb{N}^n$, Corollary~\ref{corollary-3.1} implies that $\alpha\alpha_\sigma^{-1}=\alpha_\sigma^{-1}\alpha=\mathbb{I}$. This implies that the isometry $\alpha$ is induced by some permutation of the set $\{1,\ldots,n\}$, which completes the proof of the theorem.
\end{proof}

\begin{proposition}\label{proposition-3.3}
Let $n$ be any positive integer $\geqslant 2$. Then the following statements hold.
\begin{itemize}
    \item[$(i)$] An element $\alpha$ of the semigroup $\mathbf{I}\mathbb{N}_{\infty}^n$ is an idempotent if and only if $(x)\alpha=x$ for every $x\in\operatorname{dom}\alpha$.

    \item[$(ii)$] If $\varepsilon,\iota\in E(\mathbf{I}\mathbb{N}_{\infty}^n)$, then $\varepsilon\leqslant\iota$ if and only if           $\operatorname{dom}\varepsilon\subseteq \operatorname{dom}\iota$.

    \item[$(iii)$] The semilattice $E(\mathbf{I}\mathbb{N}_{\infty}^n)$ is isomorphic to the free semilattice           $(\mathscr{P}_{\infty}(\mathbb{N}^n),\cup)$ under the mapping $(\varepsilon)h=\mathbb{N}\setminus           \operatorname{dom}\varepsilon$.

    \item[$(iv)$] Every maximal chain in $E(\mathbf{I}\mathbb{N}_{\infty}^n)$ is an $\omega$-chain.

    \item[$(v)$] $\mathbf{I}\mathbb{N}_{\infty}^n$ is an inverse semigroup.

    \item[$(vi)$] $\alpha\mathscr{R}\beta$ in $\mathbf{I}\mathbb{N}_{\infty}^n$ if and only if          $\operatorname{dom}\alpha=\operatorname{dom}\beta$.

    \item[$(vii)$] $\alpha\mathscr{L}\beta$ in $\mathbf{I}\mathbb{N}_{\infty}^n$ if and only if          $\operatorname{ran}\alpha=\operatorname{ran}\beta$.

    \item[$(viii)$] $\alpha\mathscr{H}\beta$ in $\mathbf{I}\mathbb{N}_{\infty}^n$ if and only if          $\operatorname{dom}\alpha=\operatorname{dom}\beta$ and $\operatorname{ran}\alpha=\operatorname{ran}\beta$.
\end{itemize}
\end{proposition}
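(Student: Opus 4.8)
The plan is to work throughout inside the symmetric inverse monoid $\mathscr{I}_\omega$, using two facts: $\mathbf{I}\mathbb{N}_{\infty}^n$ is a submonoid of $\mathscr{I}_\omega$ closed under the inversion of $\mathscr{I}_\omega$, and it contains the partial identity (written $\mathbb{I}_A$) on every cofinite subset $A$ of $\mathbb{N}^n$, since such a partial identity preserves distances. Statement $(v)$ is then immediate, because a subsemigroup of an inverse semigroup closed under inversion is itself inverse: each element has its $\mathscr{I}_\omega$-inverse inside it, and uniqueness of inverses is inherited. For $(i)$ I would recall that the idempotents of $\mathscr{I}_\omega$ are exactly the partial identities: if $\alpha^2=\alpha$ then $\operatorname{dom}\alpha^2=\operatorname{dom}\alpha$ forces $\operatorname{ran}\alpha\subseteq\operatorname{dom}\alpha$, and for $x\in\operatorname{dom}\alpha$ the equality $x\alpha^2=x\alpha$ together with the injectivity of $\alpha$ gives $x\alpha=x$; the converse is trivial. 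Hence $E(\mathbf{I}\mathbb{N}_{\infty}^n)=\{\mathbb{I}_A\colon A\subseteq\mathbb{N}^n\ \text{cofinite}\}$.

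For $(ii)$, by $(i)$ the product $\varepsilon\iota=\iota\varepsilon$ of idempotents equals $\mathbb{I}_{\operatorname{dom}\varepsilon\cap\operatorname{dom}\iota}$, which coincides with $\varepsilon$ exactly when $\operatorname{dom}\varepsilon\subseteq\operatorname{dom}\iota$. For $(iii)$, the assignment $(\varepsilon)h=\mathbb{N}^n\setminus\operatorname{dom}\varepsilon$ (here I read the $\mathbb{N}$ of the statement as $\mathbb{N}^n$) takes values in $\mathscr{P}_{\infty}(\mathbb{N}^n)$ because domains are cofinite, is onto since $(\mathbb{I}_{\mathbb{N}^n\setminus F})h=F$ for every finite $F$, is injective since an idempotent is determined by its domain, and sends $\varepsilon\iota$ to $(\mathbb{N}^n\setminus\operatorname{dom}\varepsilon)\cup(\mathbb{N}^n\setminus\operatorname{dom}\iota)$; hence it is a semilattice isomorphism onto $(\mathscr{P}_{\infty}(\mathbb{N}^n),\cup)$. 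For $(iv)$, note from $(ii)$ that $\varepsilon\leqslant\iota$ iff $\mathbb{N}^n\setminus\operatorname{dom}\iota\subseteq\mathbb{N}^n\setminus\operatorname{dom}\varepsilon$; since $\mathbb{N}^n\setminus\operatorname{dom}\varepsilon$ is finite, only finitely many idempotents lie above a given $\varepsilon$, so in any chain $C\subseteq E(\mathbf{I}\mathbb{N}_{\infty}^n)$ each member has only finitely many members of $C$ above it, which forces $C$ to be order isomorphic to $\{-1,\dots,-N\}$ or to the whole chain of negative integers. A maximal $C$ contains the greatest idempotent $\mathbb{I}$ and has no least element — given $\varepsilon\in C$ and $a\in\operatorname{dom}\varepsilon$, the idempotent $\mathbb{I}_{\operatorname{dom}\varepsilon\setminus\{a\}}$ lies strictly below $\varepsilon$ and is comparable with every member of $C$ — hence $C$ is infinite, i.e. an $\omega$-chain.

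For $(vi)$, if $\alpha\mathscr{R}\beta$ in $\mathbf{I}\mathbb{N}_{\infty}^n$ then $\beta=\alpha\gamma$ and $\alpha=\beta\delta$ for some $\gamma,\delta\in(\mathbf{I}\mathbb{N}_{\infty}^n)^1$, which gives $\operatorname{dom}\beta\subseteq\operatorname{dom}\alpha\subseteq\operatorname{dom}\beta$; conversely, if $\operatorname{dom}\alpha=\operatorname{dom}\beta$ then $\gamma=\alpha^{-1}\beta$ and $\delta=\beta^{-1}\alpha$ are again partial cofinite isometries, and since $\alpha\alpha^{-1}=\mathbb{I}_{\operatorname{dom}\alpha}=\mathbb{I}_{\operatorname{dom}\beta}$ we get $\alpha\gamma=\alpha\alpha^{-1}\beta=\beta$ and $\beta\delta=\alpha$, so $\alpha\mathscr{R}\beta$. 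Statement $(vii)$ is dual, using $\alpha^{-1}\alpha=\mathbb{I}_{\operatorname{ran}\alpha}$ and the witnesses $\beta\alpha^{-1}$, $\alpha\beta^{-1}$, and $(viii)$ follows from $\mathscr{H}=\mathscr{R}\cap\mathscr{L}$ together with $(vi)$ and $(vii)$.

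The only place that needs genuine care is the interplay between $\mathscr{I}_\omega$ and its submonoid: the domain/range descriptions of $\mathscr{R}$, $\mathscr{L}$, $\mathscr{H}$ are valid for $\mathbf{I}\mathbb{N}_{\infty}^n$ precisely because the witnesses $\alpha^{-1}\beta$, $\beta^{-1}\alpha$ (and their $\mathscr{L}$-analogues) are themselves partial cofinite isometries and so remain inside the semigroup. The combinatorial claim in $(iv)$ about the order type of a maximal chain of cofinite-complement partial identities is the only assertion not reducible to the routine idempotent calculus in $\mathscr{I}_\omega$, and it is settled by the finiteness-of-complements remark above; everything else is bookkeeping.
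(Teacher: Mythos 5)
Your argument is correct and follows essentially the same route as the paper, which simply declares $(i)$--$(iv)$ trivial, derives $(v)$ from closure under inversion, and reads $(vi)$--$(viii)$ off the standard description of Green's relations in the symmetric inverse monoid (Lawson, Proposition~3.2.11); you merely spell out the routine verifications, including the one point that genuinely matters, namely that the witnesses $\alpha^{-1}\beta$, $\beta\alpha^{-1}$, etc.\ stay inside $\mathbf{I}\mathbb{N}_{\infty}^n$. The only cosmetic slip is in $(iv)$: the idempotent $\mathbb{I}_{\operatorname{dom}\varepsilon\setminus\{a\}}$ is comparable with every member of $C$ only when $\varepsilon$ is the least element of $C$, which is the case you actually need, so the argument stands.
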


\begin{proof}
Statements $(i)-(iv)$ are trivial and their proofs follow from the definition of the semigroup $\mathbf{I}\mathbb{N}_{\infty}^n$.

\smallskip

$(v)$ Fix an arbitrary $\alpha\in\mathbf{I}\mathbb{N}_{\infty}^n$. Since $\alpha$ is a partial cofinite isometry of $\mathbb{N}^n$, we conclude that so is its inverse $\alpha^{-1}$. This implies that $\mathbf{I}\mathbb{N}_{\infty}^n$ is regular, and hence by statement $(iii)$ and the Wagner-Preston Theorem (see \cite[Theorem~1.17]{Clifford-Preston-1961-1967}), $\mathbf{I}\mathbb{N}_{\infty}^n$ is an inverse semigroup.

\smallskip

Statements $(vi)-(viii)$ follow from the description of Green's relations $\mathscr{R}$, $\mathscr{L}$ and $\mathscr{H}$ on the symmetric inverse monoid $\mathscr{I}_\omega$ and Proposition~3.2.11 of \cite{Lawson-1998}.
\end{proof}

\begin{lemma}\label{lemma-3.4}
Let $n$ be any positive integer $\geqslant 2$ and  $\alpha$ be an arbitrary element of the semigroup $\mathbf{I}\mathbb{N}_{\infty}^n$. Then there exist unique elements $\sigma_l$ and $\sigma_r$ of the group of units $H(\mathbb{I})$ of the semigroup $\mathbf{I}\mathbb{N}_{\infty}^n$ such that $\sigma_l\alpha$ and $\alpha\sigma_r$ are idempotents in $\mathbf{I}\mathbb{N}_{\infty}^n$.
\end{lemma}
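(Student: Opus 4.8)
The plan is to construct $\sigma_l$ and $\sigma_r$ explicitly out of the permutation that $\alpha$ carries by Lemma~\ref{lemma-2.6}, reduce to idempotency via Proposition~\ref{proposition-3.3}$(i)$ with the help of Theorem~\ref{theorem-2.9}, and then obtain uniqueness from the observation that no non-identity element of $H(\mathbb{I})$ fixes a cofinite subset of $\mathbb{N}^n$ pointwise.

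\textbf{Existence.} By Lemma~\ref{lemma-2.6}, $\alpha$ carries a permutation $i\mapsto j(i)$ of $\{1,\dots,n\}$ together with integers $q(i)$ so that $(\mathbf{m}_i)\alpha=\mathbf{(m+q(i))}_{j(i)}$ for every $\mathbf{m}_i\in\operatorname{dom}\alpha$. Since $H(\mathbb{I})\cong\mathscr{S}_n$ by Theorem~\ref{theorem-3.2}, there is exactly one $\vartheta\in H(\mathbb{I})$ that sends each axis $\vec{\mathbf{1}}_i$ onto $\vec{\mathbf{1}}_{j(i)}$ (the isometry of Proposition~\ref{proposition-2.1} induced by $i\mapsto j(i)$, up to taking its inverse according to the convention used there); then $\vartheta^{-1}\in H(\mathbb{I})$ sends $\vec{\mathbf{1}}_{j(i)}$ onto $\vec{\mathbf{1}}_i$. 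I would take $\sigma_l=\sigma_r=\vartheta^{-1}$. Because $\vartheta^{-1}$ is a bijection of $\mathbb{N}^n$, both $\alpha\vartheta^{-1}$ and $\vartheta^{-1}\alpha$ lie in $\mathbf{I}\mathbb{N}_\infty^n$, with $\operatorname{dom}(\alpha\vartheta^{-1})=\operatorname{dom}\alpha$ and $\operatorname{dom}(\vartheta^{-1}\alpha)=(\operatorname{dom}\alpha)\vartheta$, both cofinite. A short computation, composing $\alpha$ with $\vartheta^{-1}$ and tracking where the axes go, shows that for each $i$ and every sufficiently large $m$ (so that the relevant points of the axes lie in the domains involved, and $\mathbf{m}_i\neq\mathbf{1}$)
\[
(\mathbf{m}_i)(\alpha\vartheta^{-1})\in\vec{\mathbf{1}}_i \qquad\text{and}\qquad (\mathbf{m}_i)(\vartheta^{-1}\alpha)\in\vec{\mathbf{1}}_i .
\]
Hence both $\alpha\vartheta^{-1}$ and $\vartheta^{-1}\alpha$ satisfy the hypothesis of Theorem~\ref{theorem-2.9}, so each acts as the identity map on its domain, and therefore, by Proposition~\ref{proposition-3.3}$(i)$, each is an idempotent of $\mathbf{I}\mathbb{N}_\infty^n$. (This simultaneously shows that $\sigma_l$ and $\sigma_r$ may be chosen equal.)

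\textbf{Uniqueness.} Suppose $\sigma_l,\sigma_l'\in H(\mathbb{I})$ and $\sigma_l\alpha=\varepsilon$, $\sigma_l'\alpha=\varepsilon'$ are both idempotents. Left-multiplying by inverses in the group $H(\mathbb{I})$ gives $\alpha=\sigma_l^{-1}\varepsilon=(\sigma_l')^{-1}\varepsilon'$, whence $\tau\varepsilon=\varepsilon'$ for $\tau=\sigma_l'\sigma_l^{-1}\in H(\mathbb{I})$. Since $\tau$ is total and $\varepsilon$ is the identity map on $\operatorname{dom}\varepsilon$, the partial map $\tau\varepsilon$ is $x\mapsto(x)\tau$ on the cofinite set $(\operatorname{dom}\varepsilon)\tau^{-1}$; equating it with $\varepsilon'$ forces $(x)\tau=x$ for every $x$ in the cofinite set $\operatorname{dom}\varepsilon'$. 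By Theorem~\ref{theorem-3.2}, $\tau$ is induced by a permutation $\pi$ of $\{1,\dots,n\}$, and if $\pi\neq\mathrm{id}$ its fixed-point set is contained in a ``diagonal'' set $\{\mathbf{x}\in\mathbb{N}^n\colon x_a=x_b\}$ with $a\neq b$, which is not cofinite in $\mathbb{N}^n$ for $n\geqslant 2$. Thus $\pi=\mathrm{id}$, $\tau=\mathbb{I}$, and $\sigma_l'=\sigma_l$. For $\sigma_r$ the argument is symmetric: from $\alpha\sigma_r=\varepsilon$ and $\alpha\sigma_r'=\varepsilon'$ one gets $\varepsilon=\varepsilon'\tau$ with $\tau=(\sigma_r')^{-1}\sigma_r$, and the same diagonal-set observation yields $\tau=\mathbb{I}$, so $\sigma_r'=\sigma_r$.

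The only genuinely delicate point is the bookkeeping in the existence step: one has to be careful about the direction in which $\vartheta$ (equivalently, the isometry $\alpha_\sigma$ of Proposition~\ref{proposition-2.1}) permutes the coordinate axes relative to the permutation $i\mapsto j(i)$ of Lemma~\ref{lemma-2.6}, so that $\alpha\vartheta^{-1}$ and $\vartheta^{-1}\alpha$ really stabilise each axis $\vec{\mathbf{1}}_i$ and Theorem~\ref{theorem-2.9} applies. Once that is in place, the passage to idempotency through Proposition~\ref{proposition-3.3}$(i)$ and the whole uniqueness argument are routine.
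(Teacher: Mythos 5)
Your proof is correct and follows essentially the same route as the paper: extract the permutation from Lemma~\ref{lemma-2.6}, cancel it with the induced unit of $H(\mathbb{I})$, and invoke Theorem~\ref{theorem-2.9} together with Proposition~\ref{proposition-3.3}$(i)$ to get idempotency. You tighten the argument in two minor places --- you note directly that the same element $\vartheta^{-1}$ serves as both $\sigma_l$ and $\sigma_r$ (the paper instead passes through $\alpha^{-1}$ and takes inverses for the left factor), and you spell out the uniqueness step, which the paper dismisses with a one-line appeal to Lemma~\ref{lemma-2.6}, via the correct observation that a unit induced by a non-identity permutation cannot fix a cofinite subset of $\mathbb{N}^n$ pointwise.
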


\begin{proof}
Suppose that $\sigma\colon \{1,\ldots,n\}\to \{1,\ldots,n\}, \; i\mapsto j(i)$ is a permutation defined in Lemma~\ref{lemma-2.6} for the partial isometry $\alpha$. We put $\sigma_{\alpha}$ is an element of $H(\mathbb{I})$ which is induced by the permutation $\sigma$ of the set $\{1,\ldots,n\}$ (see Theorem~\ref{theorem-3.2}) and $\sigma_{\alpha}^{-1}$ is its inverse. Then the assumption of Theorem~\ref{theorem-2.9} holds for $\alpha\sigma_\alpha^{-1}$, and hence by Proposition~\ref{proposition-3.3}$(i)$, $\alpha\sigma_{\alpha}^{-1}$ is an idempotent of $\mathbf{I}\mathbb{N}_{\infty}^n$. Next we put $\sigma_r=\sigma_{\alpha}^{-1}$.

\smallskip

By a similar way we suppose that $\overline{\sigma}\colon \{1,\ldots,n\}\to \{1,\ldots,n\}, \; i\mapsto j(i)$ is a permutation defined in Lemma~\ref{lemma-2.6} for $\alpha^{-1}$, i.e., $\overline{\sigma}$ is inverse of $\sigma$. We put $\overline{\sigma}_{\alpha}$ is an element of $H(\mathbb{I})$ which is induced by the permutation $\overline{\sigma}$ of the set $\{1,\ldots,n\}$  and $\overline{\sigma}_{\alpha}^{-1}$ is its inverse. Then by Proposition~\ref{proposition-3.3}$(i)$ the element $\alpha^{-1}\overline{\sigma}_{\alpha}^{-1}$ is an idempotent of $\mathbf{I}\mathbb{N}_{\infty}^n$, and hence so is its inverse $(\alpha^{-1}\overline{\sigma}_{\alpha}^{-1})^{-1}=\overline{\sigma}_{\alpha}\alpha$, because by Proposition~\ref{proposition-3.3}$(v)$ the semigroup $\mathbf{I}\mathbb{N}_{\infty}^n$ is inverse. Hence, we put $\sigma_l=\sigma_{\alpha}$.

We observe that the uniqueness of the elements   $\sigma_l$ and $\sigma_r$ follows from Lemma~\ref{lemma-2.6}.
\end{proof}

\begin{lemma}\label{lemma-3.5}
Let $n$ be any positive integer $\geqslant 2$ and $\alpha$ be an arbitrary element of the semigroup $\mathbf{I}\mathbb{N}_{\infty}^n$. Then there exists the unique element $\sigma_{\alpha}$ of the group of units $H(\mathbb{I})$ and the unique idempotents $\varepsilon_{l(\alpha)}$ and $\varepsilon_{r(\alpha)}$ of the semigroup $\mathbf{I}\mathbb{N}_{\infty}^n$ such that
\begin{equation*}
\alpha=\sigma_{\alpha}\varepsilon_{l(\alpha)}=\varepsilon_{r(\alpha)}\sigma_{\alpha}.
\end{equation*}
\end{lemma}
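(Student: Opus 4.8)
The plan is to build on Lemma~\ref{lemma-3.4}, which already produces units $\sigma_l,\sigma_r\in H(\mathbb{I})$ with $\sigma_l\alpha$ and $\alpha\sigma_r$ idempotent, and then to repackage this data in the required left/right form. First I would set $\sigma_{\alpha}:=\sigma_l^{-1}$, which lies in $H(\mathbb{I})$ since $H(\mathbb{I})$ is a group (Theorem~\ref{theorem-3.2}), and define $\varepsilon_{r(\alpha)}:=\sigma_l\alpha\in E(\mathbf{I}\mathbb{N}_{\infty}^n)$. Multiplying $\varepsilon_{r(\alpha)}=\sigma_l\alpha$ on the left by $\sigma_l^{-1}=\sigma_{\alpha}$ gives $\alpha=\sigma_{\alpha}\varepsilon_{r(\alpha)}$; rename this idempotent $\varepsilon_{l(\alpha)}$ so that $\alpha=\sigma_{\alpha}\varepsilon_{l(\alpha)}$. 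Symmetrically, from $\alpha\sigma_r\in E(\mathbf{I}\mathbb{N}_{\infty}^n)$ and $\sigma_r\in H(\mathbb{I})$ I would set $\varepsilon_{r(\alpha)}:=\alpha\sigma_r$ and obtain $\alpha=\varepsilon_{r(\alpha)}\sigma_r^{-1}$.

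The one genuine point requiring care is that the unit appearing on the left in $\alpha=\sigma_{\alpha}\varepsilon_{l(\alpha)}$ and the unit appearing on the right in $\alpha=\varepsilon_{r(\alpha)}\sigma_{\alpha}$ are \emph{the same} element $\sigma_{\alpha}$, i.e.\ that $\sigma_l^{-1}=\sigma_r^{-1}$, equivalently $\sigma_l=\sigma_r$. This is exactly the content of Lemma~\ref{lemma-2.6}: the permutation $i\mapsto j(i)$ of $\{1,\ldots,n\}$ attached to $\alpha$ is intrinsic to $\alpha$, and by the construction in the proof of Lemma~\ref{lemma-3.4} both $\sigma_l$ and $\sigma_r^{-1}$ are the element of $H(\mathbb{I})$ induced by this single permutation (the left factor is built from the permutation of $\alpha^{-1}\overline{\sigma}_\alpha^{-1}$... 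I will instead argue more directly). Concretely: by Lemma~\ref{lemma-2.6} write $\sigma\in\mathscr{S}_n$ for the permutation induced by $\alpha$ and let $\widehat{\sigma}\in H(\mathbb{I})$ be the isometry it induces (Theorem~\ref{theorem-3.2}). Then $\alpha\widehat{\sigma}^{-1}$ sends each axis $\vec{\mathbf{1}}_i$ into itself, so by Theorem~\ref{theorem-2.9} and Proposition~\ref{proposition-3.3}$(i)$ it is an idempotent $\varepsilon_{l(\alpha)}$; likewise the permutation induced by $\alpha^{-1}$ is $\sigma^{-1}$, so $\widehat{\sigma}\alpha=(\alpha^{-1}\widehat{\sigma}^{-1})^{-1}$ is an idempotent $\varepsilon_{r(\alpha)}$ because $\mathbf{I}\mathbb{N}_{\infty}^n$ is inverse (Proposition~\ref{proposition-3.3}$(v)$). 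Setting $\sigma_{\alpha}:=\widehat{\sigma}$ then yields $\alpha=\sigma_{\alpha}\varepsilon_{l(\alpha)}=\varepsilon_{r(\alpha)}\sigma_{\alpha}$ with a single unit on both sides.

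Finally I would verify uniqueness. If $\alpha=\tau\varepsilon=\varepsilon'\tau$ with $\tau\in H(\mathbb{I})$ and $\varepsilon,\varepsilon'\in E(\mathbf{I}\mathbb{N}_{\infty}^n)$, then $\tau^{-1}\alpha=\varepsilon$ is an idempotent, so $\tau^{-1}\alpha$ fixes its domain pointwise (Proposition~\ref{proposition-3.3}$(i)$); applying Lemma~\ref{lemma-2.6} to $\alpha$ shows that on the axes $\alpha$ acts as $\mathbf{m}_i\mapsto\mathbf{(m+q(i))}_{j(i)}$ for the permutation $i\mapsto j(i)$, and the only unit that can turn this into an axis-preserving map fixing points is the one induced by that permutation, i.e.\ $\tau=\sigma_{\alpha}$. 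Hence $\varepsilon=\sigma_{\alpha}^{-1}\alpha=\varepsilon_{l(\alpha)}$ and $\varepsilon'=\alpha\sigma_{\alpha}^{-1}=\varepsilon_{r(\alpha)}$. The only obstacle worth flagging is the bookkeeping that matches the ``left'' and ``right'' units in Lemma~\ref{lemma-3.4}; once that identification is made via the intrinsic permutation of Lemma~\ref{lemma-2.6}, the statement follows from Theorem~\ref{theorem-2.9} and Proposition~\ref{proposition-3.3} with no further computation.
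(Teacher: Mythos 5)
Your overall strategy is essentially the paper's: extract from Lemma~\ref{lemma-2.6} the single permutation intrinsic to $\alpha$, let $\widehat{\sigma}\in H(\mathbb{I})$ be the unit it induces (Theorem~\ref{theorem-3.2}), observe that composing $\alpha$ with $\widehat{\sigma}^{-1}$ on the appropriate side gives an axis-preserving partial isometry, hence an idempotent by Theorem~\ref{theorem-2.9} and Proposition~\ref{proposition-3.3}$(i)$, and get uniqueness because a unit is determined by its permutation. (The paper establishes the coincidence of the left and right units by a contradiction argument --- a non-identity unit is non-identity outside every box $\mathbf{C}_m$ --- but this rests on the same fact, and your direct identification via the intrinsic permutation is arguably cleaner.)

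There is, however, a concrete left/right error in the assembly which makes the written argument fail for $n\geqslant 3$. With the paper's left-to-right composition, if $\alpha$ sends the $i$-th axis to the $j(i)$-th axis and $\widehat{\sigma}$ does the same, then $\alpha\widehat{\sigma}^{-1}$ and $\widehat{\sigma}^{-1}\alpha$ are axis-preserving, whereas $\widehat{\sigma}\alpha$ sends axis $i$ to axis $j(j(i))$ and is therefore \emph{not} an idempotent unless $j$ is an involution (automatic for $n=2$, false in general for $n\geqslant 3$). The slip originates in your application of the recipe to $\alpha^{-1}$: since $\alpha^{-1}$ induces $j^{-1}$, its associated unit is $\widehat{\sigma}^{-1}$, so the resulting idempotent is $\alpha^{-1}\bigl(\widehat{\sigma}^{-1}\bigr)^{-1}=\alpha^{-1}\widehat{\sigma}$, whose inverse is $\widehat{\sigma}^{-1}\alpha$ --- not $\bigl(\alpha^{-1}\widehat{\sigma}^{-1}\bigr)^{-1}=\widehat{\sigma}\alpha$ as you wrote. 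Consequently, with your assignments $\varepsilon_{l(\alpha)}=\alpha\widehat{\sigma}^{-1}$ and $\varepsilon_{r(\alpha)}=\widehat{\sigma}\alpha$ the final display does not hold: $\sigma_{\alpha}\varepsilon_{l(\alpha)}=\widehat{\sigma}\alpha\widehat{\sigma}^{-1}\neq\alpha$ in general. The correct bookkeeping is $\varepsilon_{l(\alpha)}=\widehat{\sigma}^{-1}\alpha=\alpha^{-1}\alpha$ and $\varepsilon_{r(\alpha)}=\alpha\widehat{\sigma}^{-1}=\alpha\alpha^{-1}$, which gives $\alpha=\widehat{\sigma}\bigl(\widehat{\sigma}^{-1}\alpha\bigr)=\bigl(\alpha\widehat{\sigma}^{-1}\bigr)\widehat{\sigma}$ as required and matches the paper's identities \eqref{eq-3.1}. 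Once these labels are corrected, the rest of your argument, including the uniqueness part, goes through.
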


\begin{proof}
By Lemma~\ref{lemma-3.4} there exists unique elements  $\sigma_{l_{\alpha}}$ and $\sigma_{r_{\alpha}}$  of the group of units $H(\mathbb{I})$ such that $\sigma_{l_{\alpha}}\alpha$ and $\alpha\sigma_{r_{\alpha}}$ are idempotents of the semigroup $\mathbf{I}\mathbb{N}_{\infty}^n$. This implies that
\begin{equation}\label{eq-3.1}
\sigma_{l_{\alpha}}\alpha=\alpha^{-1}\alpha \qquad \hbox{and} \qquad \alpha\sigma_{r_{\alpha}}=\alpha\alpha^{-1},
\end{equation}
because $\mathbf{I}\mathbb{N}_{\infty}^n$ is an inverse semigroup, $\operatorname{ran}(\sigma_{l_{\alpha}}\alpha)=\operatorname{ran}(\alpha^{-1}\alpha)$ and $\operatorname{dom}(\alpha\sigma_{r_{\alpha}})=\operatorname{dom}(\alpha\alpha^{-1})$. Hence we get that
\begin{equation*}
\alpha=\sigma_{l_{\alpha}}\sigma_{l_{\alpha}}\alpha=\sigma_{l_{\alpha}}\alpha^{-1}\alpha \qquad \hbox{and} \qquad \alpha=\alpha\sigma_{r_{\alpha}}\sigma_{r_{\alpha}}^{-1}=\alpha\alpha^{-1}\sigma_{r_{\alpha}}^{-1}.
\end{equation*}
We put
\begin{equation*}
  \sigma_{l(\alpha)}=\sigma_{l_{\alpha}}, \qquad \sigma_{r(\alpha)}=\sigma_{r_{\alpha}}^{-1}, \qquad \varepsilon_{l(\alpha)}=\alpha^{-1}\alpha, \qquad \hbox{and} \qquad \varepsilon_{r(\alpha)}=\alpha\alpha^{-1}.
\end{equation*}
We claim that $\sigma_{l(\alpha)}=\sigma_{r(\alpha)}$. Indeed, the inequality $\sigma_{l(\alpha)}\neq\sigma_{r(\alpha)}$ and Theorem~\ref{theorem-3.2} imply that the restriction $\sigma_{l(\alpha)}^{-1}\sigma_{r(\alpha)}|_{\mathbb{N}^n\setminus \mathbf{C}_m}\colon \mathbb{N}^n\setminus \mathbf{C}_m\to \mathbb{N}^n\setminus \mathbf{C}_m$ is not the identity map for any positive integer $m$. This implies that $\varepsilon_{l(\alpha)}\neq\sigma_{l(\alpha)}^{-1}\varepsilon_{r(\alpha)}\sigma_{r(\alpha)}$, which contradicts the equalities $\alpha=\sigma_{l(\alpha)}\varepsilon_{l(\alpha)}=\varepsilon_{r(\alpha)}\sigma_{r(\alpha)}$. Hence $\sigma_{l(\alpha)}=\sigma_{r(\alpha)}$ and we put $\sigma_{\alpha}=\sigma_{r(\alpha)}$.

\smallskip

The uniqueness of the elements $\sigma_{\alpha}$, $\varepsilon_{l(\alpha)}$ and $\varepsilon_{r(\alpha)}$ follows from Lemma~\ref{lemma-2.6} and equalities \eqref{eq-3.1}, because $\mathbf{I}\mathbb{N}_{\infty}^n$ is an inverse semigroup.
\end{proof}

Lemma~\ref{lemma-3.5} implies the following corollary.

\begin{corollary}
For any positive integer $n\geqslant 2$ the subset $H(\mathbb{I})\cup E(\mathbf{I}\mathbb{N}_{\infty}^n)\subset\mathbf{I}\mathbb{N}_{\infty}^n$ is a set of generators of $\mathbf{I}\mathbb{N}_{\infty}^n$. Moreover, so is any subset $A=A_1\sqcup A_2\subset\mathbf{I}\mathbb{N}_{\infty}^n$ such that $A_1$ generates the group $H(\mathbb{I})$ and $A_2$ generates the set $E(\mathbf{I}\mathbb{N}_{\infty}^n)$.
\end{corollary}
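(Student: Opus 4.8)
The plan is to read off both assertions directly from the decomposition established in Lemma~\ref{lemma-3.5}. First I would fix an arbitrary element $\alpha\in\mathbf{I}\mathbb{N}_{\infty}^n$. By Lemma~\ref{lemma-3.5} there are an element $\sigma_{\alpha}\in H(\mathbb{I})$ and an idempotent $\varepsilon_{l(\alpha)}\in E(\mathbf{I}\mathbb{N}_{\infty}^n)$ with $\alpha=\sigma_{\alpha}\varepsilon_{l(\alpha)}$. Since $\sigma_{\alpha}\in H(\mathbb{I})\subseteq H(\mathbb{I})\cup E(\mathbf{I}\mathbb{N}_{\infty}^n)$ and $\varepsilon_{l(\alpha)}\in E(\mathbf{I}\mathbb{N}_{\infty}^n)\subseteq H(\mathbb{I})\cup E(\mathbf{I}\mathbb{N}_{\infty}^n)$, the element $\alpha$ is a product of two elements of $H(\mathbb{I})\cup E(\mathbf{I}\mathbb{N}_{\infty}^n)$; as $\alpha$ was arbitrary, this proves the first statement. (One may remark that the dual decomposition $\alpha=\varepsilon_{r(\alpha)}\sigma_{\alpha}$ of Lemma~\ref{lemma-3.5} gives the slightly stronger equalities $\mathbf{I}\mathbb{N}_{\infty}^n=H(\mathbb{I})\cdot E(\mathbf{I}\mathbb{N}_{\infty}^n)=E(\mathbf{I}\mathbb{N}_{\infty}^n)\cdot H(\mathbb{I})$.)

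For the second statement I would again write $\alpha=\sigma_{\alpha}\varepsilon_{l(\alpha)}$ via Lemma~\ref{lemma-3.5} and then replace each of the two factors by a semigroup word in $A$. By Theorem~\ref{theorem-3.2} the group $H(\mathbb{I})$ is finite (isomorphic to $\mathscr{S}_n$), so the subsemigroup of $H(\mathbb{I})$ generated by $A_1$ coincides with the subgroup generated by $A_1$, which by hypothesis is all of $H(\mathbb{I})$; hence $\sigma_{\alpha}$ is a product of elements of $A_1$. Likewise, since $E(\mathbf{I}\mathbb{N}_{\infty}^n)$ is a semilattice (a commutative band, so any product of its elements is again an idempotent) and $A_2$ generates it, the idempotent $\varepsilon_{l(\alpha)}=\alpha^{-1}\alpha$ is a product of elements of $A_2$. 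Concatenating these two products exhibits $\alpha$ as a word in $A=A_1\sqcup A_2$, so $A$ generates $\mathbf{I}\mathbb{N}_{\infty}^n$.

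There is essentially no obstacle: all of the content is already contained in Lemma~\ref{lemma-3.5}, and the corollary is just its bookkeeping. The only point that deserves a line of care is the passage from ``$A_1$ generates the group $H(\mathbb{I})$'' to ``every element of $H(\mathbb{I})$ is a semigroup word in $A_1$'', which could fail for an infinite group but is automatic here because $H(\mathbb{I})\cong\mathscr{S}_n$ is finite; everything else is a direct substitution.
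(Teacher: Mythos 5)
Your proof is correct and follows exactly the route the paper intends: the paper offers no separate argument, simply noting that the corollary is an immediate consequence of Lemma~3.5, and your write-up is the natural fleshing-out of that, including the worthwhile observation that finiteness of $H(\mathbb{I})\cong\mathscr{S}_n$ makes semigroup words in $A_1$ suffice to recover the whole group.
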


The following theorem describes the natural partial order on the semigroups $\mathbf{I}\mathbb{N}_{\infty}^n$ in the terms of Lemma~\ref{lemma-3.5}.

\begin{theorem}\label{theorem-3.6}
Let $n$ be any positive integer $\geqslant 2$.
Let $\alpha$ and $\beta$ be elements of the semigroup $\mathbf{I}\mathbb{N}_{\infty}^n$.
Let
\begin{equation*}
\alpha=\sigma_{\alpha}\varepsilon_{l(\alpha)}=\varepsilon_{r(\alpha)}\sigma_{\alpha} \qquad \hbox{and} \qquad \beta=\sigma_{\beta}\varepsilon_{l(\beta)}=\varepsilon_{r(\beta)}\sigma_{\beta}
\end{equation*}
for some elements $\sigma_{\alpha}$ and $\sigma_{\beta}$ of the group of units $H(\mathbb{I})$ and idempotents $\varepsilon_{l(\alpha)}$, $\varepsilon_{r(\alpha)}$, $\varepsilon_{l(\beta)}$ and $\varepsilon_{r(\beta)}$ of the semigroup $\mathbf{I}\mathbb{N}_{\infty}^n$. Then $\alpha\preccurlyeq\beta$ in $\mathbf{I}\mathbb{N}_{\infty}^n$ if and only if $\sigma_{\alpha}=\sigma_{\beta}$, $\varepsilon_{l(\alpha)}\preccurlyeq\varepsilon_{l(\beta)}$ and $\varepsilon_{r(\alpha)}\preccurlyeq\varepsilon_{r(\beta)}$ in $E(\mathbf{I}\mathbb{N}_{\infty}^n)$.
\end{theorem}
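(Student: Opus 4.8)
The plan is to split the equivalence into its two implications, each handled by routine manipulations with the natural partial order in an inverse semigroup together with one structural input: the rigidity of the group of units $H(\mathbb{I})\cong\mathscr{S}_n$ provided by Theorem~\ref{theorem-3.2}. Throughout I would use that, by Lemma~\ref{lemma-3.5} and its proof, the idempotents in the canonical factorisation are $\varepsilon_{l(\alpha)}=\alpha^{-1}\alpha$, $\varepsilon_{r(\alpha)}=\alpha\alpha^{-1}$ and $\varepsilon_{l(\beta)}=\beta^{-1}\beta$, $\varepsilon_{r(\beta)}=\beta\beta^{-1}$, together with the standard facts that in any inverse semigroup $s\preccurlyeq t$ is equivalent to $s=ts^{-1}s$ and that $s\preccurlyeq t$ implies $s^{-1}s\preccurlyeq t^{-1}t$ and $ss^{-1}\preccurlyeq tt^{-1}$ (see \cite[Section~1.4]{Lawson-1998}).

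For the ``if'' part I would assume $\sigma_{\alpha}=\sigma_{\beta}$, $\varepsilon_{l(\alpha)}\preccurlyeq\varepsilon_{l(\beta)}$ and $\varepsilon_{r(\alpha)}\preccurlyeq\varepsilon_{r(\beta)}$. Since $E(\mathbf{I}\mathbb{N}_{\infty}^n)$ is a semilattice, the first inequality reads $\varepsilon_{l(\beta)}\varepsilon_{l(\alpha)}=\varepsilon_{l(\alpha)}$, so $\beta\varepsilon_{l(\alpha)}=\sigma_{\beta}\varepsilon_{l(\beta)}\varepsilon_{l(\alpha)}=\sigma_{\beta}\varepsilon_{l(\alpha)}=\sigma_{\alpha}\varepsilon_{l(\alpha)}=\alpha$; as $\varepsilon_{l(\alpha)}\in E(\mathbf{I}\mathbb{N}_{\infty}^n)$, this is precisely $\alpha\preccurlyeq\beta$. (Only the inequality on the left idempotents is actually used here; once $\sigma_{\alpha}=\sigma_{\beta}$, conjugation by $\sigma_{\alpha}$ shows the two idempotent inequalities are in fact equivalent.)

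For the ``only if'' part I would assume $\alpha\preccurlyeq\beta$. The idempotent inequalities are then immediate: $\varepsilon_{l(\alpha)}=\alpha^{-1}\alpha\preccurlyeq\beta^{-1}\beta=\varepsilon_{l(\beta)}$ and $\varepsilon_{r(\alpha)}=\alpha\alpha^{-1}\preccurlyeq\beta\beta^{-1}=\varepsilon_{r(\beta)}$. To get $\sigma_{\alpha}=\sigma_{\beta}$ I would write $\alpha=\beta\alpha^{-1}\alpha=\beta\varepsilon_{l(\alpha)}=\sigma_{\beta}\varepsilon_{l(\beta)}\varepsilon_{l(\alpha)}=\sigma_{\beta}\varepsilon_{l(\alpha)}$ (using $\varepsilon_{l(\beta)}\varepsilon_{l(\alpha)}=\varepsilon_{l(\alpha)}$ just established) and compare with $\alpha=\sigma_{\alpha}\varepsilon_{l(\alpha)}$ to obtain $\sigma_{\beta}^{-1}\sigma_{\alpha}\varepsilon_{l(\alpha)}=\varepsilon_{l(\alpha)}$; that is, the element $\sigma_{\beta}^{-1}\sigma_{\alpha}\in H(\mathbb{I})$ restricts to the identity on $\operatorname{dom}\varepsilon_{l(\alpha)}$, a cofinite subset of $\mathbb{N}^n$, hence on $\mathbb{N}^n\setminus\mathbf{C}_m$ for a suitable positive integer $m$. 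Then I would invoke the rigidity argument already used in the proof of Lemma~\ref{lemma-3.5}: by Theorem~\ref{theorem-3.2} a non-identity element of $H(\mathbb{I})$ is induced by a non-trivial permutation $\tau$ of $\{1,\dots,n\}$, and its fixed points all satisfy $x_i=x_j$ for some $i\ne j$ lying in a common cycle of $\tau$, so it is not the identity on $\mathbb{N}^n\setminus\mathbf{C}_m$ (which contains points with $x_i\ne x_j$ and all coordinates exceeding $m$); hence $\sigma_{\beta}^{-1}\sigma_{\alpha}=\mathbb{I}$, i.e. $\sigma_{\alpha}=\sigma_{\beta}$.

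The main — and essentially only — obstacle is this last step, showing $\sigma_{\alpha}=\sigma_{\beta}$, and it is disposed of by exactly the property of the group of units that already appears in Lemma~\ref{lemma-3.5}: no non-identity coordinate permutation of $\mathbb{N}^n$ can fix a cofinite set of points. Everything else is formal inverse-semigroup bookkeeping, so I do not anticipate any further difficulty; the whole argument is short.
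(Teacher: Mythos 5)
Your proof is correct and follows essentially the same route as the paper's: the backward implication is the formal one, and the forward implication reduces $\sigma_{\alpha}=\sigma_{\beta}$ to the rigidity fact that a non-identity element of $H(\mathbb{I})$ (a non-trivial coordinate permutation, by Theorem~\ref{theorem-3.2}) cannot act as the identity on a cofinite subset of $\mathbb{N}^n$, which is exactly the step the paper performs. The only cosmetic difference is the order of operations: the paper first extracts $\sigma_{\alpha}=\sigma_{\beta}$ from $\varepsilon_{l(\alpha)}=\sigma_{\alpha}^{-1}\sigma_{\beta}\varepsilon_{l(\beta)}\varepsilon$ and then deduces $\varepsilon_{l(\alpha)}\preccurlyeq\varepsilon_{l(\beta)}$, whereas you obtain the idempotent inequalities first via $\varepsilon_{l(\alpha)}=\alpha^{-1}\alpha$ and $\varepsilon_{r(\alpha)}=\alpha\alpha^{-1}$.
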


\begin{proof}
$(\Rightarrow)$
Suppose that $\alpha\preccurlyeq\beta$ in $\mathbf{I}\mathbb{N}_{\infty}^n$. We consider the case   $\alpha=\sigma_{\alpha}\varepsilon_{l(\alpha)}$ and $\beta=\sigma_{\beta}\varepsilon_{l(\beta)}$. Then there exists an idempotent $\varepsilon\in \mathbf{I}\mathbb{N}_{\infty}^n$ such that $\alpha=\beta\varepsilon$. Then $\sigma_{\alpha}\varepsilon_{l(\alpha)}=\sigma_{\beta}\varepsilon_{l(\beta)}\varepsilon$ and hence
\begin{equation*}
\varepsilon_{l(\alpha)}=\sigma_{\alpha}^{-1}\sigma_{\alpha}\varepsilon_{l(\alpha)}=
\sigma_{\alpha}^{-1}\sigma_{\beta}\varepsilon_{l(\beta)}\varepsilon,
\end{equation*}
because $\sigma_{\alpha}\in H(\mathbb{I})$. Since $\varepsilon_{l(\alpha)}$, $\varepsilon_{l(\beta)}$ and $\varepsilon$ are idempotents of $\mathbf{I}\mathbb{N}_{\infty}^n$ there exists a positive integer $m$ such that
\begin{equation*}
  \mathbb N^n\setminus\operatorname{dom}\varepsilon_{l(\alpha)}\cup\mathbb N^n\setminus\operatorname{dom}\varepsilon_{l(\beta)}\cup \mathbb N^n\setminus\operatorname{dom}\varepsilon \subseteq \mathbf{C}_{m},
\end{equation*}
and hence by Proposition~\ref{proposition-3.3}, $\mathbb{N}^n\setminus\operatorname{dom}(\varepsilon_{l(\beta)}\varepsilon) \subseteq \mathbf{C}_{m}$. Then the equality $\varepsilon_{l(\alpha)}=\sigma_{\alpha}^{-1}\sigma_{\beta}\varepsilon_{l(\beta)}\varepsilon$ implies that the restriction of $\sigma_{\alpha}^{-1}\sigma_{\beta}|_{\mathbb{N}^n\setminus \mathbf{C}_{m}}\colon \mathbb{N}^n\setminus \mathbf{C}_{m}\rightharpoonup \mathbb{N}^n$ is the identity partial map of $\mathbb{N}^n\setminus \mathbf{C}_{m}$, and hence by Theorem~\ref{theorem-3.2} we get that $\sigma_{\alpha}=\sigma_{\beta}$. Then
\begin{equation*}
\varepsilon_{l(\alpha)}=\sigma_{\alpha}^{-1}\sigma_{\beta}\varepsilon_{l(\beta)}\varepsilon= \sigma_{\alpha}^{-1}\sigma_{\alpha}\varepsilon_{l(\beta)}\varepsilon=\mathbb{I}\varepsilon_{l(\beta)}\varepsilon=\varepsilon_{l(\beta)}\varepsilon,
\end{equation*}
and hence $\varepsilon_{l(\alpha)}\preccurlyeq\varepsilon_{l(\beta)}$ in $E(\mathbf{I}\mathbb{N}_{\infty}^n)$.

\smallskip

The proof in the case $\alpha=\varepsilon_{r(\alpha)}\sigma_{\alpha}$ and $\beta=\varepsilon_{r(\beta)}\sigma_{\beta}$ is similar using Lemma~1.4.6 of \cite{Lawson-1998}.

\smallskip

The implication $(\Leftarrow)$ is trivial and it follows from Lemma~1.4.6 and Proposition~1.4.7 of \cite{Lawson-1998}.
\end{proof}

Since $\alpha\mathfrak{C}_{\textsf{mg}}\beta$ in $\mathbf{I}\mathbb{N}_{\infty}^n$ if and only if there exists $\gamma\in\mathbf{I}\mathbb{N}_{\infty}^n$ such that $\gamma\preccurlyeq \alpha$ and $\gamma\preccurlyeq\beta$ (see \cite[Section~2.4, p.~62]{Lawson-1998}), Theorem~\ref{theorem-3.6} implies that $\alpha\mathfrak{C}_{\textsf{mg}}\beta$ in $\mathbf{I}\mathbb{N}_{\infty}^n$ if and only if $\sigma_{\alpha}=\sigma_{\beta}$. Hence the following theorem holds.

\begin{theorem}\label{theorem-3.7}
Let $n$ be any positive integer $\geqslant 2$. Then the quotient semigroup $\mathbf{I}\mathbb{N}_{\infty}^n/\mathfrak{C}_{\textsf{mg}}$ is isomorphic to the group $\mathscr{S}_n$ and the natural homomorphism $\mathfrak{C}_{\textsf{mg}}^{\sharp}\colon \mathbf{I}\mathbb{N}_{\infty}^n\to \mathbf{I}\mathbb{N}_{\infty}^n/\mathfrak{C}_{\textsf{mg}}$ is defined in the following way: $\alpha\mapsto \sigma_\alpha$.
\end{theorem}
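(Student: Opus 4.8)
The plan is to promote the assignment $\alpha\mapsto\sigma_{\alpha}$ of Lemma~\ref{lemma-3.5} to the claimed isomorphism. The discussion preceding the theorem already records the decisive fact that $\alpha\,\mathfrak{C}_{\textsf{mg}}\,\beta$ in $\mathbf{I}\mathbb{N}_{\infty}^n$ if and only if $\sigma_{\alpha}=\sigma_{\beta}$ (combine the characterisation of $\mathfrak{C}_{\textsf{mg}}$ from \cite[Section~2.4]{Lawson-1998} with Theorem~\ref{theorem-3.6}), and I would take this as the starting point. It says precisely that the rule $\Phi\colon\mathbf{I}\mathbb{N}_{\infty}^n/\mathfrak{C}_{\textsf{mg}}\to H(\mathbb{I})$, $\mathfrak{C}_{\textsf{mg}}^{\sharp}(\alpha)\mapsto\sigma_{\alpha}$, is well defined and injective. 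For surjectivity one observes that every $\sigma\in H(\mathbb{I})$ satisfies $\sigma=\sigma\cdot\mathbb{I}$, so the uniqueness clause of Lemma~\ref{lemma-3.5} forces its unit component to be $\sigma$ itself; hence $\Phi$ hits every element of $H(\mathbb{I})$, and $H(\mathbb{I})\cong\mathscr{S}_n$ by Theorem~\ref{theorem-3.2}. Thus $\Phi$ is a bijection onto $\mathscr{S}_n$.

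It remains to check that $\Phi$ respects multiplication, i.e.\ that $\sigma_{\alpha\beta}=\sigma_{\alpha}\sigma_{\beta}$ for all $\alpha,\beta\in\mathbf{I}\mathbb{N}_{\infty}^n$. The key step is the observation that $\alpha\,\mathfrak{C}_{\textsf{mg}}\,\sigma_{\alpha}$ for every $\alpha$: indeed $\alpha=\sigma_{\alpha}\varepsilon_{l(\alpha)}\preccurlyeq\sigma_{\alpha}$ by the definition of the natural partial order (with idempotent factor $\varepsilon_{l(\alpha)}$), and trivially $\alpha\preccurlyeq\alpha$, so $\alpha$ and $\sigma_{\alpha}$ have the common lower bound $\alpha$ and are therefore $\mathfrak{C}_{\textsf{mg}}$-related. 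Since $\mathfrak{C}_{\textsf{mg}}$ is a congruence, $\alpha\beta\,\mathfrak{C}_{\textsf{mg}}\,\sigma_{\alpha}\sigma_{\beta}$; applying the equivalence $\gamma\,\mathfrak{C}_{\textsf{mg}}\,\delta\Leftrightarrow\sigma_{\gamma}=\sigma_{\delta}$ together with $\sigma_{\sigma_{\alpha}\sigma_{\beta}}=\sigma_{\alpha}\sigma_{\beta}$ (valid because $\sigma_{\alpha}\sigma_{\beta}\in H(\mathbb{I})$) yields $\sigma_{\alpha\beta}=\sigma_{\alpha}\sigma_{\beta}$. Consequently $\Phi$ is an isomorphism, and reading $\Phi$ as the identification $\mathbf{I}\mathbb{N}_{\infty}^n/\mathfrak{C}_{\textsf{mg}}\cong\mathscr{S}_n$ shows that the natural homomorphism $\mathfrak{C}_{\textsf{mg}}^{\sharp}$ is exactly $\alpha\mapsto\sigma_{\alpha}$.

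I do not expect a genuine obstacle: the hard analytic content is already spent in Lemmas~\ref{lemma-2.6}--\ref{lemma-2.8} and Theorems~\ref{theorem-2.9}--\ref{theorem-3.6}, and what is left is a short assembly. The only point requiring a little care is the multiplicativity of $\Phi$, and the reduction to $\alpha\,\mathfrak{C}_{\textsf{mg}}\,\sigma_{\alpha}$ is the natural device for it; an alternative would be to argue that $H(\mathbb{I})$ meets each $\mathfrak{C}_{\textsf{mg}}$-class in its greatest element and invoke the resulting $F$-inverse structure, but the congruence computation above is the most self-contained route.
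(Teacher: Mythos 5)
Your proposal is correct and follows essentially the same route as the paper, which simply derives the theorem from the observation (via the common-lower-bound characterisation of $\mathfrak{C}_{\textsf{mg}}$ and Theorem~\ref{theorem-3.6}) that $\alpha\,\mathfrak{C}_{\textsf{mg}}\,\beta$ if and only if $\sigma_{\alpha}=\sigma_{\beta}$, combined with Theorem~\ref{theorem-3.2}. You merely spell out the details the paper leaves implicit, most usefully the multiplicativity check $\sigma_{\alpha\beta}=\sigma_{\alpha}\sigma_{\beta}$ via $\alpha\,\mathfrak{C}_{\textsf{mg}}\,\sigma_{\alpha}$, which is a welcome addition but not a different argument.
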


The following theorem gives more detail description of Green's relations $\mathscr{R}$, $\mathscr{L}$, $\mathscr{H}$, $\mathscr{D}$ and $\mathscr{J}$ on the semigroup $\mathbf{I}\mathbb{N}_{\infty}^n$.

\begin{theorem}\label{theorem-3.8}
Let $n$ be any positive integer $\geqslant 2$ and $\alpha,\beta\in\mathbf{I}\mathbb{N}_{\infty}^n$. Then the following statements hold:
\begin{itemize}
    \item[$(i)$] $\alpha\mathscr{L}\beta$ if and only if there exists $\sigma\in H(\mathbb{I})$ such that $\alpha=\sigma\beta$;

    \item[$(ii)$] $\alpha\mathscr{R}\beta$ if and only if there exists $\sigma\in H(\mathbb{I})$ such that $\alpha=\beta\sigma$;

    \item[$(iii)$] $\alpha\mathscr{H}\beta$ if and only if there exist $\sigma_1,\sigma_2\in H(\mathbb{I})$ such that $\alpha=\sigma_1\beta$ and $\alpha=\beta\sigma_2$;

    \item[$(iv)$] $\alpha\mathscr{D}\beta$ if and only if there exist $\sigma_1,\sigma_2\in H(\mathbb{I})$ such that $\alpha=\sigma_1\beta\sigma_2$;

    \item[$(v)$] $\mathscr{D}=\mathscr{J}$ on $\mathbf{I}\mathbb{N}_{\infty}^n$;

    \item[$(vi)$] every $\mathscr{J}$-class in $\mathbf{I}\mathbb{N}_{\infty}^n$ is finite and consists of incomparable elements with the respect to the natural partial order $\preccurlyeq$ on $\mathbf{I}\mathbb{N}_{\infty}^n$.
\end{itemize}
\end{theorem}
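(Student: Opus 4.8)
The plan is to prove the six statements of Theorem~\ref{theorem-3.8} in order, leaning heavily on the canonical factorisation $\alpha=\sigma_{\alpha}\varepsilon_{l(\alpha)}=\varepsilon_{r(\alpha)}\sigma_{\alpha}$ from Lemma~\ref{lemma-3.5}, the description of $\mathscr{R}$, $\mathscr{L}$, $\mathscr{H}$ by domains and ranges from Proposition~\ref{proposition-3.3}$(vi)$--$(viii)$, and the fact that $\mathbf{I}\mathbb{N}_{\infty}^n$ is inverse. For $(i)$: if $\alpha\mathscr{L}\beta$ then $\operatorname{ran}\alpha=\operatorname{ran}\beta$, so $\varepsilon_{r(\alpha)}=\alpha\alpha^{-1}$ and $\varepsilon_{r(\beta)}=\beta\beta^{-1}$ have the same domain, hence are equal idempotents, and then $\sigma:=\sigma_{\alpha}\sigma_{\beta}^{-1}\in H(\mathbb{I})$ satisfies $\sigma\beta=\sigma_{\alpha}\sigma_{\beta}^{-1}\varepsilon_{r(\beta)}\sigma_{\beta} = \sigma_{\alpha}\varepsilon_{r(\alpha)}\sigma_{\beta}\sigma_{\beta}^{-1}\sigma_{\beta}$; a short manipulation using $\varepsilon_{r(\alpha)}\sigma_{\alpha}=\sigma_{\alpha}\varepsilon_{l(\alpha)}$ and $\sigma_{\beta}^{-1}\varepsilon_{r(\beta)}\sigma_{\beta} = \varepsilon_{l(\beta)}$ shows $\sigma\beta=\alpha$. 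Conversely, if $\alpha=\sigma\beta$ with $\sigma\in H(\mathbb{I})$ then $\operatorname{ran}\alpha\subseteq\operatorname{ran}\beta$ and, applying $\sigma^{-1}$, also $\operatorname{ran}\beta\subseteq\operatorname{ran}\alpha$, so $\operatorname{ran}\alpha=\operatorname{ran}\beta$ and $\alpha\mathscr{L}\beta$ by Proposition~\ref{proposition-3.3}$(vii)$. Statement $(ii)$ is the exact mirror image, using $\operatorname{dom}$ in place of $\operatorname{ran}$ and Proposition~\ref{proposition-3.3}$(vi)$, and $(iii)$ follows by combining $(i)$ and $(ii)$ together with Proposition~\ref{proposition-3.3}$(viii)$, $\mathscr{H}=\mathscr{L}\cap\mathscr{R}$.

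For $(iv)$: since $\mathscr{D}=\mathscr{L}\circ\mathscr{R}$, $\alpha\mathscr{D}\beta$ means there is $\gamma$ with $\alpha\mathscr{L}\gamma$ and $\gamma\mathscr{R}\beta$; applying $(i)$ and $(ii)$ gives $\sigma_1\in H(\mathbb{I})$ with $\alpha=\sigma_1\gamma$ and $\sigma_2\in H(\mathbb{I})$ with $\gamma=\beta\sigma_2$, whence $\alpha=\sigma_1\beta\sigma_2$; conversely $\alpha=\sigma_1\beta\sigma_2$ factors through $\gamma:=\beta\sigma_2$ as $\alpha=\sigma_1\gamma$ with $\alpha\mathscr{L}\gamma$ (by $(i)$) and $\gamma\mathscr{R}\beta$ (by $(ii)$), so $\alpha\mathscr{D}\beta$. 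For $(v)$, the inclusion $\mathscr{D}\subseteq\mathscr{J}$ is automatic in any semigroup, so the content is $\mathscr{J}\subseteq\mathscr{D}$: if $\mathbb{I}\alpha S^1 = S^1\beta S^1$ (here $S=\mathbf{I}\mathbb{N}_{\infty}^n$), write $\beta=\mu\alpha\nu$ and $\alpha=\mu'\beta\nu'$ for suitable $\mu,\nu,\mu',\nu'\in S^1$; then $\operatorname{dom}\beta$ and $\operatorname{ran}\beta$ each differ from $\operatorname{dom}\alpha$, $\operatorname{ran}\alpha$ by an injective-isometric image, and since every such map only shifts/permutes coordinates (Lemma~\ref{lemma-2.6}) while $\mathbb{N}^n\setminus\operatorname{dom}$ is a fixed finite set, the cardinalities $|\mathbb{N}^n\setminus\operatorname{dom}\alpha|$ and $|\mathbb{N}^n\setminus\operatorname{dom}\beta|$ coincide; I then argue that a composite which shrinks neither the complement of the domain nor of the range must have $\mu,\mu',\nu,\nu'$ effectively in $H(\mathbb{I})$, giving $\alpha=\sigma_1\beta\sigma_2$ and hence $\alpha\mathscr{D}\beta$ by $(iv)$.

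For $(vi)$: fix $\alpha$ with factorisation $\alpha=\sigma_{\alpha}\varepsilon_{r(\alpha)}\sigma_{\alpha}$ wait --- rather $\alpha=\varepsilon_{r(\alpha)}\sigma_{\alpha}=\sigma_{\alpha}\varepsilon_{l(\alpha)}$. By $(iv)$, the $\mathscr{D}$-class of $\alpha$ is $\{\sigma_1\alpha\sigma_2:\sigma_1,\sigma_2\in H(\mathbb{I})\}$, and since $H(\mathbb{I})\cong\mathscr{S}_n$ is finite (Theorem~\ref{theorem-3.2}) this class has at most $|\mathscr{S}_n|^2=(n!)^2$ elements, hence is finite; combined with $(v)$ this gives finiteness of every $\mathscr{J}$-class. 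For incomparability: if $\gamma,\delta$ lie in the same $\mathscr{J}=\mathscr{D}$-class and $\gamma\preccurlyeq\delta$, then by Theorem~\ref{theorem-3.6} we have $\sigma_{\gamma}=\sigma_{\delta}$ and $\varepsilon_{r(\gamma)}\preccurlyeq\varepsilon_{r(\delta)}$, i.e. $\operatorname{dom}\varepsilon_{r(\gamma)}\subseteq\operatorname{dom}\varepsilon_{r(\delta)}$, which means $\operatorname{ran}\gamma\subseteq\operatorname{ran}\delta$; but $\gamma\mathscr{D}\delta$ forces $|\mathbb{N}^n\setminus\operatorname{ran}\gamma|=|\mathbb{N}^n\setminus\operatorname{ran}\delta|$ (both finite), so $\operatorname{ran}\gamma=\operatorname{ran}\delta$, and similarly $\operatorname{dom}\gamma=\operatorname{dom}\delta$; then $\gamma\mathscr{H}\delta$ with equal $\sigma$-parts forces $\gamma=\delta$.

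The main obstacle I anticipate is the step $\mathscr{J}\subseteq\mathscr{D}$ in part $(v)$: one must rule out that a product $\mu\alpha\nu$ can be $\mathscr{J}$-equivalent to $\alpha$ while $\mu$ or $\nu$ is a genuine non-invertible partial isometry (which would enlarge $\mathbb{N}^n\setminus\operatorname{dom}$ or $\mathbb{N}^n\setminus\operatorname{ran}$). The clean way to handle this is to show that $\alpha\mathscr{J}\beta$ implies $|\mathbb{N}^n\setminus\operatorname{dom}\alpha|=|\mathbb{N}^n\setminus\operatorname{dom}\beta|$ and $|\mathbb{N}^n\setminus\operatorname{ran}\alpha|=|\mathbb{N}^n\setminus\operatorname{ran}\beta|$ by a counting/monotonicity argument on these finite complements under composition, and then observe that equality of these cardinalities upgrades the $\mathscr{J}$-witnesses to units via the rigidity of partial isometries established in Section~\ref{section-2}; after that $(iv)$ closes the loop. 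Everything else is bookkeeping with the canonical factorisation and the domain/range descriptions of Green's relations already in hand.
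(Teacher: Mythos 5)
Parts $(i)$--$(iv)$ and $(vi)$ of your argument are essentially the paper's own proof: $\mathscr{L}$- and $\mathscr{R}$-equivalence are translated through the canonical factorisation of Lemma~\ref{lemma-3.5}, $(iv)$ comes from $\mathscr{D}=\mathscr{L}\circ\mathscr{R}$, and finiteness and incomparability in $(vi)$ come from Theorem~\ref{theorem-3.2} and Theorem~\ref{theorem-3.6}. One slip in $(i)$: from $\operatorname{ran}\alpha=\operatorname{ran}\beta$ you conclude that $\varepsilon_{r(\alpha)}=\alpha\alpha^{-1}$ and $\varepsilon_{r(\beta)}=\beta\beta^{-1}$ coincide, but $\alpha\alpha^{-1}$ is the identity on $\operatorname{dom}\alpha$, not on $\operatorname{ran}\alpha$; equality of ranges gives $\alpha^{-1}\alpha=\beta^{-1}\beta$, i.e.\ $\varepsilon_{l(\alpha)}=\varepsilon_{l(\beta)}$. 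Your final formula $\sigma=\sigma_{\alpha}\sigma_{\beta}^{-1}$ is nevertheless correct once the computation is run with $\varepsilon_{l}$ instead of $\varepsilon_{r}$.

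The substantive issue is $(v)$. The paper proves $\mathscr{J}\subseteq\mathscr{D}$ abstractly, by checking that every $\mathscr{D}$-class contains a minimal element (all idempotents in a $\mathscr{D}$-class have equinumerous complements, hence are incomparable) and then invoking Proposition~3.2.17 of \cite{Lawson-1998}. Your route --- show that $\alpha\mathscr{J}\beta$ forces $|\mathbb{N}^n\setminus\operatorname{dom}\alpha|=|\mathbb{N}^n\setminus\operatorname{dom}\beta|$ and $|\mathbb{N}^n\setminus\operatorname{ran}\alpha|=|\mathbb{N}^n\setminus\operatorname{ran}\beta|$, and then ``upgrade the witnesses to units'' --- is genuinely different and can be made to work, but as written its decisive step is missing: equality of the complement cardinalities does \emph{not} by itself yield $\mathscr{D}$-equivalence. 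For example, the idempotents with complements $\{(1,\ldots,1)\}$ and $\{(5,\ldots,5)\}$ have equinumerous complements, yet they are not $\mathscr{D}$-equivalent, because by $(iv)$ and Corollary~\ref{corollarry-2.3} any unit fixes $(1,\ldots,1)$ and hence cannot carry one complement onto the other. So the counting argument alone cannot ``close the loop''. What actually finishes the proof is a set-level containment extracted from Lemma~\ref{lemma-3.5}: writing $\mu=\sigma_{\mu}\varepsilon_{l(\mu)}$, $\nu=\sigma_{\nu}\varepsilon_{l(\nu)}$, one gets $\beta=\mu\alpha\nu=\sigma_{\mu}\sigma_{\alpha}\sigma_{\nu}\varepsilon'$, where $\varepsilon'$ is an idempotent whose complement contains $\left(\mathbb{N}^n\setminus\operatorname{ran}\alpha\right)\sigma_{\nu}$ together with the contributions of $\varepsilon_{l(\mu)}$ and $\varepsilon_{l(\nu)}$; the cardinality equality then forces those extra contributions to vanish and gives $\mathbb{N}^n\setminus\operatorname{ran}\beta=\left(\mathbb{N}^n\setminus\operatorname{ran}\alpha\right)\sigma_{\nu}$, whence $\varepsilon_{l(\beta)}=\sigma_{\nu}^{-1}\varepsilon_{l(\alpha)}\sigma_{\nu}$ and $\beta=\sigma_{\mu}\alpha\sigma_{\nu}$, so that $(iv)$ applies. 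You should either carry out this computation explicitly or fall back on the paper's argument via minimal elements and Lawson's criterion.
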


\begin{proof}
$(i)$ $(\Rightarrow)$ Suppose that $\alpha\mathscr{L}\beta$ in $\mathbf{I}\mathbb{N}_{\infty}^n$. Then $\alpha^{-1}\alpha=\beta^{-1}\beta$ and by Lemma~\ref{lemma-3.4} there exists unique elements  $\sigma_{l_{\alpha}}$ and $\sigma_{l_{\beta}}$  of the group of units $H(\mathbb{I})$ such that $\sigma_{l_{\alpha}}\alpha$ and $\sigma_{l_{\beta}}\alpha$ are idempotents of the semigroup $\mathbf{I}\mathbb{N}_{\infty}^n$. This implies that
\begin{equation*}
\sigma_{l_{\alpha}}\alpha=\alpha^{-1}\alpha=\beta^{-1}\beta=\sigma_{l_{\beta}}\beta,
\end{equation*}
because $\mathbf{I}\mathbb{N}_{\infty}^n$ is an inverse semigroup and
\begin{equation*}
\operatorname{ran}(\sigma_{l_{\alpha}}\alpha)=\operatorname{ran}(\alpha^{-1}\alpha)=\operatorname{ran}(\beta^{-1}\beta)= \operatorname{ran}(\sigma_{l_{\beta}}\beta).
\end{equation*}
Hence we get that $\alpha=\mathbb{I}\alpha=\sigma_{l_{\alpha}}^{-1}\sigma_{l_{\alpha}}\alpha=\sigma_{l_{\alpha}}^{-1}\sigma_{l_{\beta}}\beta$, and the element $\sigma=\sigma_{l_{\alpha}}^{-1}\sigma_{l_{\beta}}\in H(\mathbb{I})$ is requested.

\smallskip

$(\Leftarrow)$ Suppose that there exists $\sigma\in H(\mathbb{I})$ such that $\alpha=\sigma\beta$. Then
\begin{equation*}
\alpha^{-1}\alpha=(\sigma\beta)^{-1}\sigma\beta=\beta^{-1}\sigma^{-1}\sigma\beta=\beta^{-1}\mathbb{I}\beta=\beta^{-1}\beta,
\end{equation*}
and hence $\alpha\mathscr{L}\beta$ in $\mathbf{I}\mathbb{N}_{\infty}^n$.

\smallskip

The proof of statement $(ii)$ is similar to $(i)$.

\smallskip

Statement $(iii)$ follows from  $(i)$ and $(ii)$.

\smallskip

$(iv)$ $(\Rightarrow)$ Suppose that $\alpha\mathscr{D}\beta$ in $\mathbf{I}\mathbb{N}_{\infty}^n$. Then there exists $\gamma\in \mathbf{I}\mathbb{N}_{\infty}^n$ such that $\alpha\mathscr{L}\gamma$ and $\gamma\mathscr{R}\beta$ and by statements $(i)$ and $(ii)$ we get that there exist $\sigma_1,\sigma_2\in H(\mathbb{I})$ such that $\alpha=\sigma_1\gamma$ and $\gamma=\beta\sigma_2$, and hence $\alpha=\sigma_1\beta\sigma_2$.

\smallskip

$(\Leftarrow)$ Suppose that there exist $\sigma_1,\sigma_2\in H(\mathbb{I})$ such that $\alpha=\sigma_1\beta\sigma_2$. Then $\alpha=\sigma_1\gamma$ for $\gamma=\beta\sigma_2$, which implies by statements $(i)$ and $(ii)$ that $\alpha\mathscr{L}\gamma$ and $\gamma\mathscr{R}\beta$, and hence $\alpha\mathscr{D}\beta$ in $\mathbf{I}\mathbb{N}_{\infty}^n$.

\smallskip

$(v)$ By Proposition~3.2.17 of  \cite{Lawson-1998} it is sufficient to show that every $\mathscr{D}$-class $D$ in $\mathbf{I}\mathbb{N}_{\infty}^n$ contains a minimal element with the respect to the partial natural order $\preccurlyeq$ on $\mathbf{I}\mathbb{N}_{\infty}^n$. Fix an arbitrary $\mathscr{D}$-class $D$ in $\mathbf{I}\mathbb{N}_{\infty}^n$ and any $\alpha\in D$. Then by Lemma~\ref{lemma-3.5} there exists the unique element $\sigma_{\alpha}$ of the group of units $H(\mathbb{I})$ and the unique idempotent $\varepsilon_{l(\alpha)}$ of $\mathbf{I}\mathbb{N}_{\infty}^n$ such that $\alpha=\sigma_{\alpha}\varepsilon_{l(\alpha)}$. Then $\sigma_{\alpha}^{-1}\alpha=\sigma_{\alpha}^{-1}\sigma_{\alpha}\varepsilon_{l(\alpha)}=\mathbb{I}\varepsilon_{l(\alpha)}=\varepsilon_{l(\alpha)}$ and $\varepsilon_{l(\alpha)}\in D$ by statement $(iv)$. Also statement $(iv)$ and Theorem~\ref{theorem-3.2} imply that if an idempotent $\varepsilon$ belongs to $D$ then $\left|\mathbb{N}^n\setminus\operatorname{dom}\varepsilon\right|=\left|\mathbb{N}^n\setminus\operatorname{dom}\varepsilon_{l(\alpha)}\right|$, and hence all idempotents in $\mathscr{D}$-class $D$ are incomparable with respect to the natural partial order $\preccurlyeq$ on $\mathbf{I}\mathbb{N}_{\infty}^n$. Thus, every  $\mathscr{D}$-class $D$ in $\mathbf{I}\mathbb{N}_{\infty}^n$ contains a minimal element.

\smallskip

$(vi)$ The first statement follows from  $(v)$, $(iv)$ and Theorem~\ref{theorem-3.2}.

Let $\alpha$ and $\beta$ be elements of a some $\mathscr{J}$-class $j$ in $\mathbf{I}\mathbb{N}_{\infty}^n$ such that $\alpha\preccurlyeq\beta$. Then by Lemma~\ref{lemma-3.5} there exist the unique elements $\sigma_{\alpha},\sigma_{\beta}\in H(\mathbb{I})$ and $\varepsilon_{l(\alpha)}, \varepsilon_{l(\beta)}\in E(\mathbf{I}\mathbb{N}_{\infty}^n)$ such that $\alpha=\sigma_{\alpha}\varepsilon_{l(\alpha)}$ and $\beta=\sigma_{\beta}\varepsilon_{l(\beta)}$. By Theorem~\ref{theorem-3.6}, $\sigma_{\alpha}=\sigma_{\beta}$ and $\varepsilon_{l(\alpha)}\preccurlyeq \varepsilon_{l(\beta)}$ in $E(\mathbf{I}\mathbb{N}_{\infty}^n)$. Then $\sigma_{\alpha}^{-1}\alpha=\sigma_{\alpha}^{-1}\sigma_{\alpha}\varepsilon_{l(\alpha)}=\varepsilon_{l(\alpha)}$ and $\sigma_{\alpha}^{-1}\beta=\sigma_{\alpha}^{-1}\sigma_{\beta}\varepsilon_{l(\beta)}=\varepsilon_{l(\beta)}$ are $\mathscr{J}$-equivalent idempotents in $\mathbf{I}\mathbb{N}_{\infty}^n$. By the proof of statement $(v)$ we get that $\varepsilon_{l(\alpha)}=\varepsilon_{l(\beta)}$, and hence by Lemma~\ref{lemma-3.5}, $\alpha=\beta$.
\end{proof}

\section{The structure theorem for the semigroup $\mathbf{I}\mathbb{N}_{\infty}^n$}

Recall \cite{McFadden-O'Carroll-1971}, an inverse semigroup $S$ is said to be \emph{$F$-inverse} if every element of $S$ has a unique maximal element above it in the natural partial order $\preccurlyeq$, i.e. every $\mathfrak{C}_{\textsf{mg}}$-class has a maximum element. It is obvious that every $F$-inverse semigroup contains a unit. An inverse semigroup $S$ is \emph{$E$-unitary} if, whenever $e$ is an idempotent in $S$, $s\in S$ and $e\preccurlyeq s$, then $s$ is an idempotent in $S$ \cite{Saito-1965}.

\smallskip

Lemma~\ref{lemma-3.5} implies the following two corollaries.

\begin{corollary}\label{corollary-4.1}
$\mathbf{I}\mathbb{N}_{\infty}^n$ is an $F$-inverse semigroup for any positive integer $n\geqslant 2$.
\end{corollary}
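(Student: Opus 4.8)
The plan is to show that $\mathbf{I}\mathbb{N}_{\infty}^n$ is $F$-inverse directly from the normal form supplied by Lemma~\ref{lemma-3.5}, by exhibiting, for each element, a canonical maximal element lying above it in the natural partial order and checking it is the unique such maximal element. First I would fix $\alpha\in\mathbf{I}\mathbb{N}_{\infty}^n$ and invoke Lemma~\ref{lemma-3.5} to write $\alpha=\sigma_{\alpha}\varepsilon_{l(\alpha)}=\varepsilon_{r(\alpha)}\sigma_{\alpha}$ with $\sigma_{\alpha}\in H(\mathbb{I})$ unique and $\varepsilon_{l(\alpha)},\varepsilon_{r(\alpha)}\in E(\mathbf{I}\mathbb{N}_{\infty}^n)$ unique. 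The natural candidate for the maximal element above $\alpha$ is $\sigma_{\alpha}$ itself, viewed as an element of $\mathbf{I}\mathbb{N}_{\infty}^n$ via Theorem~\ref{theorem-3.2}: since $\varepsilon_{l(\alpha)}\preccurlyeq\mathbb{I}$ in $E(\mathbf{I}\mathbb{N}_{\infty}^n)$ trivially, and $\sigma_{\alpha}=\sigma_{\alpha}\mathbb{I}=\mathbb{I}\sigma_{\alpha}$ is the normal form of $\sigma_{\alpha}$, Theorem~\ref{theorem-3.6} gives $\alpha\preccurlyeq\sigma_{\alpha}$.

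Next I would verify that $\sigma_\alpha$ is maximal and that it is the only maximal element above $\alpha$. Suppose $\alpha\preccurlyeq\beta$ for some $\beta\in\mathbf{I}\mathbb{N}_{\infty}^n$, and write $\beta=\sigma_{\beta}\varepsilon_{l(\beta)}=\varepsilon_{r(\beta)}\sigma_{\beta}$ in its normal form. By Theorem~\ref{theorem-3.6} the relation $\alpha\preccurlyeq\beta$ forces $\sigma_{\alpha}=\sigma_{\beta}$, so every element above $\alpha$ has the same group part $\sigma_{\alpha}$. Consequently $\sigma_{\alpha}$ lies above $\beta$ as well (again by Theorem~\ref{theorem-3.6}, since $\varepsilon_{l(\beta)}\preccurlyeq\mathbb{I}$ and $\varepsilon_{r(\beta)}\preccurlyeq\mathbb{I}$), which shows simultaneously that $\sigma_{\alpha}$ is the greatest element of the set $\{\gamma\in\mathbf{I}\mathbb{N}_{\infty}^n : \alpha\preccurlyeq\gamma\}$, hence the unique maximal one. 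Therefore every element of $\mathbf{I}\mathbb{N}_{\infty}^n$ has a unique maximal element above it, which is the definition of an $F$-inverse semigroup; note $\mathbb{I}$ is the unit, consistent with the fact that every $F$-inverse semigroup has a unit.

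There is essentially no obstacle here: the entire argument is a one-line consequence of Lemma~\ref{lemma-3.5} together with the order characterization in Theorem~\ref{theorem-3.6}. The only point requiring a modicum of care is the compatibility of the two normal-form representations $\alpha=\sigma_{\alpha}\varepsilon_{l(\alpha)}=\varepsilon_{r(\alpha)}\sigma_{\alpha}$ when applying Theorem~\ref{theorem-3.6}; but this is built into the statement of Theorem~\ref{theorem-3.6}, which is phrased precisely in terms of both the left and right decompositions. Alternatively, one could observe that $\mathbf{I}\mathbb{N}_{\infty}^n$ is $E$-unitary --- since $\varepsilon\preccurlyeq\alpha$ with $\varepsilon$ idempotent forces $\sigma_{\alpha}$ to equal the (identity) group part of $\varepsilon$ by Theorem~\ref{theorem-3.6}, whence $\alpha$ is idempotent --- and combine this with Theorem~\ref{theorem-3.7} and the fact that $\mathscr{S}_n$ is a group (so each $\mathfrak{C}_{\textsf{mg}}$-class, being $E$-unitary, has the image of $\sigma_{\alpha}$ realized by a maximum element), but the direct argument via Lemma~\ref{lemma-3.5} is cleaner and self-contained.
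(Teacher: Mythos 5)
Your proof is correct and follows essentially the same route the paper intends: the paper simply asserts that Corollary~\ref{corollary-4.1} follows from Lemma~\ref{lemma-3.5}, and your argument is the natural expansion of that claim, using the decomposition $\alpha=\sigma_{\alpha}\varepsilon_{l(\alpha)}$ to get $\alpha\preccurlyeq\sigma_{\alpha}$ and Theorem~\ref{theorem-3.6} to see that $\sigma_{\alpha}$ is the greatest (hence the unique maximal) element above $\alpha$.
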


\begin{corollary}\label{corollary-4.1a}
$\mathbf{I}\mathbb{N}_{\infty}^n$ is an $E$-unitary inverse semigroup for any positive integer $n\geqslant 2$.
\end{corollary}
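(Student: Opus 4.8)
The plan is to read off $E$-unitarity directly from the structural decomposition in Lemma~\ref{lemma-3.5} together with the description of the natural partial order in Theorem~\ref{theorem-3.6} (equivalently, from Theorem~\ref{theorem-3.7}). Recall that $\mathbf{I}\mathbb{N}_{\infty}^n$ being $E$-unitary means: whenever $\varepsilon\in E(\mathbf{I}\mathbb{N}_{\infty}^n)$, $\alpha\in\mathbf{I}\mathbb{N}_{\infty}^n$ and $\varepsilon\preccurlyeq\alpha$, then $\alpha\in E(\mathbf{I}\mathbb{N}_{\infty}^n)$. So I would fix such $\varepsilon$ and $\alpha$ and aim to prove that $\alpha$ is an idempotent.

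First I would apply Lemma~\ref{lemma-3.5} to $\alpha$ and to $\varepsilon$, obtaining the uniquely determined $\sigma_{\alpha},\sigma_{\varepsilon}\in H(\mathbb{I})$ and idempotents with $\alpha=\sigma_{\alpha}\varepsilon_{l(\alpha)}=\varepsilon_{r(\alpha)}\sigma_{\alpha}$ and $\varepsilon=\sigma_{\varepsilon}\varepsilon_{l(\varepsilon)}=\varepsilon_{r(\varepsilon)}\sigma_{\varepsilon}$. The key (and only non-routine) observation is that $\sigma_{\varepsilon}=\mathbb{I}$: indeed $\varepsilon=\mathbb{I}\cdot\varepsilon=\varepsilon\cdot\mathbb{I}$ with $\varepsilon$ an idempotent, so $\mathbb{I}$ together with $\varepsilon$ furnishes a decomposition of $\varepsilon$ of the form provided by Lemma~\ref{lemma-3.5}, and the uniqueness clause of that lemma forces $\sigma_{\varepsilon}=\mathbb{I}$ (and $\varepsilon_{l(\varepsilon)}=\varepsilon_{r(\varepsilon)}=\varepsilon$).

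Next, from $\varepsilon\preccurlyeq\alpha$ and Theorem~\ref{theorem-3.6} I obtain $\sigma_{\alpha}=\sigma_{\varepsilon}=\mathbb{I}$; alternatively, $\varepsilon\preccurlyeq\varepsilon$ and $\varepsilon\preccurlyeq\alpha$ give $\varepsilon\,\mathfrak{C}_{\textsf{mg}}\,\alpha$, whence $\sigma_{\alpha}=\sigma_{\varepsilon}=\mathbb{I}$ by Theorem~\ref{theorem-3.7}. Consequently $\alpha=\sigma_{\alpha}\varepsilon_{l(\alpha)}=\mathbb{I}\,\varepsilon_{l(\alpha)}=\varepsilon_{l(\alpha)}\in E(\mathbf{I}\mathbb{N}_{\infty}^n)$, which is exactly the required conclusion.

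There is no serious obstacle here: the argument is a two-line consequence of the normal form in Lemma~\ref{lemma-3.5}, and the only point needing a word of care is the identification $\sigma_{\varepsilon}=\mathbb{I}$ for idempotents $\varepsilon$, which rests on the uniqueness statement of that lemma. If a softer route is preferred, one may instead invoke the classical fact that every $F$-inverse semigroup is $E$-unitary \cite{McFadden-O'Carroll-1971} together with Corollary~\ref{corollary-4.1}, but the direct proof above is shorter and keeps the section self-contained.
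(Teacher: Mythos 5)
Your argument is correct and follows the paper's intended route: the paper states this corollary as an immediate consequence of Lemma~\ref{lemma-3.5} with no written proof, and what you supply is exactly the missing detail --- the uniqueness clause of Lemma~\ref{lemma-3.5} forces $\sigma_{\varepsilon}=\mathbb{I}$ for every idempotent $\varepsilon$, and Theorem~\ref{theorem-3.6} (or Theorem~\ref{theorem-3.7}) transfers $\sigma_{\alpha}=\mathbb{I}$ to any $\alpha$ lying above $\varepsilon$, whence $\alpha=\varepsilon_{l(\alpha)}\in E(\mathbf{I}\mathbb{N}_{\infty}^n)$. The alternative you mention, invoking Corollary~\ref{corollary-4.1} together with the classical fact that every $F$-inverse monoid is $E$-unitary, is also sound.
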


For any element $s$ of an inverse semigroup $S$ we denote ${\downarrow}s=\{x\in S\colon x\preccurlyeq s\}$, where $\preccurlyeq$ is the natural partial order on $S$.

\smallskip

Let $S$ be any $F$-inverse semigroup. Then for any $s\in S$ we denote by $e_s$ the idempotent $ss^{-1}\in S$ and by $t_s$ the
maximum element in the $\mathfrak{C}_{\textsf{mg}}$-class of $s\in S$, and by $T_S$ the set $\{t_s\colon s\in S\}$. We note that $T$ is the set of maximal elements of $S$, and that $T_S$ need not be closed under multiplication \cite{McFadden-O'Carroll-1971}. For each  $x\in S$, as in \cite{McFadden-O'Carroll-1971} let $e_x$ denote the idempotent $xx^{-1}$.

\smallskip

The structure of $F$-inverse semigroups is described in \cite{McFadden-O'Carroll-1971}, and later we need the following two statements for the description of the structure of the semigroup $\mathbf{I}\mathbb{N}_{\infty}^n$.

\begin{lemma}[{\cite[Lemma~3]{McFadden-O'Carroll-1971}}]\label{lemma-4.2}
Let $S$ be an $F$-inverse semigroup and $1_S$  denote the identity
element of $S$. Then
\begin{itemize}
  \item[$(i)$] $E(S)$ is a semilattice with $1_S$ for an identity element.

  \item[$(ii)$] On $T_S$ define the following multiplication:
\begin{equation*}
    u\ast v=t_{uv}, \qquad \hbox{for} \quad u,v\in T_S.
\end{equation*}
  Then $(T_S,*)$ is a group with $1_S$ for an identity element, and each $t\in T_S$ has $t^{-1}$ as its group inverse in $(T_S,*)$.
  \item[$(iii)$] For each $t\in T_S$ the map $\mathfrak{F}_t\colon E(S)\rightarrow {\downarrow}e_t$, defined by
\begin{equation*}
    (f)\mathfrak{F}_t=tft^{-1}, \qquad f\in E(S),
\end{equation*}
   is a homomorphism onto ${\downarrow}e_t$, and moreover $\mathfrak{F}_{1_S}$  is the identity map on $E(S)$.

   \item[$(iv)$] For each $t\in T_S$ $(1_S)\mathfrak{F}_t=e_t$ and $(e_t)\mathfrak{F}_{t^{-1}}=e_{t^{-1}}$.
   \item[$(v)$] For each $u,v\in T_S$
\begin{equation*}
   \left((1_S)\mathfrak{F}_u\right)\mathfrak{F}_v\cdot (f)\mathfrak{F}_{u\ast v}=\left((f)\mathfrak{F}_u\right)\mathfrak{F}_v, \qquad  \hbox{for all } \quad f\in E(S);
\end{equation*}
   \item[$(vi)$] If $u,v\in T_S$ then
\begin{equation*}
    f\cdot(g)\mathfrak{F}_u\leqslant e_{u\ast v},
\end{equation*}
   for all idempotents $f\leqslant e_u$ and $g\leqslant e_v$ of $S$.
\end{itemize}
\end{lemma}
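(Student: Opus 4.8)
The statement is a structural description of an arbitrary $F$-inverse semigroup $S$, and the plan is to prove the six items in the order given, using only: that $S$ has an identity $1_{S}$ (as recalled above) and that every $\mathfrak{C}_{\textsf{mg}}$-class has a \emph{unique} maximum $t_{s}$; the commutativity of the band $E(S)$; the characterisation $a\preccurlyeq b\iff a=b(a^{-1}a)\iff a=(aa^{-1})b$ of the natural partial order together with its compatibility with multiplication and inversion; and $a\,\mathfrak{C}_{\textsf{mg}}\,b\iff ae=be$ for some $e\in E(S)$. With this, $(i)$ is immediate: $E(S)$ is a commutative band, hence a semilattice, and $1_{S}\in E(S)$ satisfies $e1_{S}=e=1_{S}e$ for every $e\in E(S)$.

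For $(ii)$ I would pass to the quotient homomorphism $\varphi\colon S\to G:=S/\mathfrak{C}_{\textsf{mg}}$ onto the group $G$. Its restriction to $T_{S}$ is a bijection onto $G$: surjective because every $\mathfrak{C}_{\textsf{mg}}$-class has a maximum, which lies in $T_{S}$; injective because two elements of $T_{S}$ with the same image lie in one $\mathfrak{C}_{\textsf{mg}}$-class and, both being maximal there, coincide. Since $\varphi$ is constant on $\mathfrak{C}_{\textsf{mg}}$-classes, $(t_{uv})\varphi=(uv)\varphi=(u\varphi)(v\varphi)$, so $t_{uv}$ is precisely the $\varphi|_{T_{S}}$-preimage of $(u\varphi)(v\varphi)$; transporting the group law of $G$ along $\varphi|_{T_{S}}$ therefore reproduces exactly the operation $u\ast v=t_{uv}$, so $(T_{S},\ast)$ is a group. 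Its identity is $1_{S}$, which is maximal in $S$ (if $1_{S}\preccurlyeq s$ then $s=1_{S}s=1_{S}$) and maps to $1_{G}$; and since inversion preserves maximality, $t^{-1}\in T_{S}$, while $(tt^{-1})\varphi=1_{G}$ forces $(t^{-1})\varphi=\big((t)\varphi\big)^{-1}$, so $t^{-1}$ is the $\ast$-inverse of $t$.

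Items $(iii)$ and $(iv)$ are then direct computations with idempotents. For $t\in T_{S}$ and $f\in E(S)$ one checks that $tft^{-1}$ is idempotent and that $tft^{-1}\cdot tt^{-1}=tft^{-1}$, so $(f)\mathfrak{F}_{t}=tft^{-1}\preccurlyeq e_{t}$; inserting $t^{-1}t$ and using commutativity of $E(S)$ gives $t(fg)t^{-1}=tft^{-1}\cdot tgt^{-1}$, so $\mathfrak{F}_{t}$ is a homomorphism; it is onto ${\downarrow}e_{t}$ because any $h\preccurlyeq e_{t}$ equals $(t^{-1}ht)\mathfrak{F}_{t}$ with $t^{-1}ht\in E(S)$; and $(f)\mathfrak{F}_{1_{S}}=1_{S}f1_{S}=f$. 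For $(iv)$, $(1_{S})\mathfrak{F}_{t}=tt^{-1}=e_{t}$ and $(e_{t})\mathfrak{F}_{t^{-1}}=t^{-1}(tt^{-1})t=t^{-1}t=e_{t^{-1}}$.

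The one item that uses the $F$-inverse structure beyond pure formalities, and the step I expect to be the main obstacle, is $(v)$. Put $w=u\ast v$. By the definition of $\ast$ and the maximality of $t_{uv}$ in the $\mathfrak{C}_{\textsf{mg}}$-class of $uv$ we have $uv\preccurlyeq w$, hence $uv=e_{uv}\,w$ with $e_{uv}=uv(uv)^{-1}$. Unwinding the composition, $\big((f)\mathfrak{F}_{u}\big)\mathfrak{F}_{v}$ is conjugation of $f$ by $uv$, namely $(uv)f(uv)^{-1}=e_{uv}\,wfw^{-1}\,e_{uv}$; since $e_{uv}$ and the idempotent $wfw^{-1}\preccurlyeq e_{w}$ both lie in the commutative band $E(S)$, they commute, and the expression collapses to $e_{uv}\,wfw^{-1}=\big((1_{S})\mathfrak{F}_{u}\big)\mathfrak{F}_{v}\cdot(f)\mathfrak{F}_{u\ast v}$, which is $(v)$. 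No idea beyond ``$uv\preccurlyeq u\ast v$ and $E(S)$ commutative'' enters, but one must track the domains of the several idempotents with care. Item $(vi)$ then follows immediately: $\mathfrak{F}_{u}$ is order-preserving, so $g\leqslant e_{v}$ gives $(g)\mathfrak{F}_{u}\leqslant(e_{v})\mathfrak{F}_{u}=e_{uv}\leqslant e_{u\ast v}$, whence $f\cdot(g)\mathfrak{F}_{u}\leqslant(g)\mathfrak{F}_{u}\leqslant e_{u\ast v}$.
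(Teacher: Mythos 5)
The first thing to say is that the paper does not prove this lemma at all: it is quoted, with attribution, as Lemma~3 of McFadden and O'Carroll, so there is no internal proof to compare yours against. Your decision to reprove it from scratch therefore already goes beyond the paper; the argument you give is the standard one, and most of it is sound. Items $(i)$, $(iii)$, $(iv)$ and $(vi)$ are exactly the routine idempotent computations; for $(ii)$, transporting the group structure of $S/\mathfrak{C}_{\textsf{mg}}$ along the bijection $T_S\to S/\mathfrak{C}_{\textsf{mg}}$ is the right way to get associativity of $\ast$ for free; and the heart of $(v)$ is indeed nothing more than $uv\preccurlyeq u\ast v$ plus commutativity of $E(S)$. (A cosmetic point in $(ii)$: the clean way to see $1_S$ is maximal is $1_S=(1_S1_S^{-1})s=s$ from the characterisation $a\preccurlyeq b\iff a=(aa^{-1})b$; your chain ``$s=1_Ss=1_S$'' has the steps in a confusing order.)

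There is, however, one step in your proof of $(v)$ that fails as written. With maps acting on the right and $(f)\mathfrak{F}_t=tft^{-1}$, the composite $\bigl((f)\mathfrak{F}_u\bigr)\mathfrak{F}_v$ means: first form $ufu^{-1}$, then apply $\mathfrak{F}_v$, which gives $v(ufu^{-1})v^{-1}=(vu)f(vu)^{-1}$ --- conjugation by $vu$, not by $uv$ as you assert; likewise $\bigl((1_S)\mathfrak{F}_u\bigr)\mathfrak{F}_v=(vu)(vu)^{-1}=e_{vu}$, whereas the idempotent appearing in your collapse is $e_{uv}=\bigl((1_S)\mathfrak{F}_v\bigr)\mathfrak{F}_u$. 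The identity you actually establish, namely $e_{uv}\cdot(f)\mathfrak{F}_{u\ast v}=(uv)f(uv)^{-1}$, is true and is precisely the cocycle condition that makes the multiplication $\circ$ of Theorem~4.3 associative; but it reads $\bigl((1_S)\mathfrak{F}_v\bigr)\mathfrak{F}_u\cdot(f)\mathfrak{F}_{u\ast v}=\bigl((f)\mathfrak{F}_v\bigr)\mathfrak{F}_u$, i.e.\ with the two conjugations composed in the opposite order to the display in $(v)$. Under a literal reading of the stated conventions, the displayed $(v)$ would instead require $v\ast u=t_{vu}$ in the middle factor, so either the composites in $(v)$ are to be understood as ``$\mathfrak{F}_v$ first, then $\mathfrak{F}_u$'' or the transcription of the source has the factors reversed. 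Either way, you should not bridge the discrepancy by declaring $\bigl((f)\mathfrak{F}_u\bigr)\mathfrak{F}_v$ to be $(uv)f(uv)^{-1}$: state explicitly which composition convention you are using and prove the version of $(v)$ consistent with it.
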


\begin{theorem}[{\cite[Theorem~3]{McFadden-O'Carroll-1971}}]\label{theorem-4.3}
Let $S$ be an $F$-inverse semigroup and let $\mathscr{S}=\bigcup_{t\in T_S}\left({\downarrow}e_t\times\{t\}\right)$. Define the multiplication $\circ$ on $\mathscr{S}$ as follows. If $u,v\in T_S$ then for idempotents $f\leqslant e_u$ and $g\leqslant e_v$ put
\begin{equation}\label{eq-circ}
    (f,u)\circ(g,v)=\left(f\cdot(g)\mathfrak{F}_u,u\ast v\right).
\end{equation}
Then  $\circ$  is a well-defined multiplication on $\mathscr{S}$ and $\left(\mathscr{S},\circ\right)$ is isomorphic to $S$ under the map $\mathfrak{H}\colon S\rightarrow \mathscr{S}$, $s\mapsto\left(ss^{-1},t_s\right)$.
\end{theorem}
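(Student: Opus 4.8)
The final statement is the McFadden--O'Carroll description of an arbitrary $F$-inverse semigroup, so a proof proposal here amounts to recalling their argument. The plan is to show directly that the map $\mathfrak{H}\colon S\to\mathscr{S}$, $s\mapsto(ss^{-1},t_s)$, is a bijection which carries the multiplication of $S$ onto the operation $\circ$; once this is done, the fact that $(\mathscr{S},\circ)$ is a semigroup isomorphic to $S$ --- in particular that $\circ$ is associative, has $(1_S,1_S)$ for an identity and admits inverses --- is automatic, so I would not check those properties by hand. The only tools needed beyond Lemma~\ref{lemma-4.2} are the standard characterisation of the natural partial order in an inverse semigroup (Lemma~1.4.6 of \cite{Lawson-1998}), namely $s\preccurlyeq t$ iff $s=(ss^{-1})t=t(s^{-1}s)$; the commutativity of $E(S)$, so that a product of idempotents is again an idempotent; and the facts that $\mathfrak{C}_{\textsf{mg}}$ is a congruence and that, by the $F$-inverse hypothesis, every $\mathfrak{C}_{\textsf{mg}}$-class has a greatest element $t_s$.

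First I would check that $\circ$ maps $\mathscr{S}\times\mathscr{S}$ into $\mathscr{S}$. If $u,v\in T_S$, $f\leqslant e_u$ and $g\leqslant e_v$, then $(g)\mathfrak{F}_u=ugu^{-1}$ is an idempotent by Lemma~\ref{lemma-4.2}$(iii)$, hence so is $f\cdot(g)\mathfrak{F}_u$, and Lemma~\ref{lemma-4.2}$(vi)$ gives $f\cdot(g)\mathfrak{F}_u\leqslant e_{u\ast v}$; thus $(f,u)\circ(g,v)\in{\downarrow}e_{u\ast v}\times\{u\ast v\}\subseteq\mathscr{S}$. Next, $\mathfrak{H}$ is well defined, since $s\preccurlyeq t_s$ yields $ss^{-1}\leqslant t_st_s^{-1}=e_{t_s}$ and $ss^{-1}\in E(S)$, so $(ss^{-1},t_s)\in\mathscr{S}$. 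It is injective: if $ss^{-1}=rr^{-1}$ and $t_s=t_r$, then $s=(ss^{-1})t_s=(rr^{-1})t_r=r$. It is onto: given $(f,t)\in\mathscr{S}$, set $s=ft$; from $f\leqslant e_t$ one checks that $t^{-1}ft$ is an idempotent and $s=t(t^{-1}ft)$, so $s\preccurlyeq t$, and since $t$ is the greatest element of its $\mathfrak{C}_{\textsf{mg}}$-class we get $t_s=t$, while $ss^{-1}=ftt^{-1}f=fe_tf=f$; hence $\mathfrak{H}(s)=(f,t)$.

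The crux --- and the step I expect to require the most care --- is that $\mathfrak{H}$ is multiplicative: $\mathfrak{H}(sr)=\mathfrak{H}(s)\circ\mathfrak{H}(r)$ for all $s,r\in S$. For the $T_S$-coordinate, $s\,\mathfrak{C}_{\textsf{mg}}\,t_s$ and $r\,\mathfrak{C}_{\textsf{mg}}\,t_r$ together with the fact that $\mathfrak{C}_{\textsf{mg}}$ is a congruence give $sr\,\mathfrak{C}_{\textsf{mg}}\,t_st_r$, so $sr$ and $t_st_r$ lie in the same $\mathfrak{C}_{\textsf{mg}}$-class and hence $t_{sr}=t_{t_st_r}=t_s\ast t_r$. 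For the idempotent coordinate, using $s=(ss^{-1})t_s$ and $s^{-1}=t_s^{-1}(ss^{-1})$ I would compute
\begin{align*}
(sr)(sr)^{-1}&=s(rr^{-1})s^{-1}=(ss^{-1})\bigl(t_s(rr^{-1})t_s^{-1}\bigr)(ss^{-1})\\
&=(ss^{-1})(ss^{-1})\bigl(t_s(rr^{-1})t_s^{-1}\bigr)=(ss^{-1})\cdot(rr^{-1})\mathfrak{F}_{t_s},
\end{align*}
where the third equality uses that $t_s(rr^{-1})t_s^{-1}=(rr^{-1})\mathfrak{F}_{t_s}$ is an idempotent (Lemma~\ref{lemma-4.2}$(iii)$) and that idempotents commute in $S$. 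Since $\mathfrak{H}(s)\circ\mathfrak{H}(r)=\bigl((ss^{-1})\cdot(rr^{-1})\mathfrak{F}_{t_s},\ t_s\ast t_r\bigr)$, this proves $\mathfrak{H}$ is a homomorphism; being also a bijection, it is an isomorphism of $S$ onto $(\mathscr{S},\circ)$, and all remaining assertions follow. The delicate points are precisely the two already flagged: keeping the relevant products of idempotents below $e_{u\ast v}$ (for which Lemma~\ref{lemma-4.2}$(vi)$ is exactly designed), and exploiting that $\mathfrak{C}_{\textsf{mg}}$ is a congruence so that the assignment $s\mapsto t_s$ respects products modulo the operation $\ast$.
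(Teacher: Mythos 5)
The paper does not prove Theorem~\ref{theorem-4.3}; it imports it verbatim from \cite[Theorem~3]{McFadden-O'Carroll-1971}, so there is no in-paper argument to compare against. Your reconstruction is correct and complete: the well-definedness of $\circ$ via Lemma~\ref{lemma-4.2}$(iii)$ and $(vi)$, the bijectivity of $\mathfrak{H}$ via the characterisation $s\preccurlyeq t\Leftrightarrow s=(ss^{-1})t$, and the computation $(sr)(sr)^{-1}=(ss^{-1})\cdot(rr^{-1})\mathfrak{F}_{t_s}$ together with $t_{sr}=t_s\ast t_r$ (from $\mathfrak{C}_{\textsf{mg}}$ being a congruence) are exactly the ingredients of the original McFadden--O'Carroll proof, and your observation that associativity of $\circ$ is then automatic by transport of structure is sound.
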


Let $A$ and $B$ be semigroups, $\operatorname{\textsf{End}}(B)$ be the semigroup of endomorphisms of $B$ and the following homomorphism $\mathfrak{h}\colon A\rightarrow \operatorname{\textsf{End}}(B)\colon b\mapsto \mathfrak{h}_b$ is defined. Then the set $A\times B$ with the semigroup operation
\begin{equation*}
    (a_1,b_1)\cdot(a_2,b_2)=\left(a_1a_2,(b_1)\mathfrak{h}_{a_2}b_2\right), \qquad a_1,a_2\in A, \; b_1,b_2\in B,
\end{equation*}
is called the \emph{semidirect product} of the semigroup $A$ by $B$ with the respect to the homomorphism $\mathfrak{h}$ and it is denoted by $A\ltimes_\mathfrak{h}B$ \cite{Lawson-1998}. In this case we say that the right action of the semigroup $A$ is defined on the semigroup of endomorphisms of $B$. We remark that a semidirect product of two inverse semigroups is not need an inverse semigroup (see \cite[Section~5.3]{Lawson-1998}).

\begin{lemma}\label{lemma-4.4}
The map $\mathfrak{h}\colon H(\mathbb{I})\rightarrow\operatorname{\textsf{End}}\left(E(\mathbf{I}\mathbb{N}_{\infty}^n)\right)$, $\sigma\mapsto\mathfrak{h}_\sigma$, where  $(\alpha)\mathfrak{h}_\sigma=\sigma^{-1}\alpha\sigma$ is the automorphism of the semilattice $E(\mathbf{I}\mathbb{N}_{\infty}^n)$, is a homomorphism, and moreover $\mathfrak{h}_\mathbb{I}$ is the identity automorphism of $E(\mathbf{I}\mathbb{N}_{\infty}^n)$.
\end{lemma}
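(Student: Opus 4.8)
The plan is to verify directly that the assignment $\sigma\mapsto\mathfrak{h}_\sigma$ does what the statement claims, proceeding in three stages: first that each $\mathfrak{h}_\sigma$ is a well-defined self-map of $E(\mathbf{I}\mathbb{N}_{\infty}^n)$, then that it is in fact an automorphism of the semilattice, and finally that $\sigma\mapsto\mathfrak{h}_\sigma$ is a homomorphism $H(\mathbb{I})\to\operatorname{\textsf{End}}(E(\mathbf{I}\mathbb{N}_{\infty}^n))$ with $\mathfrak{h}_{\mathbb{I}}=\operatorname{id}$.

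For the first stage, fix $\sigma\in H(\mathbb{I})$ and $\varepsilon\in E(\mathbf{I}\mathbb{N}_{\infty}^n)$. Since $\sigma$ is a unit, $\sigma^{-1}=\sigma\inv$ in the usual inverse-semigroup sense, so $(\varepsilon)\mathfrak{h}_\sigma=\sigma^{-1}\varepsilon\sigma$ is a conjugate of an idempotent by a unit, hence again an idempotent; concretely, by Proposition~\ref{proposition-3.3}$(i)$, $\varepsilon$ is the restriction of $\mathbb{I}$ to $\operatorname{dom}\varepsilon$, and $\sigma^{-1}\varepsilon\sigma$ is the restriction of $\mathbb{I}$ to $(\operatorname{dom}\varepsilon)\sigma^{-1}$, so it lies in $E(\mathbf{I}\mathbb{N}_{\infty}^n)$. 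For the second stage, $\mathfrak{h}_\sigma$ preserves the operation $\cup$ (equivalently, multiplication of idempotents): $\sigma^{-1}(\varepsilon\iota)\sigma=\sigma^{-1}\varepsilon\sigma\sigma^{-1}\iota\sigma=((\varepsilon)\mathfrak{h}_\sigma)((\iota)\mathfrak{h}_\sigma)$, using $\sigma\sigma^{-1}=\mathbb{I}$. Injectivity and surjectivity follow because $\mathfrak{h}_{\sigma^{-1}}$ is a two-sided inverse of $\mathfrak{h}_\sigma$: $(\varepsilon)\mathfrak{h}_\sigma\mathfrak{h}_{\sigma^{-1}}=\sigma(\sigma^{-1}\varepsilon\sigma)\sigma^{-1}=\varepsilon$, and symmetrically on the other side. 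Hence $\mathfrak{h}_\sigma$ is an automorphism of the semilattice $E(\mathbf{I}\mathbb{N}_{\infty}^n)$.

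For the third stage, take $\sigma,\tau\in H(\mathbb{I})$. Because the action is written on the right in the definition of the semidirect product, I must check that $\mathfrak{h}$ is an (anti- or plain, depending on the convention fixed by \eqref{eq-circ}) homomorphism agreeing with that convention: for $\varepsilon\in E(\mathbf{I}\mathbb{N}_{\infty}^n)$,
\begin{equation*}
(\varepsilon)\mathfrak{h}_{\sigma\tau}=(\sigma\tau)^{-1}\varepsilon(\sigma\tau)=\tau^{-1}\sigma^{-1}\varepsilon\sigma\tau=\tau^{-1}\bigl((\varepsilon)\mathfrak{h}_\sigma\bigr)\tau=\bigl((\varepsilon)\mathfrak{h}_\sigma\bigr)\mathfrak{h}_\tau,
\end{equation*}
so $\mathfrak{h}_{\sigma\tau}=\mathfrak{h}_\sigma\mathfrak{h}_\tau$ as maps composed left-to-right, which is exactly the homomorphism property needed for the right action $\mathfrak{h}\colon H(\mathbb{I})\to\operatorname{\textsf{End}}(E(\mathbf{I}\mathbb{N}_{\infty}^n))$. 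Finally $(\varepsilon)\mathfrak{h}_{\mathbb{I}}=\mathbb{I}^{-1}\varepsilon\mathbb{I}=\varepsilon$, so $\mathfrak{h}_{\mathbb{I}}$ is the identity automorphism.

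I expect no serious obstacle here; the only point requiring care is bookkeeping of which side the action acts on, so that the formula $\mathfrak{h}_{\sigma\tau}=\mathfrak{h}_\sigma\mathfrak{h}_\tau$ (rather than $\mathfrak{h}_\tau\mathfrak{h}_\sigma$) is the one that matches the convention in Theorem~\ref{theorem-4.3} and \eqref{eq-circ}; the identity $\sigma\sigma^{-1}=\sigma^{-1}\sigma=\mathbb{I}$, valid precisely because $\sigma$ is a unit of $\mathbf{I}\mathbb{N}_{\infty}^n$, is what makes every cancellation above legitimate, and is the sole place where membership of $\sigma$ in $H(\mathbb{I})$ (as opposed to an arbitrary element of $\mathbf{I}\mathbb{N}_{\infty}^n$) is used.
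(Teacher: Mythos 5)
Your proof is correct and follows essentially the same route as the paper: a direct computation using $\sigma\sigma^{-1}=\sigma^{-1}\sigma=\mathbb{I}$ to verify multiplicativity of each $\mathfrak{h}_\sigma$, bijectivity (the paper checks injectivity and surjectivity separately where you exhibit $\mathfrak{h}_{\sigma^{-1}}$ as a two-sided inverse, but this is the same idea), and the composition law $\mathfrak{h}_{\sigma\tau}=\mathfrak{h}_\sigma\mathfrak{h}_\tau$. Your explicit check that $\sigma^{-1}\varepsilon\sigma$ is again an idempotent is a small point the paper leaves implicit; otherwise the arguments coincide.
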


\begin{proof}
For any $\sigma\in H(\mathbb{I})$, $\varepsilon,\iota\in E(\mathbf{I}\mathbb{N}_{\infty}^n)$ we have that
\begin{equation*}
    (\varepsilon\iota)\mathfrak{h}_\sigma=\sigma^{-1}\varepsilon\iota\sigma=\sigma^{-1}\varepsilon\sigma\sigma^{-1}\iota\sigma= (\varepsilon)\mathfrak{h}_\sigma(\iota)\mathfrak{h}_\sigma,
\end{equation*}
and hence $\mathfrak{h}_\sigma$ is an endomorphism of the semilattice $E(\mathbf{I}\mathbb{N}_{\infty}^n)$. Also, since for any $\sigma\in H(\mathbb{I})$ and $\varepsilon\in E(\mathbf{I}\mathbb{N}_{\infty}^n)$ the element $\sigma\varepsilon\sigma^{-1}$ is an idempotent of the semigroup $\mathbf{I}\mathbb{N}_{\infty}^n$ and $(\sigma\varepsilon\sigma^{-1})\mathfrak{h}_\sigma=\sigma^{-1}(\sigma\varepsilon\sigma^{-1})\sigma=\varepsilon$, the homomorphism $\mathfrak{h}_\sigma$ is a surjective map. It is obvious that $\mathfrak{h}_\mathbb{I}$ is the identity automorphism of the semilattice $E(\mathbf{I}\mathbb{N}_{\infty}^n)$.

\smallskip

Suppose that $(\varepsilon)\mathfrak{h}_\sigma=(\iota)\mathfrak{h}_\sigma$ for some $\gamma\in H(\mathbb{I})$ and $\varepsilon,\iota\in E(\mathbf{I}\mathbb{N}_{\infty}^n)$. Since $H(\mathbb{I})$ is the group of units of the semigroup $\mathbf{I}\mathbb{N}_{\infty}^n$, the following equalities
\begin{equation*}
    \sigma^{-1}\varepsilon\sigma=(\varepsilon)\mathfrak{h}_\sigma=(\iota)\mathfrak{h}_\sigma=\sigma^{-1}\iota\sigma
\end{equation*}
imply that
\begin{equation*}
    \varepsilon=1\varepsilon 1=\sigma\sigma^{-1}\varepsilon\sigma\sigma^{-1}=\sigma\sigma^{-1}\iota\sigma\sigma^{-1}=1\iota 1=\iota,
\end{equation*}
and hence $\mathfrak{h}_\sigma$ is an automorphism of the semilattice $E(\mathbf{I}\mathbb{N}_{\infty}^n)$.

\smallskip

Fix arbitrary $\sigma_1,\sigma_2\in H(\mathbb{I})$. Then for any idempotent $\varepsilon\in \mathbf{I}\mathbb{N}_{\infty}^n$ we have that
\begin{equation*}
    (\varepsilon)\mathfrak{h}_{\sigma_1\sigma_2}= (\sigma_1\sigma_2)^{-1}\varepsilon\sigma_1\sigma_2= \sigma_2^{-1}\sigma_1^{-1}\varepsilon\sigma_1\sigma_2= \sigma_2^{-1}(\varepsilon)\mathfrak{h}_{\sigma_1}\sigma_2= \left((\varepsilon)\mathfrak{h}_{\sigma_1}\right)\mathfrak{h}_{\sigma_2}= (\varepsilon)\left(\mathfrak{h}_{\sigma_1}\cdot\mathfrak{h}_{\sigma_2}\right),
\end{equation*}
and hence so defined map $\mathfrak{h}\colon H(\mathbb{I})\rightarrow\operatorname{\textsf{End}}\left(E(\mathbf{I}\mathbb{N}_{\infty}^n)\right)$ is a homomorphism.
\end{proof}

The following theorem describes the structure of the semigroup $\mathbf{I}\mathbb{N}_{\infty}^n$.

\begin{theorem}\label{theorem-4.5}
Let $n$ be any positive integer $\geqslant 2$. Then the semigroup $\mathbf{I}\mathbb{N}_{\infty}^n$ is isomorphic to the semidirect product ${\mathscr{S}_n\ltimes_\mathfrak{h}(\mathscr{P}_{\infty}(\mathbb{N}^n),\cup)}$ of free semilattice with the unit  $(\mathscr{P}_{\infty}(\mathbb{N}^n),\cup)$ by the symmetric group $\mathscr{S}_n$.
\end{theorem}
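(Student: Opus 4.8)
The plan is to combine the canonical factorization of Lemma~\ref{lemma-3.5} with the conjugation homomorphism $\mathfrak{h}$ of Lemma~\ref{lemma-4.4} to build an explicit isomorphism of $\mathbf{I}\mathbb{N}_{\infty}^n$ onto the ``concrete'' semidirect product $H(\mathbb{I})\ltimes_{\mathfrak{h}}E(\mathbf{I}\mathbb{N}_{\infty}^n)$, and then to transport it along the isomorphisms $H(\mathbb{I})\cong\mathscr{S}_n$ of Theorem~\ref{theorem-3.2} and $E(\mathbf{I}\mathbb{N}_{\infty}^n)\cong(\mathscr{P}_{\infty}(\mathbb{N}^n),\cup)$ of Proposition~\ref{proposition-3.3}$(iii)$. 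Since $\mathfrak{h}\colon H(\mathbb{I})\to\operatorname{\textsf{End}}\bigl(E(\mathbf{I}\mathbb{N}_{\infty}^n)\bigr)$ is a homomorphism by Lemma~\ref{lemma-4.4}, the set $H(\mathbb{I})\times E(\mathbf{I}\mathbb{N}_{\infty}^n)$ with the product
\begin{equation*}
(\sigma_{1},\varepsilon_{1})(\sigma_{2},\varepsilon_{2})=\bigl(\sigma_{1}\sigma_{2},\,(\varepsilon_{1})\mathfrak{h}_{\sigma_{2}}\,\varepsilon_{2}\bigr)
\end{equation*}
is a well-defined semigroup, which I will denote $H(\mathbb{I})\ltimes_{\mathfrak{h}}E(\mathbf{I}\mathbb{N}_{\infty}^n)$.

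First I would define $\Phi\colon\mathbf{I}\mathbb{N}_{\infty}^n\to H(\mathbb{I})\ltimes_{\mathfrak{h}}E(\mathbf{I}\mathbb{N}_{\infty}^n)$ by $\alpha\mapsto(\sigma_{\alpha},\varepsilon_{l(\alpha)})$, where $\alpha=\sigma_{\alpha}\varepsilon_{l(\alpha)}$, $\varepsilon_{l(\alpha)}=\alpha^{-1}\alpha$, is the factorization of Lemma~\ref{lemma-3.5}. The uniqueness clause there makes $\Phi$ well defined and injective; it is surjective because, given $(\sigma,\varepsilon)$, the element $\alpha:=\sigma\varepsilon$ has $\varepsilon_{l(\alpha)}=(\sigma\varepsilon)^{-1}\sigma\varepsilon=\varepsilon\sigma^{-1}\sigma\varepsilon=\varepsilon$ and then $\sigma_{\alpha}=\sigma$ by uniqueness, so $\Phi(\alpha)=(\sigma,\varepsilon)$.

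The heart of the proof is multiplicativity. For $\alpha,\beta\in\mathbf{I}\mathbb{N}_{\infty}^n$ I would rewrite
\begin{equation*}
\alpha\beta=\sigma_{\alpha}\varepsilon_{l(\alpha)}\sigma_{\beta}\varepsilon_{l(\beta)}= (\sigma_{\alpha}\sigma_{\beta})\bigl(\sigma_{\beta}^{-1}\varepsilon_{l(\alpha)}\sigma_{\beta}\bigr)\varepsilon_{l(\beta)}= (\sigma_{\alpha}\sigma_{\beta})\cdot\bigl((\varepsilon_{l(\alpha)})\mathfrak{h}_{\sigma_{\beta}}\cdot\varepsilon_{l(\beta)}\bigr),
\end{equation*}
where $\sigma_{\alpha}\sigma_{\beta}\in H(\mathbb{I})$ and, since $\mathfrak{h}_{\sigma_{\beta}}$ is an endomorphism of the semilattice $E(\mathbf{I}\mathbb{N}_{\infty}^n)$ (Lemma~\ref{lemma-4.4}), the factor $(\varepsilon_{l(\alpha)})\mathfrak{h}_{\sigma_{\beta}}\cdot\varepsilon_{l(\beta)}$ is an idempotent. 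Thus this display exhibits $\alpha\beta$ as (unit)$\cdot$(idempotent) — the factorization of Lemma~\ref{lemma-3.5} — so by uniqueness $\sigma_{\alpha\beta}=\sigma_{\alpha}\sigma_{\beta}$ and $\varepsilon_{l(\alpha\beta)}=(\varepsilon_{l(\alpha)})\mathfrak{h}_{\sigma_{\beta}}\varepsilon_{l(\beta)}$, i.e.\ $\Phi(\alpha\beta)=\Phi(\alpha)\Phi(\beta)$. Hence $\Phi$ is an isomorphism.

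Finally I would transport: with $\theta\colon\mathscr{S}_n\to H(\mathbb{I})$ from Theorem~\ref{theorem-3.2} and $h\colon\varepsilon\mapsto\mathbb{N}^n\setminus\operatorname{dom}\varepsilon$ from Proposition~\ref{proposition-3.3}$(iii)$, the assignment $(\sigma,\varepsilon)\mapsto\bigl(\theta^{-1}(\sigma),(\varepsilon)h\bigr)$ is an isomorphism of $H(\mathbb{I})\ltimes_{\mathfrak{h}}E(\mathbf{I}\mathbb{N}_{\infty}^n)$ onto $\mathscr{S}_n\ltimes_{\mathfrak{h}}(\mathscr{P}_{\infty}(\mathbb{N}^n),\cup)$, the homomorphism on $\mathscr{S}_n$ being $\mathfrak{h}$ read through $\theta$ and $h$ (concretely, $\pi\in\mathscr{S}_n$ acts on a finite subset of $\mathbb{N}^n$ by the induced isometry $\alpha_{\pi}$ of Proposition~\ref{proposition-2.1}); composing with $\Phi$ finishes the proof. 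I expect the main obstacle to be bookkeeping rather than conceptual difficulty: one must check that the conjugating permutation in the rewriting of $\alpha\beta$ is $\sigma_{\beta}$, not $\sigma_{\alpha}$; that the ordering (unit coordinate first, idempotent coordinate second) matches the paper's convention for $\ltimes_{\mathfrak{h}}$; and that the uniqueness part of Lemma~\ref{lemma-3.5} genuinely applies to the product. As an alternative one could instead feed Corollary~\ref{corollary-4.1} into Theorem~\ref{theorem-4.3} to realize $\mathbf{I}\mathbb{N}_{\infty}^n$ as $(\mathscr{S},\circ)$ with $\mathscr{S}=E(\mathbf{I}\mathbb{N}_{\infty}^n)\times H(\mathbb{I})$ — here $e_{t}=\mathbb{I}$ for every $t\in T_{\mathbf{I}\mathbb{N}_{\infty}^n}=H(\mathbb{I})$, so ${\downarrow}e_{t}=E(\mathbf{I}\mathbb{N}_{\infty}^n)$; since $(g)\mathfrak{F}_{u}=ugu^{-1}$ and $u\ast v=uv$ by Lemma~\ref{lemma-4.2}, the bijection $(f,u)\mapsto(u,u^{-1}fu)$ identifies $(\mathscr{S},\circ)$ with $H(\mathbb{I})\ltimes_{\mathfrak{h}}E(\mathbf{I}\mathbb{N}_{\infty}^n)$.
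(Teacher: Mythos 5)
Your proof is correct and takes essentially the same route as the paper: both send $\alpha$ to $\left(\sigma_{\alpha},\alpha^{-1}\alpha\right)$ and verify that this is an isomorphism onto $H(\mathbb{I})\ltimes_{\mathfrak{h}}E(\mathbf{I}\mathbb{N}_{\infty}^n)$, then identify the factors via Theorem~\ref{theorem-3.2} and Proposition~\ref{proposition-3.3}$(iii)$. The only cosmetic difference is that you deduce multiplicativity from the uniqueness of the factorization in Lemma~\ref{lemma-3.5}, whereas the paper verifies directly that $\sigma_{\beta}^{-1}\alpha^{-1}\alpha\sigma_{\beta}\beta^{-1}\beta=(\alpha\beta)^{-1}(\alpha\beta)$; both are valid.
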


\begin{proof}
Since by Theorem~\ref{theorem-3.2} the group of units $H(\mathbb{I})$ of the semigroup $\mathbf{I}\mathbb{N}_{\infty}^n$ is isomorphic to the symmetric group $\mathscr{S}_n$, it is sufficient to show that the semigroup $\mathbf{I}\mathbb{N}_{\infty}^n$ is isomorphic to the semidirect product $H(\mathbb{I})\ltimes_\mathfrak{h}E(\mathbf{I}\mathbb{N}_{\infty}^n)$ the semilattice $E(\mathbf{I}\mathbb{N}_{\infty}^n)$ by the group of units $H(\mathbb{I})$ of $\mathbf{I}\mathbb{N}_{\infty}^n$ with the respect to the homomorphism $\mathfrak{h}\colon H(\mathbb{I})\rightarrow\operatorname{\textsf{End}}\left(E(\mathbf{I}\mathbb{N}_{\infty}^n)\right)$, $\sigma\mapsto\mathfrak{h}_\sigma$, where $(\varepsilon)\mathfrak{h}_\sigma=\sigma^{-1}\varepsilon\sigma$, $\varepsilon\in E(\mathbf{I}\mathbb{N}_{\infty}^n)$.

\smallskip

We define a map $\mathfrak{T}\colon \mathbf{I}\mathbb{N}_{\infty}^n \to H(\mathbb{I})\ltimes_\mathfrak{h} E(\mathbf{I}\mathbb{N}_{\infty}^n)$ in the following way
\begin{equation*}
    (\alpha)\mathfrak{T}=\left(\sigma_\alpha,\alpha^{-1}\alpha\right),
\end{equation*}
where the element $\sigma_\alpha$ of the group of units $H(\mathbb{I})$ of $\mathbf{I}\mathbb{N}_{\infty}^n$ is defined by formula $\alpha\mapsto \sigma_\alpha$ in Theorem~\ref{theorem-3.7}. By Lemma~\ref{lemma-3.5} and Theorem~\ref{theorem-3.7} the map $\mathfrak{T}\colon \mathbf{I}\mathbb{N}_{\infty}^n \to H(\mathbb{I})\ltimes_\mathfrak{h} E(\mathbf{I}\mathbb{N}_{\infty}^n)$ is well defined and by Theorem~\ref{theorem-3.8}$(iv)$ it is surjective. Suppose there exist $\alpha,\beta\in \mathbf{I}\mathbb{N}_{\infty}^n$ such that $(\alpha)\mathfrak{T}=(\beta)\mathfrak{T}$. Then $\left(\sigma_\alpha,\alpha^{-1}\alpha\right)=\left(\sigma_\beta,\beta^{-1}\beta\right)$ and by Thejrem~\ref{theorem-3.6} we get that
\begin{equation*}
    \alpha=\sigma_\alpha\alpha^{-1}\alpha=\sigma_\beta\beta^{-1}\beta=\beta,
\end{equation*}
and hence the map $\mathfrak{T}\colon \mathbf{I}\mathbb{N}_{\infty}^n \to H(\mathbb{I})\ltimes_\mathfrak{h} E(\mathbf{I}\mathbb{N}_{\infty}^n)$ is injective.

\smallskip

Fix arbitrary $\alpha,\beta\in \mathbf{I}\mathbb{N}_{\infty}^n$. Then Lemma~\ref{lemma-3.5} and Theorem~\ref{theorem-3.7} imply that $\alpha\beta\mathfrak{C}_{\textsf{mg}}\gamma_\alpha\gamma_\beta$, and since $\sigma_\alpha,\sigma_\beta\in H(\mathbb{I})$, we get that $\sigma_\alpha\sigma_\beta=\sigma_{\alpha\beta}$. This implies that
\begin{equation*}
    (\alpha)\mathfrak{T}(\beta)\mathfrak{T}= \left(\sigma_\alpha,\alpha^{-1}\alpha\right)\left(\sigma_\beta,\beta^{-1}\beta\right)= \left(\sigma_\alpha\sigma_\beta,\sigma_\beta^{-1}\alpha^{-1}\alpha\sigma_\beta\beta^{-1}\beta\right)= \left(\sigma_{\alpha\beta},\sigma_\beta^{-1}\alpha^{-1}\alpha\sigma_\beta\beta^{-1}\beta\right).
\end{equation*}
By Lemma~\ref{lemma-4.4} the map $\mathfrak{h}_\sigma\colon E(\mathbf{I}\mathbb{N}_{\infty}^n)\rightarrow E(\mathbf{I}\mathbb{N}_{\infty}^n)\colon\alpha\mapsto\sigma^{-1}\alpha\sigma$ is an automorphism of the semilattice  $E(\mathbf{I}\mathbb{N}_{\infty}^n)$, and hence we get that $\sigma_\beta^{-1}\alpha^{-1}\alpha\sigma_\beta$ is an idempotent of the semigroup  $\mathbf{I}\mathbb{N}_{\infty}^n$. By Lemma~\ref{lemma-3.5} and Theorem~\ref{theorem-3.6} for any $\alpha\in \mathbf{I}\mathbb{N}_{\infty}^n$ there exists the unique element $\sigma_\alpha$ of the group of units $H(\mathbb{I})$ such that  $\alpha\preccurlyeq\sigma_\alpha$. Lemma~1.4.6 from \cite{Lawson-1998} implies that $\beta=\sigma_\beta\beta^{-1}\beta$, and hence
\begin{equation*}
\begin{split}
  \sigma_\beta^{-1}\alpha^{-1}\alpha\sigma_\beta\beta^{-1}\beta & = \left(\sigma_\beta^{-1}\alpha^{-1}\alpha\sigma_\beta\right)\left(\beta^{-1}\beta\right)\left(\beta^{-1}\beta\right)=\\
    & = \left(\beta^{-1}\beta\gamma_\beta^{-1}\right)\left(\alpha^{-1}\alpha\right)\left(\sigma_\beta\beta^{-1}\beta\right)=\\
    & = \left(\sigma_\beta\beta^{-1}\beta\right)^{-1}\left(\alpha^{-1}\alpha\right)\left(\sigma_\beta\beta^{-1}\beta\right)=\\
    & = \beta^{-1}\left(\alpha^{-1}\alpha\right)\beta=\\
    & = \left(\beta^{-1}\alpha^{-1}\right)\left(\alpha\beta\right)=\\
    & = \left(\alpha\beta\right)^{-1}\left(\alpha\beta\right).
\end{split}
\end{equation*}
Therefore, we have that
\begin{equation*}
    (\alpha\beta)\mathfrak{T}=\left(\sigma_{\alpha\beta},(\alpha\beta)^{-1}\alpha\beta\right)=(\alpha)\mathfrak{T}(\beta)\mathfrak{T},
\end{equation*}
and hence the map $\mathfrak{T}\colon \mathbf{I}\mathbb{N}_{\infty}^n \to H(\mathbb{I})\ltimes_\mathfrak{h} E(\mathbf{I}\mathbb{N}_{\infty}^n)$ is a homomorphism, which completes the proof of the theorem.
\end{proof}

\begin{remark}
The monoid  $\mathbf{I}\mathbb{N}_{\infty}$ of all partial cofinite isometries of the set of positive integers $\mathbb{N}$ is a submonoid of the monoid $\mathscr{I}_{\infty}^{\!\nearrow}(\mathbb{N})$ of cofinite, monotone, non-decreasing, injective partial transformations of $\mathbb{N}$ \cite{Gutik-Savchuk-2019}. The structure of the semigroup is described in \cite{Gutik-Repovs-2011}. Also, Lemma~\ref{lemma-3.5} implies that for any positive integer $n\geqslant 2$ the semigroup $\mathbf{I}\mathbb{N}_{\infty}^n$ is an inverse submonoid of the monoid $\mathscr{P\!O}\!_{\infty}(\mathbb{N}^n_{\leqslant})$ of monotone injective partial selfmaps of $\mathbb{N}^{n}_{\leqslant}$ with the order product having cofinite domain and image \cite{Gutik-Krokhmalna-2019, Gutik-Pozdniakova-2016, Gutik-Pozdniakova-2016a}.
\end{remark}
\section*{Acknowledgements}

The authors acknowledge Alex Ravsky for his comments and
suggestions.


\begin{thebibliography}{11}

\bibitem{Bezushchak-2004}
O. Bezushchak,
\emph{On growth of the inverse semigroup of partially defined co-finite automorphisms of integers},
Algebra Discrete Math. (2004), no.~2, 45--55.


\bibitem{Bezushchak-2008}
O. Bezushchak,
\emph{Green's relations of the inverse semigroup of partially defined co-finite isometries of discrete line},
Visn., Ser. Fiz.-Mat. Nauky, Kyiv. Univ. Im. Tarasa Shevchenka (2008), no.~1, 12--16.


\bibitem{Clifford-Preston-1961-1967}
A. H.~Clifford and  G. B.~Preston,
\emph{The Algebraic Theory of Semigroups},
Vol. I., Amer. Math. Soc. Surveys 7, Pro\-vidence, R.I., 1961; Vol. II., Amer. Math. Soc. Surveys 7, Providence, R.I., 1967.

\bibitem{Green-1951}
J. A. Green,
\emph{On the structure of semigroups},
Ann. Math. (2) \textbf{54} (1951), no.~1, 163--172.

\bibitem{Gutik-Krokhmalna-2019}
O. Gutik and O. Krokhmalna,
\emph{The monoid of monotone injective partial selfmaps of the poset $(\mathbb{N}^n,\leq)$ with cofinite domains and images},
Preprint.

\bibitem{Gutik-Pozdniakova-2016}
O. Gutik and I. Pozdniakova,
\emph{On the monoid of monotone injective partial selfmaps of $N^2_\leq$ with co-finite domains and images},
Visn. L'viv. Univ., Ser. Mekh.-Mat. \textbf{81} (2016), 100--116.



\bibitem{Gutik-Pozdniakova-2016a}
O. Gutik and I. Pozdniakova,
\emph{On the monoid of monotone injective partial selfmaps of $N^2_\leq$ with co-finite domains and images, II},
Visn. L'viv. Univ., Ser. Mekh.-Mat. \textbf{82} (2016), 109--127.


\bibitem{Gutik-Repovs-2011}
O.~Gutik and D.~Repov\v{s},
\emph{Topological monoids of monotone, injective partial selfmaps of $\mathbb{N}$ having cofinite domain and image},
Stud. Sci. Math. Hungar. \textbf{48} (2011), no.~3, 342--353.

\bibitem{Gutik-Repovs-2015}
\emph{Gutik O., Repov\v{s} D.}
On monoids of injective partial cofinite selfmaps,
Math. Slovaca \textbf{65} (2015), no.~5,  981--992.



\bibitem{Gutik-Savchuk-2017}
O.~Gutik and A.~Savchuk,
\emph{On the semigroup $\mathbf{ID}_{\infty}$,}
Visn. Lviv. Univ., Ser. Mekh.-Mat. \textbf{83} (2017), 5--19 (in Ukrainian).

\bibitem{Gutik-Savchuk-2018}
O.~Gutik and A.~Savchuk,
\emph{The semigroup of partial co-finite isometries of positive integers,}
 Bukovyn. Mat. Zh. \textbf{6} (2018), no. 1--2,  42--51 (in Ukrainian).

\bibitem{Gutik-Savchuk-2019}
O. Gutik and A. Savchuk,
\emph{On inverse submonoids of the monoid of almost monotone injective co-finite partial selfmaps of positive integers},
Preprint (arXiv:1904.11802).

\bibitem{Lawson-1998}
M.~Lawson,
\emph{Inverse Semigroups. The Theory of Partial Symmetries},
Singapore: World Scientific, 1998.

\bibitem{McFadden-O'Carroll-1971}
R. McFadden and L. O'Carroll,
\emph{F-inverse semigroups},
Proc. Lond. Math. Soc., III. Ser. \textbf{22} (1971), no.~4, 652--666.

\bibitem{Petrich-1984}
M.~Petrich,
\emph{Inverse Semigroups},
John Wiley $\&$ Sons, New York, 1984.

\bibitem{Saito-1965}
T. Sait\^{o},
\emph{Proper ordered inverse semigroups},
Pacif. J. Math. \textbf{15} (1965), no.~2, 649--666.


\bibitem{Wagner-1952}
V.~V. Wagner,
\textit{Generalized groups},
Dokl. Akad. Nauk SSSR \textbf{84} (1952), 1119--1122 (in Russian).

\end{thebibliography}
\end{document}